\newcommand{\folder}{../../HeaderLiteratur/}
\definecolor{darkblue}{rgb}{0,0,0.6}
\tikzset{
  dotted/.style={pattern=dots,pattern color=#1},
  dotted/.default=black
}
\tikzset{
  fdotted/.style={pattern=crosshatch dots,pattern color=#1},
  fdotted/.default=black
}
\tikzset{
  scopedlines/.style={pattern=north east lines,pattern color=#1},
  scopedlines/.default=black
}
\tikzset{
  hrlines/.style={pattern=horizontal lines,pattern color=#1},
  hrlines/.default=black
}
\newcommand{\bN}{{\mathbb N}}
\newcommand{\bP}{{\mathbb P}}
\newcommand{\bQ}{{\mathbb Q}}
\newcommand{\bR}{{\mathbb R}}
\newcommand{\bZ}{{\mathbb Z}}
\newcommand{\cB}{{\mathscr B}}
\newcommand{\cC}{{\mathscr C}}
\newcommand{\cF}{{\mathscr F}}
\newcommand{\cH}{{\mathscr H}}
\newcommand{\cK}{{\mathscr K}}
\newcommand{\cM}{{\mathscr M}}
\newcommand{\cX}{{\mathscr X}}
\newcommand{\dO}{{\mathcal O}}
\newcommand{\dW}{{\mathcal W}}
\renewcommand{\phi}{\varphi}
\DeclareMathOperator{\iso}{\cong}
\newcommand{\twolines}[2]{\genfrac {}{}{0pt}{}{#1}{#2}}
\DeclareMathOperator{\surj}{\twoheadrightarrow}
\DeclareMathOperator{\too}{\longrightarrow}
\DeclareMathOperator{\rank}{rk}
\DeclareMathOperator{\Pic}{Pic} 
\DeclareMathOperator{\Div}{Div} 
\DeclareMathOperator{\NS}{NS}
\DeclareMathOperator{\del}{\partial}
\DeclareMathOperator{\birKbar}{\overline{\cB\cK}}
\DeclareMathOperator{\MonHdg}{Mon^2_\text{Hdg}}
\renewcommand{\div}{{\rm div}}
\DeclareMathOperator{\Nef}{Nef}
\DeclareMathOperator{\Amp}{Amp}
\DeclareMathOperator{\half}{\frac{1}{2}}
\DeclareMathOperator{\LambdaKEn}{\Lambda_{{\rm K3}^{[n]}}}
\DeclareMathOperator{\Movbar}{\overline{Mov}}
\newcommand\restr[2]{{
  \left.\kern-\nulldelimiterspace 
  #1 
  \vphantom{\big|} 
  \right|_{#2} 
  }}
\newif\ifmyversion
\newcommand{\TODO}[1]{}
\newcommand{\Martin}[1]{}
\theoremstyle{plain}
\newtheorem{proposition}{Proposition}[section]
\newtheorem{lemma}[proposition]{Lemma}
\newtheorem{claim}[proposition]{Claim}
\newtheorem{corollary}[proposition]{Corollary}
\newtheorem{conjecture}[proposition]{Conjecture}
\newtheorem{theorem}[proposition]{Theorem}
\newtheorem*{theorem*}{Theorem}
\newtheorem*{conjecture*}{Conjecture}
\newtheorem*{proposition*}{Proposition}
\newtheorem*{corollary*}{Corollary}
\theoremstyle{definition}
\newtheorem{definition}[proposition]{Definition}
\newtheorem*{notation*}{Notation}
\newtheorem{remark}[proposition]{Remark}
\theoremstyle{remark}
\newenvironment{proofofclaim}{\begin{proof}[Proof of claim]}{ \end{proof}}
\newtheoremstyle{name}
   {}{}{\itshape}{}{\bfseries }{}{ }{\thmname{#3}.}
\theoremstyle{name}
\numberwithin{equation}{section}					
\begin{document}

\title[Base divisors of big and nef line bundles on ISV]{Base divisors of big and nef line bundles on
  irreducible symplectic varieties}
\author[U. Rie\ss]{Ulrike Rie\ss}
\address{ETH Z\"urich, Institute of theoretical studies, Clausisusstrasse 47, 8092 Z\"urich, Switzerland}
\email{ulrike.riess@eth-its.ethz.ch}

\maketitle

\begin{abstract}
  Under some conditions on the deformation type, which we expect to be satisfied for arbitrary irreducible
  symplectic varieties, we describe which big and nef line bundles on
  irreducible symplectic varieties have base divisors. In particular, we show that
  such base divisors are always irreducible and reduced.
  This is applied to understand the behaviour of divisorial base components of big and nef line bundles under
  deformations and for  K3$^{[n]}$-type and Kum$^n$-type.
\end{abstract}

\section*{Introduction}
Irreducible (holomorphic) symplectic varieties are a class of
varieties that appears naturally in the classification of algebraic varieties with trivial first Chern
class. The Beauville--Bogomolov decomposition theorem (\cite[Theorem 1]{Beauville1983}) states that up to a finite \'etale cover
every such variety can be decomposed into a product of three types of varieties: abelian varieties, irreducible
symplectic varieties and (strict) Calabi--Yau varieties. Since this is known, there has been intense research
on irreducible symplectic varieties.

Two-dimensional irreducible symplectic varieties are exactly the famous K3 surfaces. These surfaces have a
 rich geometry and are very well-studied. It turns out, that many results on K3 surfaces stem from
 more general phenomena for irreducible symplectic varieties.

In this article
we study base divisors of big and nef line bundles on irreducible symplectic varieties.

Starting point are the known results on K3 surfaces:
\begin{proposition*}[{\cite{Mayer72}}]
  Let $X$ be a K3 surface with a big and nef line bundle $H \in \Pic(X)$.
Then $H$ has base points if and only if $H=mE+C$, where $m\geq2$, $E$ is a smooth elliptic curve, and
$C$ is a smooth rational curve, such that $(E, C)=1$. In this case, the base locus of $H$ is exactly $C$.
\end{proposition*}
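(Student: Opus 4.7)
My plan is to treat the two directions separately.

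For the sufficiency, I would assume $H = mE + C$ with the stated properties and verify bigness and nefness by direct intersection computation ($H^2 = 2m - 2 \geq 2$, and $(H, D) \geq 0$ for any irreducible curve $D$ by splitting into the cases $D = C$ and $D \neq C$). To identify the base locus, the key point is that the pencil $|E|$ defines an elliptic fibration $\pi \colon X \to \bP^1$, so $|mE| = \pi^* |\dO_{\bP^1}(m)|$ is base-point free of projective dimension $m$. Multiplication by the section defining $C$ yields an injection $H^0(X, mE) \hookrightarrow H^0(X, H)$, and a dimension count via Riemann--Roch and Kawamata--Viehweg vanishing gives $h^0(X, H) = m + 1 = h^0(X, mE)$. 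Hence this inclusion is an equality, $|H| = |mE| + C$, and the base locus is exactly $C$.

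For the necessity, I would first invoke the classical Saint-Donat-type fact that on a K3 surface a big and nef line bundle whose linear system has no fixed component is automatically base-point free; this forces a non-trivial decomposition $|H| = |M| + F$ with $F > 0$. The first structural task is to show that every irreducible component of $F$ is a smooth rational $(-2)$-curve, which combines the rigidity of fixed components (forcing negative self-intersection via Hodge index, since $H$ is big) with the adjunction formula on the K3 surface $X$.

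The central step is to force $M^2 = 0$: by Saint-Donat's classification of linear systems on K3 surfaces this yields $M = mE$ for some smooth elliptic curve $E$ and $m \geq 1$. I would argue $M^2 = 0$ by contradiction: if $M^2 > 0$, then $M$ is itself big and nef, and the structural results on big base-point-free systems on K3 surfaces, combined with the nefness inequalities $(H, F_i) \geq 0$ and careful intersection bookkeeping against the negative-definite configuration generated by the components of $F$, rule out the persistence of a fixed component in $|H|$.

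Once $M = mE$ is established, I would write $F = \sum a_i C_i$ with the $C_i$ smooth rational $(-2)$-curves and $a_i > 0$, and determine $F$ exactly. The nefness relations $(H, C_i) = m(E, C_i) + (F, C_i) \geq 0$, the negative definiteness of the Gram matrix of $\{C_i\}$, and the strict positivity $(F, E) = (H, E) > 0$ (from Hodge index applied to $E^2 = 0$ and $H^2 > 0$) jointly force $F = C$ with $(E, C) = 1$; the bound $m \geq 2$ follows because $m = 1$ would give $H^2 = 0$, contradicting bigness. The main obstacle throughout is the vanishing $M^2 = 0$ of the moving part: excluding a big moving part that still carries a fixed component is the only genuinely non-routine step, and requires the full strength of the classification of linear systems on K3 surfaces together with careful intersection-theoretic control.
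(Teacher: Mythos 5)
The paper offers no proof of this proposition: it is quoted verbatim from Mayer (with Saint-Donat's variant cited for positive characteristic), so there is no in-paper argument to compare yours against. That said, your outline is the standard modern proof (as in Huybrechts' K3 book, whose Corollary 3.15 the author explicitly names as the model), and it is precisely the argument that the paper generalizes in Section \ref{sec:proofofmainthm}: your decomposition $H=M+F$, the observation that every component of $F$ is a rigid $(-2)$-curve, the forcing of $M^2=0$, and the final reduction of $F$ to a single reduced component correspond one-to-one to Lemma \ref{lem:FsquareNeg}, Lemma \ref{lem:Msquare0}, and the $H'=M+F_{i_0}$ step in the proof of Theorem \ref{thm:main-thm}. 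Your sufficiency direction is complete and correct as written.

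The one place where your plan is thinner than it should be is the step you yourself identify as central, namely $M^2=0$. "Structural results on big base-point-free systems" plus "intersection bookkeeping" is not yet an argument; the actual engine is numerical: since $F$ is the fixed part, $h^0(X,H)=h^0(X,M)$, and if $M^2>0$ then both $H$ and $M$ are big and nef with vanishing higher cohomology, so Riemann--Roch gives $2+\tfrac{1}{2}H^2=2+\tfrac{1}{2}M^2$, i.e.\ $H^2=M^2$, i.e.\ $2(M,F)+F^2=(M,F)+(H,F)=0$. Both summands are $\geq 0$ ($M$ moving, $H$ nef), so $(M,F)=(H,F)=0$ and hence $F^2=(H-M,F)=0$, which contradicts the rigidity of $F\neq 0$ (an effective divisor with non-negative square on a K3 moves). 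This is exactly the K3 shadow of Lemma \ref{lem:Msquare0}, and you should spell it out. A similar remark applies to the last step: negative definiteness and $(E,F)=1$ alone do not force $F$ irreducible and reduced; you need the same rigidity/Hodge-index mechanism to show $(F-C)^2\geq 0$ and hence $F-C=0$ after isolating the unique component $C$ with $(E,C)=1$. With those two computations made explicit, your proof is correct.
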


  Although this article only deals with the complex setting, let us mention that over arbitrary
  algebraically closed fields of characteristic $\neq 2$ a similar 
  result for ample line bundles was proved by Saint-Donat (see \cite[Proposition 8.1]{Saint-Donat74}).

Mayer's proposition implies that for every ample line bundle $H$ on a K3 surface, $2H$ is base
point free.
Remarkably, also for abelian varieties $A$, it is known that  $2H$ is base point free for every ample line
bundle $H\in \Pic(A)$ (see \cite{MumfordAV}).

These results can be seen as part of a general conjecture of Fujita which predicts that for smooth, projective
varieties  $\omega_X+(\dim X + 1) H $ is base point free for any ample line bundle $H$ (see
e.g.~\cite[Conjecture 10.4.1]{LazarsfeldII} for the full statement of Fujita's conjecture).
Note that for abelian varieties and for K3 surfaces, the bounds are even better than predicted by
Fujita's conjecture. This suggests, that it might be particularly interesting to study questions related
to base points of ample line bundles for irreducible symplectic varieties.

Let us mention that in general it is known, that for an ample line bundle $H$ on a smooth projective
variety $X$ of dimension 
$n$, the line bundle $\omega_X+ m H$
is globally generated for $m\geq \binom{n+1}{2}$ (see \cite{Angehrn-Siu}). 

\bigskip

In this article, we investigate the divisorial part of the base locus of big and nef line bundles on
 irreducible symplectic varieties. The main theorem is

\begin{theorem*}[{compare Theorem \ref{thm:main-thm}}]
    Fix an irreducible symplectic variety $X$ which satisfies certain conditions
    (in particular this holds for all $X$ of K$3^{[n]}$--type or of Kum$^n$--type).
  Consider a big and nef line bundle $H \in \Pic(X)$.
  Then $H$ has a non-trivial base divisor if and only if there exists an irreducible reduced divisor $F$
  of negative square such that $H$ is of the form $H=mL+F$, where $m\geq 2$, $L$ is a
  primitive movable line bundle with $q(L)=0$ and  $(L,F)_q>0$, such that $RR_X(q(H))=\binom{m+n}{n}$.
  In this case $F$ is exactly the fixed divisor of $H$.
\end{theorem*}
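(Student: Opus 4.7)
The plan is to prove the two implications separately, with the reverse direction being substantially harder.

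For the forward direction, assume $H = mL + F$ has the stated form. Under the paper's hypotheses on $X$, a primitive movable $L$ with $q(L) = 0$ should be semiample and induce a Lagrangian fibration $X \to \bP^n$, so that $h^0(X, mL) = \binom{m+n}{n}$. Kawamata--Viehweg vanishing applied to the big and nef $H$ gives $h^0(X, H) = \chi(X, H) = RR_X(q(H))$, which by hypothesis equals $\binom{m+n}{n}$. Thus the natural injection $H^0(X, mL) \hookrightarrow H^0(X, H)$ obtained by multiplication by the canonical section of $\dO_X(F)$ is an isomorphism; every section of $H$ vanishes on $F$, and $F$ is exactly the fixed divisor of $H$ since $mL$ has no fixed divisor.

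For the converse, write $H = M + F$ with $F$ the fixed divisor and $M = H - F$ the mobile part. First I would establish $q(F) < 0$ by a positive-cone / Hodge-index argument (both $H$ and $H - \epsilon F$ lie in the positive cone while $F$ is effective and $H$ is nef). Then I would show $F$ is irreducible and reduced by appealing to the paper's classification of wall divisors---this is the role of the ``certain conditions'' hypothesis: any nontrivial splitting $F = F_1 + F_2$ into effective classes would contradict the minimality property of the wall-divisor decomposition, and analogously for non-reducedness.

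The crucial step is to show $q(M) = 0$ and $M = mL$ for $L$ primitive movable. Starting from $h^0(X, M) = h^0(X, H) = RR_X(q(H))$, the paper's deformation hypotheses allow a reduction to a tractable model. Arguing by contradiction, if $q(M) > 0$ then $M$ is big and nef on a suitable deformation, yielding $h^0(X, M) = RR_X(q(M))$; this conflicts with $H$ having a genuine fixed divisor via the structure of the movable cone under the paper's hypotheses. Once $q(M) = 0$ is established, writing $M = mL$ with $L$ primitive gives the desired $L$ primitive movable with $q(L) = 0$, and via the paper's conditions $L$ is a Lagrangian fibration class, so $h^0(X, mL) = \binom{m+n}{n}$; equating with $h^0(X, H)$ yields the Riemann--Roch identity. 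Strict positivity of $(L, F)_q$ follows from $(L, H)_q > 0$ (by Hodge-index, using $H$ big and nef and $L$ nef with $q(L) = 0$) together with $(L, H)_q = m\, q(L) + (L, F)_q = (L, F)_q$, and $m \geq 2$ holds because $|L|$ itself has no fixed part.

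The main obstacle is the $q(M) = 0$ step: it encodes the key geometric assertion that fixed divisors of big and nef line bundles arise only by adding an exceptional divisor to a multiple of a Lagrangian fibration class. Its proof depends essentially on the deformation hypotheses: one deforms the pair $(X, H)$ to a model where the geometry of the mobile part $M$ is sufficiently rigid to pin down $q(M)$, and this in turn relies on the classification of extremal rays of the movable cone that the paper's conditions make available.
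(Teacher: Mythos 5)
Your forward direction is essentially the paper's closing argument and is fine. The reverse direction, however, has the right skeleton but each of its three pillars rests on an argument that does not work. Most seriously: the claim that a fixed divisor $F$ has $q(F)<0$ is \emph{not} a Hodge-index statement. Nothing in the signature of the Beauville--Bogomolov--Fujiki form prevents an effective divisor with $h^0=1$ from lying in $\overline{\cC_X}$; ruling this out is precisely where the ``certain conditions'' enter. The paper's Lemma \ref{lem:FsquareNeg} assumes $F\in\overline{\cC_X}$, applies the reflection procedure of Proposition \ref{prop:RefltoBK} to write $F=A+\sum a_iD_i$ with $A\in\birKbar_X$ and the $D_i$ prime exceptional with integral $a_i>0$ (integrality via Lemma \ref{lem:ineqforPED}), and then shows $A$ cannot be fixed: if $q(A)>0$ then $h^0(X,A)=\chi(X,A)=RR_X(q(A))>n+1$ by strict monotonicity of $RR_X$ and Lemma \ref{lem:h0inbirKbar}, and if $q(A)=0$ then Conjecture \ref{conj:strongRLF} makes $A$ movable. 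Your proposal locates the hypotheses in the irreducibility step instead, via a ``minimality property of the wall-divisor decomposition''; no such property is stated in the paper, and the wall-divisor formalism describes the chamber structure of $\cB\cK_X$, not the decomposition of fixed divisors into irreducible components. The paper proves irreducibility by an entirely different mechanism: it selects a component $F_{i_0}$ with $(M,F_{i_0})_q>0$, checks via Theorem \ref{thm:descrofbirKbar} that $H'\coloneqq M+F_{i_0}$ lies in $\birKbar_X\cap\cC_X$, deduces $\chi(X,H)=\chi(X,H')$ from $h^0(X,H)=h^0(X,H')=h^0(X,M)$ and Lemma \ref{lem:h0inbirKbar}, and then the injectivity of $RR_X|_{\bZ_{\geq 0}}$ forces $q(H)=q(H')$ and ultimately $q(F-F_{i_0})=0$, so $F=F_{i_0}$ by the fixed-divisor lemma.

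The step $q(M)=0$ is likewise only gestured at, and the gesture points in the wrong direction: the paper does not deform $(X,H)$ here, it passes to a \emph{birational model} on which $M$ is nef (Proposition \ref{prop:nefonbiratmodel}), which is legitimate because $h^0$ and $\chi$ are birational invariants in codimension one (Lemma \ref{lem:isocodim1}). The actual contradiction when $q(M)>0$ is purely numerical: Kodaira vanishing gives $\chi(X,H)=h^0(X,H)=h^0(X,M)=\chi(X,M)$, injectivity of $RR_X|_{\bZ_{\geq 0}}$ gives $q(H)=q(M)$, and expanding $q(M+F)$ yields $0=(M,F)_q+(H,F)_q$ with both terms nonnegative, hence $q(F)=(H,F)_q-(M,F)_q=0$, contradicting $q(F)<0$. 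Your appeal to ``the structure of the movable cone'' supplies no such contradiction. Finally, your reason for $m\geq 2$ (``$|L|$ has no fixed part'') is insufficient: if $m=1$ nothing about the fixed part of $|L|$ is violated; the paper instead compares $h^0(X,M)=RR_X(q(H))>n+1$ with $h^0(X,L)=n+1$ from Conjecture \ref{conj:strongRLF}. (Your derivation of $(L,F)_q>0$ from $(L,H)_q>0$ is the one place where your route is both valid and arguably cleaner than the paper's.)
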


We expect that the conditions, under which the theorem holds, are satisfied for arbitrary irreducible
symplectic varieties. However, they are very hard to check and include a famous conjecture.

Note that in particular the base divisor is irreducible and reduced of negative square, whenever the theorem
applies, which is  
a remarkable observation in itself.

One obtains the following corollary which fits in the framework of Fujita's conjecture.
\begin{corollary*}[see Corollary \ref{cor:2HnobasecompforISV}]
  Fix an irreducible symplectic variety $X$ which satisfies the assumptions of the theorem.
  Pick a line bundle $H\in \Pic(X)$ which is big and nef. Then
  the base locus of $2H$ does not contain a divisor.
\end{corollary*}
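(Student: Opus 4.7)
The plan is to argue by contradiction: assume that $|2H|$ has a nontrivial fixed divisor $F_0$ and derive a contradiction by invoking Theorem \ref{thm:main-thm} twice, once for $2H$ and once for $H$. Since $2H$ is also big and nef, applying the main theorem to $2H$ forces $F_0$ to be irreducible and reduced, and supplies a decomposition $2H = mL + F_0$ with $L$ primitive movable, $q(L)=0$, $(L,F_0)_q > 0$, and $m \geq 2$.

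Next, I would observe that $F_0$ must also be a fixed component of $|H|$. This is the one slightly nontrivial step, but it is elementary: if some $s \in H^0(X,H)$ did not vanish along $F_0$, then $s^2 \in H^0(X,2H)$ would also not vanish along $F_0$, contradicting the fact that $F_0$ lies in the fixed divisor of $|2H|$. So $|H|$ itself has a nontrivial fixed divisor containing $F_0$; applying Theorem \ref{thm:main-thm} once more, the fixed divisor of $|H|$ is irreducible and reduced, hence must equal $F_0$. This produces a second decomposition $H = m'L' + F_0$ with $L'$ primitive movable, $q(L')=0$, $(L',F_0)_q > 0$, $m' \geq 2$.

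The contradiction then drops out of a short computation with the Beauville--Bogomolov form. Comparing the two decompositions yields $mL = 2H - F_0 = 2m'L' + F_0$; applying $q$ and using the isotropy $q(L)=q(L')=0$ gives
\[
0 \;=\; q(mL) \;=\; 4m'(L',F_0)_q + q(F_0),
\]
so $q(F_0) = -4m'(L',F_0)_q$. Substituting into $q(H) = 2m'(L',F_0)_q + q(F_0)$ forces $q(H) = -2m'(L',F_0)_q < 0$, contradicting the standard fact that $q(H) > 0$ for any big and nef class on an irreducible symplectic variety (combine Fujiki's relation $\int_X H^{2n} = c_X\, q(H)^n$ with $\int_X H^{2n} > 0$ and $q(H) \geq 0$). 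The main, and really only, obstacle is the squaring-of-sections step in the second paragraph; once it is in hand, the rest is a one-line identity powered by the isotropy built into the characterization.
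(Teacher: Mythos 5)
Your proof is correct and follows essentially the same route as the paper: apply Theorem \ref{thm:main-thm} to both $H$ and $2H$, identify the two (irreducible, reduced) fixed divisors via the inclusion of base loci (your squaring-of-sections argument is precisely how one sees $\mathrm{Bs}(2H)\subseteq \mathrm{Bs}(H)$), and derive a contradiction from a Beauville--Bogomolov computation. The only difference is in the last step: the paper gets its contradiction from $q(m'L')=q(2mL+F)>0$ versus $q(m'L')=0$, which requires the divisibility bound of Lemma \ref{lem:ineqforPED}, whereas your rearrangement lands on $q(H)<0$ against $q(H)>0$ for a big and nef class and so avoids that lemma entirely --- a slight simplification of the same argument.
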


The main theorem can be used for K3$^{[n]}$-type and Kum$^n$-type to describe big and nef line bundles with
non-trivial base locus even more explicitly (see Propsition \ref{prop:basecomponentsK3n-type} and \ref{prop:basecomponentsKumn-type}). As further application of the main
theorem one understands the behaviour of divisorial base loci under 
deformations: 
\begin{theorem*} [{see Theorem \ref{thm:defoofBD}}]
  Let $X$ be an irreducible symplectic variety such that its deformation type satisfies the conditions in
  the main theorem. Then in a (semi-)polarized family the locus where the big and nef line bundle acquires a
  divisorial base locus is a disjoint union of certain Noether--Lefschetz divisors.
\end{theorem*}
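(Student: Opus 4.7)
The plan is to combine the main theorem, applied pointwise along the family, with the standard identification of existence loci for algebraic classes in a polarized family as Noether--Lefschetz divisors. Let $\pi\colon\cX\to B$ denote the (semi-)polarized family and $H\in\Pic(\cX)$ the relative polarization. The Beauville--Bogomolov square $q(H_b)$ is locally constant on $B$, so one may assume $RR_X(q(H))$ is constant. By the main theorem, $H_b$ has a non-trivial base divisor precisely when there exist $m\geq 2$ and an irreducible reduced divisor $F_b$ on $X_b$ of negative square such that $L_b:=(H_b-F_b)/m$ lies in $\Pic(X_b)$, is primitive movable with $q(L_b)=0$, and satisfies $(L_b,F_b)_q>0$, subject to the numerical constraint $RR_X(q(H))=\binom{m+n}{n}$.

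The next step is to enumerate the possible numerical types of the data $(m,[F_b])$. Only finitely many $m\geq 2$ satisfy $RR_X(q(H))=\binom{m+n}{n}$. Fixing such an $m$, the relations $q(L_b)=0$, $(L_b,H)_q=(L_b,F_b)_q$, and $q(H)=2m(L_b,F_b)_q+q(F_b)$, together with the sign conditions $(L_b,F_b)_q>0$, $q(F_b)<0$, and the positivity $(F_b,H)_q>0$ forced by bigness and nefness of $H$, bound the discrete invariants $(F_b,H)_q$ and $q(F_b)$; hence only finitely many numerical types $\tau$ are possible. For each such $\tau$, the locus $B_\tau\subseteq B$ on which a flat integral section of the local system $R^2\pi_*\bZ$ of type $\tau$ becomes a $(1,1)$-class is, by Hodge theory in polarized families, a union of Noether--Lefschetz divisors, one per monodromy orbit of sections of type $\tau$.

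The main obstacle is to match the numerical existence loci $B_\tau$ with the geometric locus $S$ of interest. The inclusion $S\subseteq\bigcup_\tau B_\tau$ is automatic from the pointwise statement of the main theorem. For the reverse inclusion one must argue that on each contributing component of $B_\tau$ the additional geometric conditions --- $L_b$ primitive movable and $F_b=H_b-mL_b$ effective, irreducible, and reduced --- are satisfied generically. This is where I would invoke Markman's monodromy-equivariant description of the wall-and-chamber decomposition of the positive cone and the deformation theory of stably prime exceptional divisors, which together ensure that a prime exceptional representative of a given numerical class persists along its entire Noether--Lefschetz component (after possibly passing to a monodromy-finite cover); components where the conditions fail are simply discarded.

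Finally, disjointness follows from uniqueness. By the main theorem, whenever $H_b$ has a divisorial base locus the fixed divisor $F_b$ is uniquely determined by $H_b$, and the requirement that $L_b$ be primitive then fixes $m$ as well. Consequently, the numerical type $\tau$ attached to $b$ is unique, so two contributing Noether--Lefschetz divisors cannot both meet $S$ at the same point, yielding the claimed decomposition of $S$ as a disjoint union of Noether--Lefschetz divisors.
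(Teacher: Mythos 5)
Your overall architecture is the same as the paper's (apply Theorem \ref{thm:main-thm} pointwise, attach a Noether--Lefschetz divisor to the class of the base divisor, use persistence of prime exceptional divisors, and get disjointness from uniqueness of the fixed divisor), but the decisive step --- the reverse inclusion --- has a genuine gap. You only establish the conditions of the main theorem \emph{generically} on each contributing Noether--Lefschetz component and then ``discard'' bad components. The problem is at the special points of a good component: there the movability of $L_b=\tfrac1m(H_b-F_b)$, which you need in order to invoke the ``if'' direction of Theorem \ref{thm:main-thm}, is not automatic. Membership in $\birKbar_{X_b}$ is not a closed condition in families --- at special $b$ additional uniruled (wall) divisors $D$ can appear in $\Pic(X_b)$, and since only $(H_b,D)_q\geq 0$ is guaranteed, one can a priori have $(L_b,D)_q<0$. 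Markman's persistence result gives you that $F_b$ stays (prime) effective, but it does not give you that $F_b$ is actually contained in the base locus of $H_b$ at such points. You could repair this by first proving that the divisorial-base-locus locus $S$ is \emph{closed} (then generic membership of an irreducible component forces the whole component into $S$), but you never argue closedness either.

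The paper closes exactly this hole by a section count rather than by verifying the hypotheses of the main theorem pointwise: locally there is a universal family, Kodaira vanishing makes $h^0(X_t,H_t)=\binom{m+n}{n}$ constant (and, via base change, shows $S$ is closed); $F_t$ stays effective by Markman; Conjecture \ref{conj:strongRLF} applied at the \emph{very general} point of the Noether--Lefschetz locus combined with semicontinuity gives $h^0(X_t,mL_t)\geq\binom{m+n}{n}$ at \emph{every} point (Claim \ref{claim:h0L-t}); the sandwich
\begin{equation*}
  \binom{m+n}{n}=h^0(X_t,H_t)\geq h^0(X_t,mL_t)\geq\binom{m+n}{n}
\end{equation*}
then forces $h^0(X_t,H_t)=h^0(X_t,mL_t)$, so $F_t$ lies in the base locus for all $t$, and the ``only if'' direction of Theorem \ref{thm:main-thm} identifies the full base divisor with $F_t$. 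Your finiteness-of-numerical-types discussion and the disjointness argument are fine (and the uniqueness argument for disjointness matches the paper's), but as written the proof of the reverse inclusion is incomplete.
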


In this article, we only discuss the phenomenon of divisorial base loci.
For K3 surfaces, Mayer's proposition shows that only divisorial base loci can occur for big and nef line
bundles. However, for higher 
dimensional irreducible symplectic varieties, it turns out that the situation is more complicated. Therefore,
we restrict ourselves to divisorial base loci in this article and present first results towards base loci of
higher codimension in the next article, which will follow shortly.

\bigskip

In Sections \ref{sec:basicfacts} to \ref{sec:reflectionsinPED}, we state  properties of irreducible symplectic varieties, including results on
the birational Kähler cone and reflections in prime exceptional divisors.

The main theorem of this article (Theorem \ref{thm:main-thm}) is stated in its most general form in Section \ref{sec:main} and
proved in Section \ref{sec:proofofmainthm}. A discussion of the non-standard assumptions in the theorem can be found in Section
\ref{sec:discuss-conditions}. 

Section \ref{sec:defo} contains the application of the main theorem to the behaviour under deformations. The
explicit description of base divisors for K3$^{[n]}$-type and Kum$^n$-type can be found in Sections \ref{sec:basedivforK3n}
and \ref{sec:basedivforKumn}.

\section*{Acknowledgements}
Most of this article was part of my dissertation. I thank my advisor Daniel Huybrechts for his great
supervision and the SFB TR 45 at University of Bonn for the financial support during my PhD.
I am also grateful for the financial 
support by ERC and the Institute of Theoretical Studies at ETH Zürich during the completion and the editing of
the article. I am thankful to Claire Voisin for the exchange and encouragement during this final period.
Further, I would like to thank my husband for his wonderful support.



\section{Basic facts on irreducible symplectic varieties}\label{sec:basicfacts}
This section contains a collection of basic facts on irreducible symplectic varieties,
which we need later.

Let us start by giving the definition:
\begin{definition}
  An {\it irreducible symplectic variety} is a simply connected, smooth, projective complex variety $X$ such
  that  $H^0(X,\Omega_X^2)$ is generated by a nowhere degenerate two-form. 
\end{definition}

Irreducible symplectic varieties are also known as ``projective hyperkähler manifolds'' and 
as ``irreducible holomorphic symplectic varieties''.
For an
overview on irreducible symplectic varieties, we refer to \cite[Part III]{Gross-Huybrechts-Joyce},
\cite{O'Grady:Intro}, and
\cite{Huybrechts:HK:basic-results}.

All known irreducible symplectic varieties are deformation equivalent to one of the following:
\begin{itemize}[--]
\item the $2n$-dimensional Hilbert scheme Hilb$^n(S)$ of $n$ points on a K3 surface $S$, 
\item the $2n$-dimensional generalized Kummer variety Kum$^n(A)$ associated to an abelian surface $A$,
  which is constructed as 
  a fibre of the summation map Hilb$^{n+1}(A)\to A$,
\item two examples discovered by O'Grady (a ten-dimensional one \cite{O'Grady:10-dim} and a six-dimensional one \cite{O'Grady:6-dim}).
\end{itemize}
Irreducible symplectic varieties that
are deformation equivalent to one of the first two series of examples are called {\it K3$^{[n]}$-type}
respectively {\it Kum$^n$-type}.

We will usually denote the even dimension of an irreducible symplectic variety $X$ by $2n$, and we will not
distinguish between $\Pic(X)$ and $\NS(X)$ which can be naturally identified.


\bigskip

We denote by $q$ the Beauville--Bogomolov--Fujiki quadratic form on 
the second integral cohomology of an irreducible symplectic variety $X$,
which satisfies the Fujiki relation
$\int_X \alpha^{2n} = C_X \cdot q(\alpha)^n$ for every $\alpha \in H^2(X,\bZ)$ with a constant $C_X\in \bQ_{>0}$ depending on $X$ (\cite{Fujiki87}).

The signature of $q$ is $(3, b_2 - 3)$, where $b_2=\rank H^2(X,\bZ)$. Restricted to $H^{1,1}(X,\bR)$ the
signature of $q$ is $(1,b_2-3)$.

Therefore one can define the {\it positive cone} $\cC_X \subseteq H^{1,1}(X,\bR)$ as the connected component of $\{\alpha \in H^{1,1}(X,\bR) \mid q(a)>0\}$
containing an ample class.
Note that the positive cone  contains all ample classes and that a nef class is big if and only if $q(H)>0$.

\bigskip
Another important property of $q$ is the following:

\begin{lemma}[{\cite[Proposition 4.2.(ii)]{Boucksom04}}]\label{lem:BBforeffective}
  Let $X$ be an irreducible symplectic variety, and $E,F \in \Pic(X)$ be effective divisors with no
  common component, then $q(E,F)\geq 0$.
\end{lemma}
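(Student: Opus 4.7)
I would combine the Fujiki relation in polarized form with the Hodge--index theorem and the fact that effective divisors without common components meet properly.

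The starting point is the polarization of the Fujiki relation $\int_X\alpha^{2n}=C_X\cdot q(\alpha)^n$: substituting $\alpha=\omega+sE+tF$ and comparing the $st$-coefficient on both sides yields, for any classes $\omega,E,F\in H^2(X,\bR)$,
\[
(2n-1)\int_X \omega^{2n-2}\cdot E\cdot F \;=\; C_X\,q(\omega)^{n-1}q(E,F)\;+\;2\,C_X\,(n-1)\,q(\omega)^{n-2}q(\omega,E)\,q(\omega,F).
\]
Specialising to $F=\omega$ in the same identity gives $\int_X\omega^{2n-1}\cdot E = C_X\,q(\omega)^{n-1}q(\omega,E)$, so that for any Kähler class $\omega$ and any effective divisor $E$ one has $q(\omega,E)>0$ (since $\int_E\omega^{2n-1}>0$).

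The geometric input is that whenever $E$ and $F$ have no common irreducible component, they meet properly and $E\cdot F$ is an effective codimension-two cycle, whence $\int_X\omega^{2n-2}\cdot E\cdot F\geq 0$ for every Kähler $\omega$. Note that this does \emph{not} directly imply $q(E,F)\geq 0$ through the identity above, since the ``correction term'' $2C_X(n-1)\,q(\omega)^{n-2}q(\omega,E)q(\omega,F)$ is itself positive when $\omega$ is Kähler, and one is subtracting a positive quantity.

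To finish, I would distinguish two cases. If $q(E)>0$ and $q(F)>0$, then since both $E$ and $F$ pair positively with every Kähler class, both lie in the closed positive cone $\overline{\cC}_X$; in the Lorentzian signature $(1,b_2-3)$ of $q$ on $H^{1,1}(X,\bR)$, the reverse Cauchy--Schwarz inequality forces $q(E,F)\geq 0$. If on the other hand $q(E)\leq 0$ (analogously if $q(F)\leq 0$), the hyperplane $E^{\perp_q}$ meets $\cC_X$ non-trivially, and I would approximate by Kähler classes $\omega_k$ converging to a limit class $\omega_\infty\in E^{\perp_q}\cap\overline{\cC}_X$ with $q(\omega_\infty)>0$; passing to the limit in the identity kills the correction term, and $q(E,F)\geq 0$ can be read off directly from $\int_X\omega_\infty^{2n-2}\cdot E\cdot F\geq 0$.

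\textbf{Main obstacle.} The delicate step is the limiting argument in the second case: one has to guarantee that a class $\omega_\infty\in E^{\perp_q}$ with $q(\omega_\infty)>0$ is approachable through the Kähler cone (so that the intersection positivity $\int_X\omega_\infty^{2n-2}\cdot E\cdot F\geq 0$ survives in the limit), not merely through the positive cone. This requires a transversality argument using the openness of the Kähler cone in $\cC_X$ and, in the borderline case $q(E)=0$, an additional perturbation of $E$ inside the effective cone.
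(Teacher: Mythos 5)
The paper does not reprove this lemma; it cites \cite[Proposition 4.2.(ii)]{Boucksom04}, so your attempt must be measured against Boucksom's argument. Your polarized Fujiki identity is correct (I checked the coefficients), and Case 1 (both squares positive, then reverse Cauchy--Schwarz in the Lorentzian form on $H^{1,1}$) is fine. The genuine gap is Case 2, and it is precisely the ``main obstacle'' you flag --- but it is worse than a transversality issue: the class $\omega_\infty$ you need generally does not exist. You need $q(\omega_\infty)>0$, $q(\omega_\infty,E)=0$, \emph{and} $\omega_\infty\in\overline{\cK_X}$, since the positivity $\int_X\omega_\infty^{2n-2}\cdot E\cdot F\geq 0$ is only available for limits of Kähler classes. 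But $E^{\perp_q}\cap\overline{\cK_X}$ is merely a face of the nef cone, which can be $\{0\}$ or consist only of isotropic classes: for instance if $E=E_1+E_2$ is a sum of two prime exceptional divisors, then $q(\omega_\infty,E)=0$ together with $q(\omega_\infty,E_i)\geq 0$ forces orthogonality to each $E_i$, cutting $\overline{\cK_X}$ in codimension at least two, and nothing prevents that face from containing no positive-square class. Enlarging to $E^{\perp_q}\cap\overline{\cC_X}$ loses the intersection positivity, and in the borderline case $q(E)=0$ the set $E^{\perp_q}\cap\{q>0\}$ is empty outright, so the ``perturbation of $E$ inside the effective cone'' has nothing to converge to.

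The repair is to feed a transcendental class into your own identity rather than hunting for a Kähler one. Take $\omega=\sigma+\bar\sigma$, where $\sigma$ is the symplectic form. Then $q(\omega)>0$, while $q(\omega,E)=q(\omega,F)=0$ automatically because $H^{2,0}\oplus H^{0,2}$ is $q$-orthogonal to $H^{1,1}$; the correction term vanishes with no case distinction and no limiting process, and the identity reads $C_X\,q(\omega)^{n-1}q(E,F)=(2n-1)\int_X(\sigma+\bar\sigma)^{2n-2}\cdot E\cdot F$. Only the term $\binom{2n-2}{n-1}\sigma^{n-1}\wedge\bar\sigma^{n-1}$ of the expansion pairs nontrivially with the effective codimension-two cycle $E\cdot F$, and $\sigma^{n-1}\wedge\bar\sigma^{n-1}=i^{(2n-2)^2}\,\sigma^{n-1}\wedge\overline{\sigma^{n-1}}$ is a weakly positive $(2n-2,2n-2)$-form, hence integrates non-negatively over $E\cdot F$. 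This yields $q(E,F)\geq 0$ directly and is, in substance, Boucksom's proof.
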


Using the Beauville--Bogomolov--Fujiki form, the Hirzebruch--Riemann--Roch formula takes a special form for
irreducible symplectic varieties: 
\begin{theorem}[{Riemann--Roch for irreducible symplectic varieties, see \cite[Section 1.11]{Huybrechts:HK:basic-results}}]\label{thm:RRISV} 
For an irreducible symplectic variety $X$ there exist $b_i\in \bQ$  such that for each $L\in \Pic(X)$
\begin{equation*}
  \chi(X,L)=\sum_{i=0}^n b_i q(L)^i.
\end{equation*}
\end{theorem}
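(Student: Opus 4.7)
The plan is to combine Hirzebruch--Riemann--Roch with a generalized Fujiki relation that rewrites integrals of the form $\int_X \alpha^k \cdot \beta$ (with $\beta$ a characteristic class of $X$) as polynomials in $q(\alpha)$. Applying HRR to $L \in \Pic(X)$ gives
\begin{equation*}
  \chi(X,L) \;=\; \int_X \ch(L)\cdot \td(X) \;=\; \sum_{k=0}^{2n} \frac{1}{k!} \int_X c_1(L)^k \cdot \td_{2n-k}(X),
\end{equation*}
where $\td_{j}(X) \in H^{2j}(X,\bQ)$ denotes the degree-$2j$ component of the Todd class. Thus $\chi(X,L)$ is a priori a polynomial of degree $\leq 2n$ in $c_1(L)$. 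The task reduces to showing that only even indices $k=2j$ contribute and that each contribution is a rational multiple of $q(c_1(L))^j$.

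The key input is the generalized Fujiki relation: for any monodromy-invariant class $\beta \in H^{4n-2k}(X,\bR)$, the homogeneous polynomial function $\alpha \mapsto \int_X \alpha^k \cdot \beta$ on $H^2(X,\bR)$ vanishes for $k$ odd and equals $C(\beta)\cdot q(\alpha)^{k/2}$ for a constant $C(\beta) \in \bR$ when $k$ is even. The proof of this goes via the fact that the monodromy group of $X$ acts on $H^2(X,\bR)$ with Zariski closure containing a finite-index subgroup of $O(H^2(X,\bR), q)$, combined with the classical invariant theory statement that the $O(q)$-invariant polynomials on a non-degenerate quadratic space are generated by $q$; a homogeneous such polynomial of degree $k$ must then be either zero (odd $k$) or a scalar multiple of $q^{k/2}$ (even $k$).

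To finish, I would apply this to $\beta = \td_{2n-k}(X)$: since the Chern classes $c_i(X)$ are topological invariants, they are preserved by the monodromy action, and hence so are all polynomial expressions $\td_{j}(X)$ in them. The HRR sum therefore collapses to
\begin{equation*}
  \chi(X,L) \;=\; \sum_{j=0}^n b_j\, q(c_1(L))^j, \qquad b_j := \frac{C(\td_{2n-2j}(X))}{(2j)!}.
\end{equation*}
Rationality of the $b_j$ drops out by interpolation: since $\chi(X,L)\in \bZ$ for every $L \in \Pic(X)$, evaluating the identity on $n+1$ line bundles whose $q$-values are pairwise distinct yields a Vandermonde system over $\bQ$, forcing $b_j \in \bQ$. (In fact, $X$ has ample classes and hence enough integral classes with $q(L)$ taking infinitely many distinct values, so such a choice is always possible.)

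The main obstacle is the generalized Fujiki relation, and within it the density statement for the monodromy action on $H^2(X,\bR)$. That rests on non-trivial deformation theory (local Torelli together with the existence of sufficiently many twistor deformations of $X$), and is the step where the irreducible symplectic hypothesis is genuinely exploited; once it is available, the remainder of the argument is a formal manipulation of HRR.
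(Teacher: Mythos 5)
Your argument is correct, and its overall skeleton --- Hirzebruch--Riemann--Roch plus a ``generalized Fujiki relation'' that collapses $\int_X c_1(L)^{2j}\cdot \td_{2n-2j}(X)$ to a multiple of $q(L)^j$, followed by Vandermonde interpolation to get $b_j\in\bQ$ --- is exactly the structure of the proof in the cited source (the paper itself gives no proof, only the reference to Huybrechts, Section 1.11; see also the treatment in Gross--Huybrechts--Joyce, Corollary 23.17/23.18). Where you genuinely diverge is in how you establish the generalized Fujiki relation. The cited proof obtains it from local Torelli alone: for a class $\beta$ that stays of type $(2n-k,2n-k)$ under small deformations, type considerations force $\int_X \sigma_t^{\,a}\bar\sigma_t^{\,b}\cdots\beta$ to vanish for $a$ large, and since the periods $\sigma_t$ fill an open subset of the quadric $\{q=0\}$, the polynomial $\alpha\mapsto\int_X\alpha^k\beta$ vanishes to high order along that quadric and is therefore a multiple of a power of $q$. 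You instead invoke Zariski density of the monodromy group in $O(H^2(X,\bR),q)$ together with classical invariant theory. That route is valid, but it leans on a considerably deeper theorem (Verbitsky's density of the monodromy/twistor-rotation group), and one has to be a little careful that the density statement is not itself established using Fujiki-type identities; the local Torelli argument is both more elementary and closer to what the reference actually does. Two small points worth making explicit in your write-up: the odd-degree terms also vanish for the cheap reason that the odd Chern classes of $X$ are trivial in rational cohomology (so $\td_j(X)=0$ for $j$ odd), and the interpolation step needs $q$ to take rational values on $\Pic(X)$, which holds because the Beauville--Bogomolov--Fujiki form is rational (indeed integral) on $H^2(X,\bZ)$ --- one can then take the line bundles $mH$ for an ample $H$ and $m=0,1,\dots,n$, since $q(mH)=m^2q(H)$ are pairwise distinct.
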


Let us further mention the following result:
\begin{proposition}[{\cite[Theorem 5.9]{Gross-Huybrechts-Joyce}}] \label{prop:chi0}
  Let $X$ be an irreducible symplectic variety of dimension $2n$. Then $\chi(\dO_X) = n+1$.
\end{proposition}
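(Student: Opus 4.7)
The plan is to compute $\chi(\dO_X)=\sum_{p=0}^{2n}(-1)^p h^p(X,\dO_X)$ by using Hodge symmetry to pass to the groups $H^0(X,\Omega^p_X)$, and then to determine these dimensions using the symplectic form together with the irreducibility assumption. Since $h^{0,p}(X)=h^{p,0}(X)$, one has
\begin{equation*}
  \chi(\dO_X)=\sum_{p=0}^{2n}(-1)^p h^0(X,\Omega^p_X),
\end{equation*}
so everything reduces to computing $h^0(X,\Omega^p_X)$ for $0\leq p\leq 2n$.

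The claim I would then prove is that $h^0(X,\Omega^p_X)=1$ for $p$ even and $0$ for $p$ odd. For the lower bound in the even case, I would use that the symplectic form $\sigma\in H^0(X,\Omega^2_X)$ is nowhere degenerate, so its powers $\sigma^k\in H^0(X,\Omega^{2k}_X)$ are nonzero for $0\leq k\leq n$ (this is straightforward since the top power $\sigma^n$ trivializes $\omega_X$). For the upper bound and for the vanishing in odd degree, I would invoke the characterization of irreducible symplectic varieties in terms of their restricted holonomy: because $X$ is simply connected with a nowhere degenerate holomorphic two-form, its (Calabi-Yau) metric has holonomy group $\mathrm{Sp}(n)$. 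Parallel forms correspond to harmonic forms, hence to $H^0(X,\Omega^p_X)$, and the space of $\mathrm{Sp}(n)$-invariants in $\wedge^p(\bC^{2n})^*$ is one-dimensional, spanned by $\sigma^{p/2}$, when $p$ is even and trivial when $p$ is odd. This is precisely the content used in the proof of the Beauville--Bogomolov decomposition theorem cited in the introduction.

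Once these dimensions are in hand, the computation finishes itself:
\begin{equation*}
  \chi(\dO_X)=\sum_{p=0}^{2n}(-1)^p h^0(X,\Omega^p_X)=\sum_{k=0}^{n}(-1)^{2k}\cdot 1=n+1.
\end{equation*}

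The main obstacle here is the one-dimensionality of $H^0(X,\Omega^{2k}_X)$ for the even degrees $0<2k<2n$: the existence of $\sigma^k$ gives the lower bound for free, but the upper bound genuinely uses the holonomy reduction, which in turn relies on Yau's theorem and on the fact that the ``irreducible'' condition (simply connected plus one-dimensional space of holomorphic two-forms generated by a symplectic form) forces the holonomy to be the full group $\mathrm{Sp}(n)$ rather than a proper subgroup. I would simply cite this standard fact (for instance from Beauville's original article or from the survey \cite{Huybrechts:HK:basic-results}) rather than reprove it, since the remainder of the argument is just bookkeeping.
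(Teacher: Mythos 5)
The paper gives no proof of this statement; it simply cites \cite[Theorem 5.9]{Gross-Huybrechts-Joyce}, and your argument is exactly the standard one behind that reference (Bochner's principle for the Ricci-flat metric, holonomy equal to $\mathrm{Sp}(n)$, and the fact that the $\mathrm{Sp}(n)$-invariants of $\Lambda^{\bullet}(\bC^{2n})^{*}$ are spanned by the powers of the symplectic form), so that $h^{0}(X,\Omega^{p}_{X})=1$ for even $p$ and $0$ for odd $p$, giving $\chi(\dO_X)=n+1$. Your proof is correct; the only phrasing to tighten is that the Bochner principle says holomorphic forms are \emph{parallel} (not merely harmonic), which is what lets you identify $H^0(X,\Omega^p_X)$ with the holonomy-invariants fibrewise.
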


We will frequently use the following basic observation.
\begin{lemma}[{\cite[Section 2.2]{Huybrechts:birHK_deformations}}]\label{lem:isocodim1}
  Let $X$ and $X'$ be birational irreducible symplectic varieties. Then there exist open subsets $U\subseteq X$ and
  $U'\subseteq X'$ with
  $U\iso U'$, such that $X\setminus U$ and $X'\setminus U'$ have codimension at least two.
\end{lemma}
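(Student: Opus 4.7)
The plan is to use a common resolution of the birational map and exploit that both $K_X$ and $K_{X'}$ are trivial. The non-degenerate symplectic form $\sigma \in H^0(X, \Omega^2_X)$ trivialises $\Omega^{2n}_X$, so $K_X \iso \dO_X$; likewise $K_{X'} \iso \dO_{X'}$. Given the birational map $f \colon X \dashrightarrow X'$, I would first choose a smooth projective variety $Z$ together with proper birational morphisms $p \colon Z \to X$ and $q \colon Z \to X'$ such that $q = f \circ p$ on a dense open subset, using Hironaka's resolution of indeterminacies.

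The heart of the argument is to compare two expressions for $K_Z$. Smoothness of $X$ and $X'$ gives
$$K_Z = p^* K_X + \sum_{i} a_i E_i = q^* K_{X'} + \sum_j b_j F_j,$$
where $E_i$ and $F_j$ are the prime $p$- and $q$-exceptional divisors with strictly positive discrepancies $a_i, b_j > 0$. Triviality of the two canonical bundles reduces this to $\sum_i a_i E_i = \sum_j b_j F_j$ in $\Div(Z)$. Linear independence of prime divisors then forces the sets of $p$- and $q$-exceptional primes to coincide.

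It follows that the strict transform $\widetilde D \subset Z$ of any prime divisor $D \subset X$ is neither $p$- nor $q$-exceptional, so $f(D) = q(\widetilde D)$ is again a divisor in $X'$. In other words $f$ contracts no divisor, and the same holds for $f^{-1}$ by symmetry. To conclude, I would invoke the standard fact that a rational map from a smooth variety to a projective variety is defined outside a subset of codimension at least two. Let $U \subseteq X$ be the domain of definition of $f$ and $U' \subseteq X'$ the domain of definition of $f^{-1}$; shrinking to $U \cap f^{-1}(U')$ on one side and its image on the other produces an open isomorphism, and the complements still have codimension $\geq 2$ because no divisor is contracted on either side.

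The main obstacle is the discrepancy comparison, which is conceptually straightforward but requires careful bookkeeping: one must distinguish the two roles of $Z$ (as a resolution of $X$ and as a resolution of $X'$), and correctly invoke the positivity of all discrepancies in the smooth setting. Beyond this step, the argument is entirely formal, combining existence of resolutions, the triviality of the canonical bundles coming from $\sigma$, and well-known properties of rational maps between smooth projective varieties.
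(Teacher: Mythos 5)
Your argument is correct in substance, but note that the paper does not prove Lemma \ref{lem:isocodim1} at all -- it is quoted from Huybrechts, whose argument is specific to the symplectic situation and different from yours. There one takes the domain of definition $U$ of $f$ (whose complement has codimension $\geq 2$ because $X$ is smooth and $X'$ is proper), pulls back the symplectic form $\sigma'$ of $X'$ to $U$, extends it to $X$, and uses $h^{2,0}(X)=1$ to conclude $f^*\sigma'=\lambda\sigma$ with $\lambda\neq 0$; hence $f^*(\sigma'^{\,n})=\lambda^n\sigma^n$ is nowhere zero on $U$, so $df$ is an isomorphism at every point of $U$ and $f|_U$ is an open immersion, and combining with the symmetric statement for $f^{-1}$ gives the lemma. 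Your route uses only $K_X\cong\dO_X$ and $K_{X'}\cong\dO_{X'}$ together with the discrepancy comparison on a common resolution; it is the standard MMP-style argument and proves the more general fact that birational smooth projective varieties with trivial canonical bundle are isomorphic in codimension one, at the cost of invoking resolution of indeterminacies.

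One step you should justify more carefully: triviality of the two canonical bundles gives a priori only the \emph{linear equivalence} $\sum_i a_iE_i\sim\sum_j b_jF_j$, not an equality in $\Div(Z)$. To upgrade it, observe that both divisors are effective and exceptional (over $X$, resp.\ over $X'$), so $p_*\dO_Z\big(\sum_i a_iE_i\big)=\dO_X$ and hence $h^0\big(Z,\dO_Z(\sum_i a_iE_i)\big)=1$; the linear system therefore has a unique effective member, which forces the two divisors to coincide. (Equivalently: for trivializing sections $\omega$, $\omega'$ of $K_X$, $K_{X'}$, the quotient $p^*\omega/q^*\omega'$ is a global meromorphic function on $Z$ whose polar divisor is $q$-exceptional, hence it descends to a regular function on the projective variety $X'$ and is constant.) With that addition, the rest of your argument -- coincidence of the two sets of exceptional prime divisors since all $a_i,b_j>0$, hence no contracted divisors, hence the preimage under $f|_U$ of the indeterminacy locus of $f^{-1}$ still has codimension $\geq 2$ -- goes through and yields the lemma.
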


\section{The birational Kähler cone}\label{sec:cones}
In this section, we collect some useful facts about the birational Kähler cone of an irreducible symplectic variety.

\begin{definition}\label{def:birKbar}
  Let $X$ be an irreducible symplectic variety of dimension $2n$ as before.  Denote its Kähler cone by
  $\cK_X\subseteq H^{1,1}(X,\bR)$. Define its {\it birational Kähler cone} as
  \begin{equation*}
    \cB\cK_X\coloneqq  \bigcup_f f^* (\cK_{X'})\subseteq H^{1,1}(X,\bR),
  \end{equation*}
  where the union is taken over all birational maps $f\colon X\dashrightarrow X'$ from $X$ to another
  irreducible symplectic variety $X'$. Denote its closure by
  $\birKbar\hspace{-0.4em}_X\subseteq  H^{1,1}(X,\bR)$.
\end{definition}
 Note that the pullback along $f\colon X\dashrightarrow X'$ is well-defined, since the
  indeterminacy locus is of codimension at least two by Lemma \ref{lem:isocodim1}.

  \begin{theorem}[{\cite[Proposition 4.2]{Huybrechts2003}}]\label{thm:descrofbirKbar}
    Let $X$ be an irreducible symplectic variety. Then the closure of the birational Kähler cone of $X$
    can be described in the following way
    \begin{equation*}
      \birKbar_X
= \{\alpha \in \overline{\cC_X} \subseteq H^{1,1}(X,\bR) \mid (\alpha, D)_q \geq 0, \ \forall D\subseteq
X \text{ uniruled divisor}\}.
    \end{equation*}
  \end{theorem}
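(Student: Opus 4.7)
The plan is to prove both inclusions, with one being a direct consequence of the Fujiki relation and the other requiring an analysis of the wall structure of the positive cone.

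For the inclusion $\birKbar_X \subseteq \{\alpha \in \overline{\cC_X} \mid (\alpha, D)_q \geq 0 \text{ for all uniruled } D\}$, by continuity it suffices to verify the condition on $\cB\cK_X$ itself. I start with $\alpha = f^*\alpha'$ for some birational map $f\colon X \dashrightarrow X'$ and some Kähler class $\alpha' \in \cK_{X'}$. By Lemma \ref{lem:isocodim1}, $f$ restricts to an isomorphism on open subsets whose complements have codimension at least two, so pullback of degree-two cohomology is well-defined, preserves top self-intersections, and hence by the Fujiki relation preserves $q$. In particular $q(\alpha) = q(\alpha') > 0$, so $\alpha$ lies in the positive cone. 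For a uniruled divisor $D \subseteq X$ with strict transform $D' = f_*D$ (effective on $X'$), polarizing the Fujiki relation gives
\begin{equation*}
  C_X \cdot q(\alpha)^{n-1} \cdot (\alpha, D)_q \; = \; \int_X \alpha^{2n-1} \cdot D \; = \; \int_{X'} (\alpha')^{2n-1} \cdot D',
\end{equation*}
where the second equality uses the projection formula on the common open subset. The right-hand side is non-negative since $\alpha'$ is Kähler and $D'$ is effective; combined with $q(\alpha)>0$ this yields $(\alpha, D)_q \geq 0$, and the closure statement follows by continuity.

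For the reverse inclusion, the strategy is to decompose the positive cone into chambers separated by walls. The Kähler cone $\cK_X$ is one such chamber, and the key structural statement is that every codimension-one wall of $\cK_X$ inside $\cC_X$ is $q$-orthogonal either to the class of an irreducible uniruled divisor, or to the class of an extremal rational curve contained in some movable divisor. Walls of the first kind (\emph{divisorial walls}) correspond to contractions of uniruled divisors and constitute absolute obstructions that cannot be crossed by a birational transformation of $X$. Walls of the second kind (\emph{flopping walls}) correspond to small contractions, and crossing one produces a new irreducible symplectic variety $X'$ whose Kähler cone lies on the opposite side. The hypothesis $(\alpha, D)_q \geq 0$ for every uniruled $D$ ensures that $\alpha$ lies on the same side as $\cK_X$ of every divisorial wall. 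I would then approximate $\alpha$ by interior classes, join each such class to an ample one by a generic path in $\cC_X$, iteratively traverse the (generically finitely many) flopping walls encountered, and arrive at a birational model $X'$ with $\alpha \in f^*\overline{\cK_{X'}} \subseteq \birKbar_X$.

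The main obstacle lies in establishing the wall classification invoked above: one must show that every extremal face of the nef cone of an arbitrary birational model of $X$ arises from a geometric contraction, and identify the divisor (or the divisor containing the contracted curves) as genuinely uniruled. In Huybrechts' original treatment this rests on a combination of deformation arguments via twistor families, input from Boucksom's divisorial Zariski decomposition identifying the rigid components of the pseudo-effective cone, and the existence of birational flopping contractions associated to extremal small rays on hyperkähler manifolds. Once this classification is granted, the wall-crossing argument outlined above completes the proof.
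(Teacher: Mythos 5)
The paper does not actually prove this statement: it is imported wholesale from Huybrechts \cite[Proposition 4.2]{Huybrechts2003}, and the only piece the paper extracts and reuses is the easy inclusion, recorded as Corollary \ref{cor:BBforbig+nef}. Your proof of that inclusion is correct and essentially complete: since birational irreducible symplectic varieties are isomorphic in codimension one (Lemma \ref{lem:isocodim1}), pullback preserves $q$, and the polarized Fujiki relation $\int_X \alpha^{2n-1}D = C_X\, q(\alpha)^{n-1}(\alpha,D)_q$ reduces everything to the positivity of a Kähler volume of an effective divisor on the birational model. (Note that uniruledness plays no role in this direction -- the argument gives $(\alpha,D)_q\geq 0$ for every effective $D$, which is exactly Corollary \ref{cor:BBforbig+nef}.)

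The reverse inclusion, however, contains a genuine gap, and it is precisely where the entire content of the theorem sits. Your wall-and-chamber strategy presupposes (i) that every wall of a Kähler chamber inside $\cC_X$ is $q$-orthogonal either to a uniruled divisor or to a ``floppable'' extremal curve class, and (ii) that every wall of the second kind can actually be crossed, i.e.\ that the corresponding birational modification exists and that the process terminates. Neither is established in your proposal, and both are hard: the classification of walls is the Amerik--Verbitsky/Mongardi theory of wall divisors (MBM classes), which rests on the global Torelli theorem and, in part, on this very proposition of Huybrechts, so invoking it here risks circularity; and the existence/termination of the needed flops is MMP-level input that was certainly unavailable in Huybrechts' original setting and is nontrivial even today. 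Huybrechts' actual argument avoids flops altogether: it is deformation-theoretic, moving the pair $(X,\alpha)$ inside the moduli space of marked manifolds so that the class becomes Kähler on a very general nearby fibre, and then using the projectivity/Kählerness criterion together with the fact that non-separated points of the marked moduli space correspond to birational models to produce $X'$ with $\alpha\in f^*\overline{\cK_{X'}}$; the hypothesis $(\alpha,D)_q\geq 0$ against uniruled (equivalently, by Proposition \ref{prop:boucksom}, negative-square prime) divisors is exactly what prevents the limit from escaping $\birKbar_X$. To make your route work you would have to prove the chamber decomposition from scratch; as written, the hard direction is an outline resting on unproved, and partly circular, input.
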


\begin{corollary}\label{cor:BBforbig+nef}
  Let $X$ be an irreducible symplectic variety, $E \in \Pic(X)$ an effective divisor and $H\in
  \birKbar_X$. Then $(H,E)_q\geq 0$. In particular this applies for all nef line bundles
  $H\in \Pic(X)$.
\end{corollary}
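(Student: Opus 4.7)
The plan is to reduce to the case of a Kähler class on a birational model, where the inequality follows from the Fujiki relation, and then pass to the closure.

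First I would settle the easier case where $H$ is a Kähler class on a (possibly non-projective) irreducible symplectic manifold $Y$ of dimension $2n$. Polarizing the Fujiki relation $\int_Y\alpha^{2n}=C_Y q_Y(\alpha)^n$ and specializing one argument to an effective divisor $E$ and the remaining $2n-1$ arguments to a Kähler class $A\in\cK_Y$ yields a formula of the shape
\[
\int_Y E\cdot A^{2n-1}=c_Y\cdot q_Y(E,A)\cdot q_Y(A)^{n-1}
\]
with $c_Y>0$. The left hand side is strictly positive (pairing of the current of integration along $E$ with the strictly positive form $A^{2n-1}$), and $q_Y(A)^{n-1}>0$, so $q_Y(E,A)>0$.

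Next, for $H\in\cB\cK_X$ I would pick a birational map $f\colon X\dashrightarrow X'$ to an irreducible symplectic manifold $X'$ with $H=f^*H'$ for some Kähler class $H'\in\cK_{X'}$. By Lemma \ref{lem:isocodim1}, $f$ restricts to an isomorphism $U\iso U'$ with complements of codimension $\geq 2$. This has two consequences: (i) $f^*$ induces a Hodge isometry between $(H^2(X',\bR),q_{X'})$ and $(H^2(X,\bR),q_X)$; and (ii) the effective divisor $E$ admits an effective strict transform $E'\subseteq X'$ with $f^*[E']=[E]$ in $H^2$. Applying the previous step on $X'$ then gives
\[
q_X(H,E)=q_{X'}(H',E')>0.
\]
For general $H\in\birKbar_X$ I would approximate by classes in $\cB\cK_X$ and use continuity of $q$ to conclude $q(H,E)\geq 0$. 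The nef case is immediate since $\Amp(X)\subseteq\cK_X\subseteq\cB\cK_X$, hence $\Nef(X)\subseteq\birKbar_X$.

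The main technical input beyond Fujiki is the birational invariance of the BBF form together with the compatibility $f^*[E']=[E]$; both are classical facts that follow from Lemma \ref{lem:isocodim1} and the fact that the second cohomology of an irreducible symplectic variety, equipped with its BBF form, is a birational invariant.
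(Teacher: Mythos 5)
Your proof is correct, and it is essentially a reconstruction of the argument the paper relies on: the paper's ``proof'' is a bare citation of \cite[Proposition 4.2]{Huybrechts2003}, where exactly this computation appears --- the polarized Fujiki relation $\int_{X'} E'\cdot A^{2n-1}=C_{X'}\,q(A)^{n-1}q(A,E')$ applied to a K\"ahler class $A$ and the strict transform $E'$ of $E$ on a birational model, followed by passing to the closure. The one point worth flagging is that the isometry $q_X(f^*\alpha,f^*\beta)=q_{X'}(\alpha,\beta)$ is not a formal consequence of Lemma \ref{lem:isocodim1} alone but a separate classical fact about birational irreducible symplectic varieties; since the paper itself invokes it without comment (e.g.\ in the proof of Lemma \ref{lem:h0inbirKbar}), treating it as a black box is acceptable here.
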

\begin{proof}
  This was actually part of the proof of \cite[Proposition 4.2]{Huybrechts2003}.
\end{proof}
 
\begin{remark}\label{rem:negsquareinThm}
  In  Theorem \ref{thm:descrofbirKbar}, we can restrict ourselves to considering irreducible effective
  divisors $D$ such that  
     $(D,\beta)_q<0$ for some $\beta \in  \overline{\cC_X}$.  
    Since $D$ satisfies  $(\alpha, D)_q >0$ for all $\alpha
    \in \birKbar_X\subseteq \overline{\cC_X}$ (by Corollary \ref{cor:BBforbig+nef}), this implies that
    $D$ is orthogonal to some element in $\cC_X$ in this case.
    Therefore the divisor $D$ satisfies $q(D)<0$. In this situation it was shown in \cite{Boucksom04},
    that $D$ is always uniruled:
  \end{remark}

  \begin{proposition}[{\cite[Proposition 4.7]{Boucksom04}}]\label{prop:boucksom}
    Every irreducible effective divisor $D\in \Pic(X)$ with $q(D)<0$ is uniruled.
  \end{proposition}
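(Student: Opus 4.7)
The proof naturally splits into two stages: first, to show that $D$ is rigid, i.e.\ $h^0(X,\dO_X(mD))=1$ for every $m\geq 1$, and second, to promote this rigidity together with $q(D)<0$ to uniruledness of $D$.

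For rigidity, let $D'\in|mD|$ be any effective divisor in the linear system and write $D'=kD+E$, where $k\geq 0$ is the multiplicity of $D$ in $D'$ and $E$ is effective with $D\not\subset\Supp(E)$. Then $E\sim(m-k)D$, which in particular forces $k\leq m$ (otherwise $-D$ would be effective, which is absurd for a nonzero prime effective divisor). If $k<m$, then $E\neq 0$ shares no component with $D$, so Lemma \ref{lem:BBforeffective} yields $q(E,D)\geq 0$, while bilinearity gives $q(E,D)=(m-k)q(D)<0$, a contradiction. Hence $k=m$, $E=0$, and $D'=mD$, proving $h^0(\dO_X(mD))=1$.

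For the second stage, the plan is to exhibit a movable curve class on a resolution $\pi\colon \tilde D\to D$ against which $K_{\tilde D}$ has negative degree; then $K_{\tilde D}$ is not pseudo-effective and the Boucksom--Demailly--Paun--Peternell theorem forces $\tilde D$, and hence $D$, to be uniruled. Since $K_X=\dO_X$, adjunction on the smooth locus of $D$ identifies $K_{\tilde D}$ with $\pi^*(D|_D)$ modulo $\pi$-exceptional divisors, so the projection formula yields $K_{\tilde D}\cdot (\pi^*H)^{2n-2}=D^2\cdot H^{2n-2}$ for any ample $H$ on $X$. The polarized Fujiki relation evaluates this as
\[
  D^2\cdot H^{2n-2}\;=\;\frac{C_X}{2n-1}\,q(H)^{n-2}\bigl[2(n-1)\,q(D,H)^2 + q(D)\,q(H)\bigr],
\]
which becomes strictly negative as soon as one finds $H\in\overline{\cK_X}$ with $q(D,H)^2$ sufficiently small compared to $|q(D)|\,q(H)$. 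The main obstacle is exactly this choice: a priori the hyperplane $D^\perp$ need not meet $\overline{\cK_X}$, so no nef class sufficiently close to being $q$-orthogonal to $D$ need exist. Overcoming this requires either passing to a birational model on which $D^\perp$ bounds the Kähler chamber (using that $\overline{\cK_X}\subseteq\{q(\cdot,D)\geq 0\}$ by Corollary \ref{cor:BBforbig+nef}), or, as in \cite{Boucksom04}, replacing the direct polytope analysis by Demailly's regularization of closed positive currents together with the divisorial Zariski decomposition on compact Kähler manifolds — analytic inputs that go beyond the Hodge-theoretic toolkit of Sections~\ref{sec:basicfacts}--\ref{sec:cones}.
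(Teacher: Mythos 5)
First, a point of comparison: the paper does not prove this proposition at all --- it is imported verbatim from \cite[Proposition 4.7]{Boucksom04} --- so there is no internal argument to measure yours against. Judged on its own terms, your first stage is correct and self-contained: writing $D'=kD+E$ with $E$ effective sharing no component with $D$, Lemma \ref{lem:BBforeffective} gives $q(E,D)\geq 0$ while $q(E,D)=(m-k)q(D)<0$ for $k<m$, so indeed $|mD|=\{mD\}$ for all $m\geq 1$. That part uses only the tools of Section \ref{sec:basicfacts}.

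The second stage, however, is a strategy with a gap that you yourself flag rather than a proof. The reduction to the Boucksom--Demailly--Paun--Peternell criterion and the polarized Fujiki evaluation of $D^2\cdot H^{2n-2}$ are fine (your coefficient $2(n-1)$ is correct), but everything hinges on producing a nef class $H$ with $q(D,H)^2$ smaller than $|q(D)|\,q(H)/(2n-2)$, and you give no reason such a class exists. Corollary \ref{cor:BBforbig+nef} only yields $\overline{\cK_X}\subseteq\{q(\cdot,D)\geq 0\}$; it does not say that $D^\bot$ contains a face of $\overline{\cK_X}$ meeting $\cC_X$, and in general it does not --- for a prime exceptional divisor, $D^\bot$ supports a wall of the movable cone, not necessarily of the nef cone. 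Your proposed fix of passing to a birational model is also delicate, since the statement that $D^\bot$ bounds the movable cone is itself usually deduced from $D$ being exceptional or uniruled (Markman, Druel), so one risks circularity. Boucksom's actual argument sidesteps the cone analysis entirely: via the divisorial Zariski decomposition of closed positive currents he shows $D$ equals its own negative part, extracts a covering family of curves $C\subseteq D$ with $D\cdot C<0$, and concludes $K_D\cdot C<0$ by adjunction ($K_X$ trivial) and uniruledness by bend-and-break. Since you carry out neither route, the proposal as written establishes only rigidity of $D$, not uniruledness.
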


One can deduce the following standard fact:
  \begin{corollary}\label{cor:MovbirKbar}
    Fix an irreducible symplectic variety $X$.
    Let $\Movbar(X)$ be the closure of the movable cone in $\Pic(X)_\bR$, i.e.~the closure of the set 
    \begin{equation*}
      \{r\cdot M \in \Pic(X)_\bR\mid r\in \bR, \ M \in \Pic(X) \text{\ movable line bundle}\}.
    \end{equation*}
    Then $\Movbar(X)=\birKbar_X\cap \Pic(X)_\bR$.
  \end{corollary}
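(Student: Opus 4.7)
The plan is to prove the two set-theoretic inclusions separately, relying on Theorem \ref{thm:descrofbirKbar} for one direction and on Lemma \ref{lem:isocodim1} for the other, and to use continuity of $q$ and density of rational classes to extend from line bundles to the full closures.

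For the inclusion $\Movbar(X) \subseteq \birKbar_X \cap \Pic(X)_\bR$, I would first treat a movable line bundle $M$ directly and then extend by closure. By Theorem \ref{thm:descrofbirKbar} I need two things: $M\in \overline{\cC_X}$ and $(M,D)_q \geq 0$ for every uniruled irreducible divisor $D$. The second condition is the easier one: such a $D$ is an irreducible effective divisor, and since $|M|$ has no fixed component, a general member $M' \in |M|$ shares no component with $D$, so Lemma \ref{lem:BBforeffective} yields $(M,D)_q = (M',D)_q \geq 0$. For the first condition, after passing to a multiple $kM$ with $\dim |kM|\geq 1$ I pick two distinct members $M_1,M_2$ with no common component, and Lemma \ref{lem:BBforeffective} gives $k^2 q(M) = (M_1,M_2)_q \geq 0$; combined with the fact that $M+\eps A$ lies in $\cC_X$ for any ample $A$ and small $\eps>0$, this shows $M\in\overline{\cC_X}$. (If $M$ is rigid, I would instead argue that a movable rigid class must be numerically trivial, or sidestep this by perturbing with an ample class before applying the linear-system argument.)

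For the reverse inclusion $\birKbar_X \cap \Pic(X)_\bR \subseteq \Movbar(X)$, it suffices by density and continuity to show that for every birational map $f\colon X\dashrightarrow X'$ to another irreducible symplectic variety and every rational ample class $\alpha \in \Pic(X')_\bQ$, the pullback $f^*\alpha$ lies in $\Movbar(X)$. By Lemma \ref{lem:isocodim1} there exist opens $U\subseteq X$, $U'\subseteq X'$ identified by $f$, with complements of codimension at least two. Clearing denominators, a large multiple $L' = m\alpha$ corresponds to a globally generated (even very ample) line bundle on $X'$. Its sections restrict to sections of $L'|_{U'} \cong (f^*L')|_U$, which extend uniquely to $X$ by Hartogs, and since $L'$ is base point free on $X'$ these extended sections have no common zero on $U$. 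Thus the base locus of $f^*L'$ is contained in $X\setminus U$, which has codimension $\geq 2$, so $f^*L'$ is movable, and hence $\alpha = \tfrac{1}{m} f^*L' \in \Movbar(X)$.

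The arguments for each individual line bundle are rather short; the main technical obstacle is the bookkeeping at the boundary, that is, extending from movable (respectively pulled-back ample) $\bQ$-classes to the entire real closures. This is handled by the continuity of the Beauville--Bogomolov--Fujiki form, which makes both the defining inequalities of $\birKbar_X$ from Theorem \ref{thm:descrofbirKbar} and the property of lying in $\Movbar(X)$ closed conditions, so that density of rational classes in $\Pic(X)_\bR$ concludes both inclusions.
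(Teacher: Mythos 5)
Your proposal is correct and follows essentially the same route as the paper's (very terse) proof: one inclusion via Lemma \ref{lem:BBforeffective} combined with the description of $\birKbar_X$ in Theorem \ref{thm:descrofbirKbar}, and the other by observing that pullbacks of ample classes along birational maps (isomorphisms off codimension two, Lemma \ref{lem:isocodim1}) are movable. You simply spell out the details the paper leaves implicit, namely why a movable class lies in $\overline{\cC_X}$, the Hartogs extension argument, and the passage to closures.
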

  \begin{proof}
    By Lemma \ref{lem:BBforeffective} and Theorem \ref{thm:descrofbirKbar} every movable line bundle
    $M\in \Pic(X)$ is contained in $\birKbar_X$.
    Conversely, every element  $\alpha \in \cB\cK_X\cap \Pic(X)$ is the pullback of an ample class on a
    birational model and therefore movable.
  \end{proof}

In general, the union in the definition of $\cB\cK_X$ can be infinite. Therefore it is important to observe
the following:
\begin{proposition}\label{prop:nefonbiratmodel}
  Let $X$ be an irreducible symplectic variety and fix $L\in \Pic(X)$. Suppose $L\in \Pic(X)\cap \birKbar_X\cap \cC_X$. Then
  there exists a birational irreducible symplectic variety $X'$  such that the associated line bundle $L'\in
  \Pic(X')$ is nef.
   If $b_2(X)\neq 4$, the same holds for $L\in \Pic(X)\cap \birKbar_X$ with $q(L)=0$.
\end{proposition}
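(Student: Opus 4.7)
The approach exploits the wall-and-chamber decomposition of $\cB\cK_X$ into the open Kähler cones $f^*(\cK_{X'})$ of birational models (due to Huybrechts and, in greater generality, Markman). Inside the positive cone $\cC_X$ this decomposition is locally finite: the walls lie in hyperplanes $D^\perp$ for a locally finite collection of negative-square classes $D$ (essentially the monodromy-reflective or uniruled divisor classes appearing in Theorem~\ref{thm:descrofbirKbar}). Any point of $\birKbar_X\cap \cC_X$ should therefore lie in the closure of some single chamber, and pulling back along the corresponding birational map produces a nef line bundle on another model.

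\emph{Step 1 (the case $L\in \cC_X$).} I fix an ample class $A\in \Pic(X)\cap \cK_X$ and consider the perturbation $L+\eps A$ for $\eps>0$. Since $\cC_X$ is open and contains $L$, the ray stays in $\cC_X$; and since $\birKbar_X$ is convex (it is cut out inside $\overline{\cC_X}$ by closed half-spaces via Theorem~\ref{thm:descrofbirKbar}) with $A\in \cK_X\subseteq \cB\cK_X$, the class $L+\eps A$ lies in $\cB\cK_X$ for all sufficiently small $\eps>0$. Local finiteness of walls inside $\cC_X$ near $L$ then provides an $\eps_0>0$ such that the entire segment $\{L+\eps A \mid 0<\eps\leq \eps_0\}$ is contained in a single chamber $f^*(\cK_{X'})$. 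Letting $\eps\downarrow 0$ places $L$ in the closure $f^*(\overline{\cK_{X'}})$. Identifying $\Pic(X)$ with $\Pic(X')$ via the codimension-$\geq 2$ isomorphism of Lemma~\ref{lem:isocodim1}, the line bundle $L'\in \Pic(X')$ corresponding to $L$ is nef on $X'$.

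\emph{Step 2 (the case $q(L)=0$, $b_2(X)\neq 4$).} Now $L$ lies on $\partial\cC_X$, so I first push it into the interior. For any ample $A$, the reverse Cauchy--Schwarz inequality in the hyperbolic Picard lattice yields $(L,A)_q>0$, hence
\[
q(L+\eps A)\;=\;2\eps\,(L,A)_q+\eps^2 q(A)\;>\;0
\]
for every $\eps>0$, and $L+\eps A\in \cC_X\cap \cB\cK_X$ for $\eps$ small by the same convexity argument. I would then like, as before, to find one chamber $f^*(\cK_{X'})$ containing all these perturbations for $\eps$ small enough. The catch is that $L$ sits on $\partial\cC_X$, where a priori infinitely many walls can accumulate. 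The hypothesis $b_2(X)\neq 4$ is what rules out this pathology: it excludes the rank-two hyperbolic situation in which infinitely many negative-square wall divisors orthogonal to $L$ would cause infinitely many chambers to touch the isotropic ray $\bR_{>0}L$. Granting this, for $\eps$ sufficiently small $L+\eps A$ lies in a single chamber $f^*(\cK_{X'})$, and Step 1 concludes the argument.

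\emph{Main obstacle.} The routine part is the perturbation-into-the-interior trick; the technical heart of the proposition is the boundary analysis in Step 2, namely verifying that $b_2(X)\neq 4$ is sufficient to prevent infinitely many chambers from clustering around $\bR_{>0}L$. This is where I expect the proof to invoke, in an essential way, the existence of transcendental isotropic classes in the Mukai-type lattice $H^2(X,\bZ)$, which fails precisely in the rank-two hyperbolic regime excluded by $b_2(X)\neq 4$.
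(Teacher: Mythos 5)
Your Step~1 is essentially the paper's argument for the interior case: Proposition~\ref{prop:walldivcutK} yields $\birKbar_X\cap\cC_X=\bigcup_f f^*(\overline{\cK_{X'}}\cap\cC_X)$, so $L$ lies in the closure of a single birational K\"ahler chamber and its transform is nef on the corresponding model. One imprecision worth noting: convexity of $\birKbar_X$ together with $A\in\cK_X$ only places $L+\eps A$ in the \emph{interior} of $\birKbar_X$, not in the open set $\cB\cK_X$ (the two differ by the walls); you then still need Proposition~\ref{prop:walldivcutK} to identify the chamber containing $L+\eps A$ with an honest $f^*(\cK_{X'})$ rather than a monodromy translate of one. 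This is fixable and is in effect what the paper does.

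The genuine gap is in Step~2. You correctly isolate the difficulty --- that infinitely many chambers could accumulate along the isotropic ray $\bR_{>0}L\subseteq\partial\cC_X$ --- but you then write ``Granting this\ldots'' and never establish the required finiteness; moreover your closing guess that the mechanism is ``the existence of transcendental isotropic classes in the Mukai-type lattice'' is not how the hypothesis $b_2(X)\neq 4$ actually enters. The paper's argument is concrete and two-step: (i) Theorem~\ref{thm:boundedwalldiv} (Amerik--Verbitsky): for $b_2(X)\geq 5$ the monodromy group acts on primitive wall divisors with finitely many orbits, so the squares $q(W)$ of primitive wall divisors are bounded by some $N$; (ii) Proposition~\ref{prop:finiteperp} (Markman--Yoshioka): for any closed rational polyhedral cone $\Pi\subseteq\overline{\cC_X}$ and any bound $N$, only finitely many classes $w$ with $-N<q(w)<0$ satisfy $w^\perp\cap\cC_X\cap\Pi\neq\emptyset$. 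Taking $\Pi=\Pi'+\bR_{\geq 0}L$ with $\Pi'\subseteq\Nef(X)$ rational polyhedral of full dimension $\rho(X)$, one concludes that $\Pi$ meets only finitely many walls, hence decomposes into finitely many chambers $\Pi\cap f^*(\Nef(X'))$, and $L$ lies in one of them. Without an input of this kind --- a uniform bound on $q(W)$ for wall divisors, which is precisely where $b_2(X)\neq 4$ is used --- your perturbation argument cannot be completed, since the birational model attached to $L+\eps A$ could change infinitely often as $\eps\downarrow 0$.
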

  The proof uses the theory of wall divisors (also known as MBM-classes), which can be used to describe the
  interior structure of $\cB\cK_X$.

  Let $\dW'$ be the set of divisors $W \in \Pic(X)$ such that $W^\perp$ contains a face of one of the Kähler
  chambers in $\cB\cK_X$,
  i.e.~there is a birational map $f\colon X \dashrightarrow X'$ from $X$ to another irreducible symplectic
  variety $X'$ such that  $W^{\perp}\cap f^*(\cK_{X'})$ contains an open subset of $W^\perp$.
  \begin{definition}[{Wall-divisors/MBM-classes; compare \cite[Definition 1.2]{Mongardi13}, \cite[Definition 1.13]{AmerikVerbitsky14}}]
    The set $\dW$ of wall divisors on $X$ (also known as MBM-classes) is the union of $\phi(\dW')$ for all
     $\phi \in \MonHdg(X)$, where $\MonHdg(X)$ denotes the monodromy operators on $H^2(X,\bZ)$ which preserve
    the Hodge structure. 
  \end{definition}

  An important property of wall divisors is
  \begin{proposition}[{\cite[Theorem 1.19]{AmerikVerbitsky14}}]\label{prop:walldivcutK}
    Let $X$ be an irreducible symplectic variety. The Kähler cone $\cK_X$ is a connected component of
    $\cC_X\setminus \bigcup_{W\in \dW} W^\perp$.
     Moreover, for every element $\alpha \in \cC_X\setminus \bigcup_{W\in \dW} W^\perp$ there exists
    $\phi\in \MonHdg(X)$, such that $\phi(\alpha)\in f^*(\cK_X')$ for a birational irreducible symplectic variety
    $X'$ as above.  
  \end{proposition}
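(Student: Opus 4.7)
The plan is to follow the strategy of Amerik--Verbitsky, which combines Verbitsky's global Torelli theorem with a deformation-theoretic analysis of MBM classes. A preliminary step is to establish that the wall system $\{W^\perp\}_{W\in\dW}$ is locally finite inside $\cC_X$. This follows from a uniform lower bound on $q(W)$ for $W\in\dW$, which in turn rests on the deformation-invariance of MBM classes: a primitive integral class of negative square which is MBM on one fibre of a family of ISVs remains MBM on every fibre where it stays of type $(1,1)$. Granted local finiteness, $\cC_X\setminus\bigcup_{W\in\dW}W^\perp$ genuinely decomposes into open chambers.

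For the inclusion $\cK_X\subseteq\cC_X\setminus\bigcup W^\perp$, I would argue directly from the definition. Every $W\in\dW'$ is, by construction, orthogonal to an open subset of a face of some $f^*(\cK_{X'})$, so $W^\perp$ cannot meet the interior of any Kähler chamber of any birational model; applying an arbitrary $\phi\in\MonHdg(X)$ and using that $\cK_X=\id^*(\cK_X)$ appears in the union defining $\cB\cK_X$, we get $\cK_X\cap W^\perp=\emptyset$ for all $W\in\dW$. Conversely, any hyperplane separating two of the translates $\phi\circ f^*(\cK_{X'})$ lies in $W^\perp$ for a suitable $W\in\dW$, so $\cK_X$ is not only contained in, but equals, a full chamber.

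The main content is the surjectivity statement: given $\alpha\in\cC_X\setminus\bigcup W^\perp$, produce $\phi\in\MonHdg(X)$ and a birational model $f\colon X\dashrightarrow X'$ with $\phi(\alpha)\in f^*(\cK_{X'})$. The idea is to choose a smooth family $\pi\colon\cX\to B$ of marked ISVs deforming $X$ along which $\alpha$ remains of type $(1,1)$, and such that on a very general fibre $X_t$ the $(1,1)$-part of $H^2(X_t,\bZ)$ is spanned by $\alpha$ alone. No MBM class survives on such an $X_t$ (it would deform back and contradict $\alpha\notin\bigcup W^\perp$), hence $\overline{\cK_{X_t}}=\overline{\cC_{X_t}}$ and $\alpha$ is Kähler on $X_t$. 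Parallel transport of $\alpha$ back to $X$ along a path in the Teichmüller space, combined with global Torelli, then yields the required monodromy operator and birational model.

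The hard step is unquestionably this last one. One must arrange that the deformation preserves the Hodge type of $\alpha$ while generically killing all other classes, and one must control what happens at the non-Hausdorff points of the Teichmüller space, since two inseparable marked points correspond to birational but non-isomorphic ISVs. Checking that parallel transport in the connected component of moduli produces an actual birational partner of $X$ (and not merely a Hodge-theoretic twin) is the technical core, and rests on Huybrechts' identification of inseparable Teichmüller points with birational hyperkähler manifolds together with Verbitsky's global Torelli theorem.
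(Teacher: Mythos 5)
This proposition is quoted in the paper directly from \cite[Theorem 1.19]{AmerikVerbitsky14} with no proof supplied, so there is no internal argument to compare yours against; what you have written is an outline of the Amerik--Verbitsky proof itself. As such an outline it names the right ingredients (deformation invariance of MBM classes, the global Torelli theorem, Huybrechts' identification of inseparable marked points with birational models), but it has one concrete structural problem. Your preliminary step derives local finiteness of the wall arrangement from a uniform bound on $q(W)$ for $W\in\dW$. That bound is not part of \cite{AmerikVerbitsky14}: it is the later result recorded in this paper as Theorem \ref{thm:boundedwalldiv}, it is only known for $b_2(X)\neq 4$, and building it into the proof of Proposition \ref{prop:walldivcutK} would make the proposition conditional on $b_2(X)\neq 4$ --- whereas the paper needs (and the cited theorem provides) the statement unconditionally, e.g.\ for the $q(L)>0$ case of Proposition \ref{prop:nefonbiratmodel}. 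The bound is also not needed for the first assertion: that $\cK_X$ is a connected component of $\cC_X\setminus\bigcup_{W\in\dW}W^\perp$ follows from the Huybrechts--Boucksom description of $\cK_X$ as the locus in $\cC_X$ positive on all rational curve classes, together with the fact that the extremal such classes are dual to wall divisors; no local finiteness of the full arrangement enters.

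For the ``moreover'' part, your middle paragraph as written does not close. Deforming $X$ to a fibre $X_t$ with cyclic (or trivial) N\'eron--Severi group shows that $\alpha$ becomes K\"ahler \emph{on $X_t$}, but $X_t$ is not birational to $X$, and parallel transporting a K\"ahler class out and back along the same path returns you to $\alpha$ on $X$ with nothing gained --- K\"ahlerness is not transported. The actual mechanism, which you only gesture at in your last paragraph, is to return along a \emph{different} path to a marked pair $(X',\eta')$ lying in the same fibre of the period map over the period of $(X,\eta)$; Torelli plus Huybrechts then identify $X'$ as a birational model and the composite parallel transport as the required element of $\MonHdg(X)$. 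Arranging that the resulting K\"ahler chamber contains the monodromy image of the prescribed $\alpha$ is exactly the content of the theorem, and your sketch defers it entirely. So the proposal correctly locates the difficulty but does not constitute a proof of the statement; for the purposes of this paper the correct move is the one the paper makes, namely to cite \cite[Theorem 1.19]{AmerikVerbitsky14}.
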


  In order to deal with the case $q(L)=0$ of Proposition \ref{prop:nefonbiratmodel}, we further need the
  following:
  
\begin{theorem}\label{thm:boundedwalldiv}
  Let $X$ be an irreducible symplectic variety with $b_2(X)\neq 4$.
  Then the set $\{q(W) \mid W\in h^{1,1}(X) {\rm\ (primitive)\ wall\ divisor\ on \ }X\}$ is
  bounded.
\end{theorem}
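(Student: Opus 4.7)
The plan is to bound $q(W)$ via a deformation argument that reduces to the case of Picard rank $2$, where $W$ becomes proportional to a prime exceptional divisor, and then to invoke Markman's integrality theorem for the associated reflection (as recalled in Section \ref{sec:reflectionsinPED}) to bound $q(W)$. First I would establish deformation invariance of the wall divisor property: in a smooth family of irreducible symplectic varieties, a class $W_0 \in \Pic(X_0) \cap \dW$ deforms to a wall divisor on every fibre $X_t$ on which its parallel transport remains of type $(1,1)$. This follows from Proposition \ref{prop:walldivcutK}, the deformation invariance of the Beauville--Bogomolov form, and the compatibility of $\MonHdg$ with parallel transport.

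Second, given a primitive wall divisor $W$ on $X$ with $q(W) = -k$, I would use the surjectivity of the period map within the deformation type of $X$ to find a deformation $X'$ with $\Pic(X') = \bZ \cdot W' \oplus \bZ \cdot A$, where $W'$ is the parallel transport of $W$ and $A$ is ample. The existence of such a period requires the transcendental lattice, of signature $(2, b_2-5)$, to be non-degenerate over $\bR$, i.e.\ of rank at least $2$; for an irreducible symplectic variety one always has $b_2 \geq 3$, so this is equivalent to the hypothesis $b_2 \neq 4$.

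On $X'$ the space $\Pic(X')_\bR$ has dimension $2$ and signature $(1,1)$, so by Corollary \ref{cor:MovbirKbar} and Proposition \ref{prop:walldivcutK} the wall divisor $W'$ determines an extremal ray of $\Movbar(X')$ via its orthogonal complement. A suitable element of $\MonHdg(X')$ then produces a birational model $X''$ on which (the image of) $W'$ is proportional to a prime exceptional divisor $F$; primitivity forces $W' = \pm F$. Markman's theorem now asserts that the reflection $r_F$ is a genuine monodromy operator on $H^2(X'', \bZ)$, constraining $q(F)$ to divide $2 \cdot \div(F)$. Since $\div(F)$ divides the discriminant of the deformation-invariant lattice $H^2(X,\bZ)$, the desired bound on $|q(F)| = |q(W)|$ follows.

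The main obstacle is the reduction in the second paragraph, namely realising $W$ as a Hodge class on a Picard rank $2$ deformation---this is where $b_2 \neq 4$ is genuinely used. A secondary subtlety in the third paragraph is ruling out the isotropic case, where the extremal ray orthogonal to $W'$ might be generated by a square-zero class corresponding to a Lagrangian fibration; but in rank $2$ this would force $q(W') = 0$, contradicting the fact that wall divisors have strictly negative square (Remark \ref{rem:negsquareinThm}).
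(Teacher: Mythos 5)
Your route is genuinely different from the paper's: the paper proves this in one step by citing Amerik--Verbitsky's theorem that for $b_2(X)\geq 5$ the monodromy group acts on the set of primitive wall divisors (MBM classes) with finitely many orbits, and concludes since monodromy operators are isometries for $q$. Your attempt to replace this by a reduction to Markman's theory of prime exceptional divisors has a genuine gap at its central step: not every wall divisor becomes proportional to a prime exceptional divisor on some birational model. The hyperplanes $W^\perp$ in Proposition \ref{prop:walldivcutK} are of two kinds: those supporting a face of $\Movbar(X')$, which are indeed orthogonal to (stably) prime exceptional classes up to a scalar, and those passing through the \emph{interior} of $\Movbar(X')$, separating the nef cones of two birational models related by a small (flopping) transformation. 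Such interior walls persist in Picard rank $2$ (Mukai flops on K3$^{[2]}$-type fourfolds of rank $2$ already give examples). For a wall divisor of this second kind there is no irreducible reduced divisor of negative square attached to it --- $\pm W'$ need not even be effective --- and the reflection in $W'^\perp$ is in general \emph{not} a monodromy operator (otherwise monodromy would identify adjacent Kähler chambers inside the movable cone). So Proposition \ref{prop:RisMonOP} and Lemma \ref{lem:ineqforPED} do not apply, and the bound $q(W)\mid 2\div(W)$ is unavailable precisely for the classes your argument does not see. This dichotomy is the reason Amerik--Verbitsky resort to a homogeneous-dynamics argument rather than to Markman's reflections.

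Two secondary issues. The deformation invariance of the wall-divisor property, which you use to reduce to rank $2$, is itself a substantive theorem of Amerik--Verbitsky and Mongardi relying on twistor families and global Torelli; it does not follow formally from Proposition \ref{prop:walldivcutK} together with parallel transport, so it needs a citation rather than the sketched derivation. Moreover, in your rank-$2$ reduction the transcendental lattice has signature $(2,b_2-4)$, not $(2,b_2-5)$, and the hypothesis $b_2\neq 4$ is not really consumed where you place it: the excluded case enters through the failure of the density/ergodicity arguments when $b_2-3=1$, not through the nonexistence of a Picard-rank-$2$ deformation. Finally, prime exceptional divisors need not have primitive class (the exceptional divisor of $\Hilb^n(S)\to\Sym^n(S)$ is $2\delta$ with $\delta$ primitive), so ``primitivity forces $W'=\pm F$'' is not quite correct even in the divisorial case, although there the bound survives up to a bounded factor.
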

\begin{proof}
  This is due to Amerik and Verbitsky.  Their theorem \cite[Theorem 3.17]{AmerikVerbitsky16X} states that for any
  irreducible symplectic variety $X$ with $b_2(X)\geq 5$ the monodromy
  group acts on the set of primitive wall divisors  with finitely many orbits. Indeed they show this even for the larger set of such classes
  in $H^2(X,\bZ)$. Since monodromy operators are isometries for $q$, this implies
  the statement.
\end{proof}

The other central ingredient is the following basic proposition:
\begin{proposition}[{\cite[Proposition 3.4]{MarkmanYoshioka14}}]\label{prop:finiteperp}
  Let $X$ be an irreducible symplectic variety, $\Pi \subseteq \overline{\cC_x}$ a (closed) rational polyhedral
  cone, and $N\in \bN$ a fixed natural number. Then the set
  \begin{equation*}
    \{w \in H^{1,1}(X,\bZ) \mid -N<q(w)<0 {\rm\ and\ }  w^{\perp}\cap \cC_X \cap \Pi \neq \emptyset \}
  \end{equation*} 
is finite.
\end{proposition}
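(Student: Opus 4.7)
The plan is to combine the Hodge index theorem with hyperbolic geometry on the projectivized positive cone. After reducing to the non-trivial case $\cC_X \cap \Pi \neq \emptyset$ (otherwise the set is empty), I would fix a rational reference class $h \in \cC_X \cap \Pi$. Since $q|_{H^{1,1}(X,\bR)}$ has signature $(1, b_2 - 3)$, its restriction to $h^\perp$ is negative definite. Decomposing any $w \in H^{1,1}(X,\bZ)$ as $w = c\, h + w'$ with $c = q(h,w)/q(h)$ and $w' \in h^\perp$, the relation $q(w) = c^2 q(h) + q(w')$ together with the negative definiteness of $q|_{h^\perp}$ reduces finiteness to a uniform bound on $|q(h,w)|$: once this and $|q(w)| < N$ are in hand, $w$ is confined to a bounded region of $H^{1,1}(X,\bR)$ and hence to a finite subset of the lattice.

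To produce this bound, I would exploit that $\bH := \bP(\cC_X)$ is a model of hyperbolic space. For $w$ with $q(w) < 0$, the totally geodesic hyperplane $H_w := \bP(w^\perp) \cap \bH$ sits at distance
\[ \sinh d_{\bH}([h], H_w) \;=\; \frac{|q(h,w)|}{\sqrt{|q(w)|\, q(h)}} \]
from $[h]$, and the hypothesis says $H_w$ meets the image $\overline{\Pi} \subseteq \bH$ of $\cC_X \cap \Pi$ under projectivization. In the special case $\Pi \subseteq \cC_X \cup \{0\}$ (no extreme rays on the light cone), $\overline{\Pi}$ is compact in $\bH$, so $d_{\bH}([h], H_w) \leq \mathrm{diam}(\overline{\Pi})$; combined with $|q(w)| < N$ this gives the desired bound.

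The main obstacle is the general case, where $\Pi$ may have finitely many extreme rays on the light cone $\{q = 0\}$, generated by primitive null lattice vectors $\alpha_1, \ldots, \alpha_s$, which become cusps of $\overline{\Pi}$ at ideal boundary points of $\bH$. I would split the candidates by the subset $J \subseteq \{1,\ldots,s\}$ of indices $j$ with $q(w, \alpha_j) = 0$. For $J = \emptyset$, integrality of $w$ and of the $\alpha_j$ yields $|q(w, \alpha_j)| \geq 1$ for every $j$, and a standard horoball estimate (in the upper half-space model of $\bH$ at each cusp) shows that with $|q(w)| < N$ the hyperplane $H_w$ cannot enter a fixed horoball neighbourhood of $\alpha_j$; hence $H_w$ meets only a compact truncation of $\overline{\Pi}$, reducing to the previous case. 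For $J \neq \emptyset$, I would pick $j \in J$ and use that $q$ descends to a negative definite form on the lattice $\alpha_j^\perp / \bZ\alpha_j$ (since in signature $(1, b_2-3)$ the orthogonal complement of a null vector is negative semi-definite with kernel the null line itself); thus $-N < q(w) < 0$ confines the image $\bar w$ of $w$ in this quotient to finitely many values, and the remaining integer translate $k$ in $w = w_0 + k\alpha_j$ is restricted by requiring $-k$ to lie in the image of the function $\beta \mapsto q(w_0, \beta)/q(\alpha_j, \beta)$ on $\cC_X \cap \Pi$. This image is bounded, because distinct null extreme rays of $\Pi$ pair strictly positively (in hyperbolic signature, two null vectors orthogonal to one another must be proportional), so the function has finite limits at every cusp of $\overline{\Pi}$ and is therefore bounded on its closure. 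Combining the finitely many cases gives the finiteness statement.
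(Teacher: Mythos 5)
The paper does not actually prove this proposition: it is quoted from \cite[Proposition 3.4]{MarkmanYoshioka14} without argument, so there is no in-paper proof to compare against. Your self-contained argument is, as far as I can check, correct in structure and in each main step: the reduction of finiteness to a uniform bound on $|q(h,w)|$ via the signature $(1,b_2-3)$ and the decomposition $w=c\,h+w'$ is sound; the distance formula and the hemisphere-radius computation (the hyperplane $w^\perp$, viewed in upper half-space coordinates at the cusp $[\alpha_j]$, is a hemisphere of Euclidean radius proportional to $\sqrt{|q(w)|}/|q(w,\alpha_j)|$) do give the horoball-avoidance you invoke; and the observation that two non-proportional null classes in $\overline{\cC_X}$ pair strictly positively correctly identifies the ideal points of $\bP(\Pi)$ as finitely many isolated cusps, so that truncating by horoballs leaves a compact set. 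One cosmetic point: since the paper does not assume $H^2(X,\bZ)$ is even (evenness appears only as an extra hypothesis in Lemma 5.2), $q(w,\alpha_j)$ is a priori only a half-integer, so the lower bound should be $|q(w,\alpha_j)|\geq 1/2$; this changes nothing.

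The one place where your justification is genuinely incomplete is the boundedness of $\beta\mapsto q(w_0,\beta)/q(\alpha_j,\beta)$ on $\cC_X\cap\Pi$ in the case $J\neq\emptyset$. Your stated reason (``distinct null extreme rays pair strictly positively, so the function has finite limits at every cusp'') handles the cusps $[\alpha_i]$ with $i\neq j$, but not the cusp $[\alpha_j]$ itself, where numerator and denominator both tend to $0$ (recall $q(w_0,\alpha_j)=q(\alpha_j,\alpha_j)=0$). Boundedness there still holds, but for a different reason: writing $\beta=\alpha_j+\epsilon v$ with $v$ in the compact link of the ray $\bR_{\geq0}\alpha_j$ inside $\Pi$, both $q(w_0,\beta)$ and $q(\alpha_j,\beta)$ equal $\epsilon$ times a continuous function of $v$, and $q(\alpha_j,v)>0$ on the link because $q$ restricted to $\alpha_j^{\perp}$ is negative semi-definite with radical $\bR\alpha_j$ and $\Pi\subseteq\overline{\cC_X}$; compactness of the link then bounds the ratio. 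This step uses the polyhedrality of $\Pi$ essentially --- on all of $\cC_X$ the ratio is unbounded near $[\alpha_j]$ --- so it should be spelled out rather than absorbed into the ``finite limits at cusps'' phrase. With that repair the proof is complete.
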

Using these results, we can prove 
Proposition \ref{prop:nefonbiratmodel}:
\begin{proof}[{Proof of Proposition \ref{prop:nefonbiratmodel}}]
  From Proposition \ref{prop:walldivcutK}, one can deduce that
  \begin{equation*}
    \birKbar_X\cap \cC_X=\bigcup_f f^* (\overline{\cK_{X'}}\cap \cC_X),
  \end{equation*}
  where the union is again taken over
  all $f\colon X\to X'$, where $X'$ is a birational irreducible symplectic variety. This was not a priori clear, since the
  union could be infinite.
    Therefore, a line bundle in the intersection $L \in \Pic(X)\cap \birKbar_X\cap \cC_X$ lies in one of the $f^*(\overline{\cK_{X'}})$. Thus
    $L'\coloneqq {f^{-1}}^*(L) \in \overline{\cK_{X'}}$ is nef.

    It only remains to deal with the case $q(L)=0$ under the assumption that $b_2(X)\neq 4$.
  Fix $L\in \birKbar_X\cap \Pic(X)\cap \del\cC_X$. We need to show that the associated line bundle $L'$ is nef on some
  birational model.
  
  Pick any (closed) rational polyhedral cone $\Pi\subseteq \birKbar_X\cap \Pic(X)_\bR$ of dimension $\rho(X)\coloneqq h^{1,1}(X,\bZ)$ such that $L \in \Pi$ (e.g.~one
  can construct this by picking any rational polyhedral cone $\Pi'$ in $\Nef(X)=\overline{\Amp(X)}$ of
  dimension $\rho(X)$, and
  choosing $\Pi \coloneqq \Pi' + \bR_{\geq 0} L$).

  By  Theorem \ref{thm:boundedwalldiv}, the square of primitive wall divisors is bounded, and therefore we can
  apply Proposition \ref{prop:finiteperp} to deduce that 
    \begin{equation*}
    \{w \in H^{1,1}(X,\bZ) \mid w {\rm\ is\ wall\ divisor\ and\ }  w^{\perp}\cap \cC_X \cap \Pi \neq \emptyset
    \}
  \end{equation*} 
  is a finite set. 

  In particular $\Pi$ decomposes into a finite union of chambers of the form $\Pi \cap f^*(\Nef(X'))$,
  where $f\colon X \dashrightarrow X'$ are birational irreducible symplectic varieties.
  Therefore, $L$ lies in $f^*(\Nef(X'))$ for one such $X'$. Thus the associated line bundle
  $L' \coloneqq {f^{-1}}^*(L)$ lies in $\Nef(X')$ as claimed.
\end{proof}

\begin{remark}
  The same arguments show: If $b_2(X)\neq 4$ and $\birKbar_X$ is rationally polyhedral, then there are only finitely many
  chambers in $\birKbar_X$.
\end{remark}

The following useful property of line bundles in $A\in \Pic(X)\cap \birKbar_X$ with $q(A)\neq 0$ follows
from Proposition \ref{prop:nefonbiratmodel}:
\begin{lemma}\label{lem:h0inbirKbar}
  Let $X$ be an irreducible symplectic variety and pick a line bundle 
$A \in \Pic(X)\cap\birKbar_X\cap \cC_X$. Then 
  $h^0(X,A)=\chi(X,A)$. 
\end{lemma}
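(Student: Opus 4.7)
The plan is to reduce to a birational model where $A$ becomes nef, apply Kawamata--Viehweg vanishing there, and then transport both $h^0$ and $\chi$ back to $X$.

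First, I would invoke Proposition \ref{prop:nefonbiratmodel}: since $A\in \Pic(X)\cap \birKbar_X\cap \cC_X$, there exists a birational irreducible symplectic variety $f\colon X\dashrightarrow X'$ such that the associated line bundle $A'\coloneqq (f^{-1})^*A\in \Pic(X')$ is nef. Because $q(A')=q(A)>0$ (the Beauville--Bogomolov form is preserved under the birational identification, since by Lemma \ref{lem:isocodim1} it is determined on an open set of codimension $\geq 2$) and $A'$ is nef, it follows (as recalled in Section \ref{sec:basicfacts}) that $A'$ is big and nef.

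Next I would apply Kawamata--Viehweg vanishing on $X'$. Since $X'$ is an irreducible symplectic variety, $K_{X'}\cong \dO_{X'}$, so $H^i(X',A')=0$ for all $i>0$. Therefore $h^0(X',A')=\chi(X',A')$.

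It remains to identify $h^0(X,A)$ with $h^0(X',A')$ and $\chi(X,A)$ with $\chi(X',A')$. For the $h^0$-part I would use Lemma \ref{lem:isocodim1}: pick open subsets $U\subseteq X$, $U'\subseteq X'$ whose complements have codimension $\geq 2$ and such that $f|_U\colon U\xrightarrow{\sim} U'$ matches $A|_U$ with $A'|_{U'}$. Since $X$ and $X'$ are smooth, Hartogs' theorem gives $H^0(X,A)=H^0(U,A|_U)$ and $H^0(X',A')=H^0(U',A'|_{U'})$, so these global sections agree. For the $\chi$-part I would use Theorem \ref{thm:RRISV}: since birational irreducible symplectic varieties are deformation equivalent (by Huybrechts), the Riemann--Roch constants $b_i$ coincide for $X$ and $X'$, and as noted $q(A)=q(A')$, so $\chi(X,A)=\chi(X',A')$. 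Combining these gives $h^0(X,A)=\chi(X,A)$.

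The main obstacle is the bookkeeping needed to justify the equalities $q(A)=q(A')$ and $\chi(X,A)=\chi(X',A')$ across the birational map; both reduce to the fact that birational ISVs share the same second cohomology with its BBF form and the same deformation type, which is standard but deserves explicit mention. Once these identifications are in place, the argument is just Kawamata--Viehweg plus Hartogs.
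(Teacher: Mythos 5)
Your proposal is correct and follows essentially the same route as the paper: reduce to a nef model via Proposition \ref{prop:nefonbiratmodel}, note $q(A')=q(A)>0$ so $A'$ is big and nef, apply vanishing there, and transfer $h^0$ via the codimension-two isomorphism of Lemma \ref{lem:isocodim1} and $\chi$ via deformation equivalence of birational models. The paper's proof is exactly this, so no further comment is needed.
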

\begin{proof}
  Since $A \in \Pic(X)\cap \birKbar_X\cap\cC_X$, Proposition \ref{prop:nefonbiratmodel} implies that  there exists a birational irreducible symplectic variety $X'$ such that the corresponding
  line bundle $A'\in \Pic(X')$ is nef.  Then  $A'$ is indeed big and
  nef since $q(A')=q(A)>0$.
Consequently
\begin{align*}
  h^0(X,A)=h^0(X',A')=\chi(X',A')=\chi(X,A),
\end{align*}
where the first equality uses that $X$ and $X'$ are
isomorphic away from codimension two by Lemma \ref{lem:isocodim1}.
The second equality exploits that  $A'$ is big and nef, and therefore satisfies
Kodaira vanishing, and the last equality holds since $(X,M)$ and $(X',M')$ are deformation
equivalent (see \cite[Theorem 4.6]{Huybrechts:HK:basic-results}).
\end{proof}

\section{Reflections for  prime exceptional divisors}\label{sec:reflectionsinPED}
In this section we use Huybrechts' description of $\birKbar_X$ and Markman's results on reflections for
prime exceptional  divisors to prove Proposition
\ref{prop:RefltoBK}, which shows that arbitrary elements $\alpha \in \NS(X)_\bQ \cap \del \cC_X$ can be
moved into $\birKbar_X$, by a series of such reflections.

Let us first introduce some notations.

\begin{definition}
  \begin{enumerate}
  \item A divisor $D \in \Div(X)$ is called {\it prime exceptional divisor} if $D$ is reduced and
    irreducible and satisfies $q(D)<0$.
  \item For a prime exceptional divisor $D$ define the reflection $R_D\in O(H^2(X, \bQ))$ as
    \begin{equation*}
      R_D(\alpha)\coloneqq  \alpha - \frac{2(D,\alpha)_q}{q(D)}  \, D.
    \end{equation*}
  \end{enumerate}
\end{definition}

\begin{remark}
Several statements in the previous sections involved prime exceptional divisors. In particular Proposition
\ref{prop:boucksom}.
\end{remark}

For the proof of Proposition \ref{prop:RefltoBK}, we will need the following result of Markman:
\begin{proposition}[{\cite[Proposition 6.2]{Markman11}}]\label{prop:RisMonOP}
  For any prime exceptional divisor $D$ the reflection $R_D$ restricts to an integral morphism. In fact, it is
  a monodromy operator preserving the Hodge structure.
\end{proposition}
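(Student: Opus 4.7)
The plan is to follow a deformation-theoretic strategy: reduce to a very general member of a suitable deformation family where the reflection can be realized geometrically as a birational symmetry, and then propagate the statement back along the deformation.

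First, I would use the deformation theory of the pair $(X, \dO_X(D))$ to put $(X,D)$ in a one-parameter family $\cX \to B$ over a small disk in which the Hodge class of $D$ remains algebraic throughout. At a very general fiber $(X_t, D_t)$, the N\'eron--Severi group reduces to $\bZ \cdot D_t$. Since $q(D_t) = q(D) < 0$ is preserved and $D$ is effective, semi-continuity of $h^0$ together with Proposition \ref{prop:boucksom} ensures that $D_t$ is again an effective, uniruled, irreducible divisor of negative square, i.e.~a prime exceptional divisor. Because being a monodromy operator and preserving the Hodge structure are stable under parallel transport along the family, it will suffice to establish the statement at such a very general $X_t$.

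On this very general $X_t$, the ray $\bR_{\geq 0}\cdot D_t$ is extremal in the pseudo-effective cone, so $D_t$ induces a divisorial contraction $\pi\colon X_t \to \overline{X}_t$ onto a singular symplectic variety with exceptional divisor $D_t$. The heart of the argument is to produce a dual symplectic resolution $\pi'\colon X_t' \to \overline{X}_t$ by another irreducible symplectic variety, together with the induced birational map $f\colon X_t \dashrightarrow X_t'$, which is an isomorphism in codimension one by Lemma \ref{lem:isocodim1}, and to check that the isometry of $H^2$ induced by $f$ coincides with the reflection $R_{D_t}$. This Mukai-type flop is the main obstacle: it requires a careful analysis of the fibration structure on $D_t$ as a uniruled divisor over a symplectic base (in the spirit of Boucksom's description) in order to construct the opposite symplectic resolution of $\overline{X}_t$. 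Once this flop is produced, $f_*$ is by construction a parallel transport operator between $H^2(X_t,\bZ)$ and $H^2(X_t',\bZ)$, hence a Hodge monodromy operator, and a short computation using the intersection form and the explicit structure of $D_t$ identifies it with $R_{D_t}$.

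Finally, transporting back to $X$: composing $f_*$ with parallel transport along $\cX \to B$ yields a Hodge monodromy operator on $H^2(X,\bZ)$ which, under the identifications of the second cohomology given by parallel transport, equals $R_D$. Integrality of $R_D$ then follows automatically, since monodromy operators act on integral cohomology, and Hodge-preservation is inherited because $D$ remains of type $(1,1)$ throughout the family. The substantive work is concentrated entirely in the geometric construction of the dual resolution in the middle step; the rest is formal consequence of deformation invariance.
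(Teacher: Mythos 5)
The paper offers no proof of this statement: it is imported verbatim as \cite[Proposition 6.2]{Markman11}, so the only fair comparison is with Markman's actual argument. Your first reduction (deform the pair $(X,D)$ so that $\NS(X_t)=\bZ\cdot D_t$ and contract $D_t$) does match the opening of Markman's proof. But the heart of your argument fails. You propose to realize $R_{D_t}$ as the map on $H^2$ induced by a birational map $f\colon X_t\dashrightarrow X_t'$ between two symplectic resolutions of $\overline{X}_t$. By Lemma \ref{lem:isocodim1} such an $f$ is an isomorphism in codimension one, so $f^*$ sends the class of every irreducible effective divisor to the class of its strict transform; in particular it preserves the effective cone and fixes the pseudo-effective extremal ray spanned by $D_t$. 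The reflection, however, satisfies $R_{D_t}(D_t)=-D_t$, which is not effective. Hence no birational modification that is an isomorphism in codimension one can induce $R_{D_t}$ on $H^2$, and the ``dual resolution / Mukai-type flop'' step cannot be repaired. The mechanism Markman actually uses is genuinely a monodromy of a \emph{family}: by Namikawa's deformation theory of the contraction $\pi\colon X_t\to \overline{X}_t$, the natural map $\Def(X_t)\to\Def(\overline{X}_t)$ is a branched Galois covering, and the monodromy of the pulled-back family around the branch divisor is computed (using the classification of the generic fibre of $\pi$ as one rational curve or an $A_2$-pair, which also yields the constraints on $(q(D),\div(D))$ and hence integrality) to be exactly $R_{D_t}$.

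Two smaller points. First, your appeal to ``semi-continuity of $h^0$'' to keep $D_t$ effective at the very general fibre goes the wrong way: upper semicontinuity bounds $h^0$ at the special point from \emph{below} by nearby values, not the reverse. That prime exceptional divisors stay effective along their Noether--Lefschetz locus is itself a nontrivial theorem of Markman (the paper cites it later as \cite[Proposition 5.2]{MarkmanPED}), so you cannot use it as a formality here. Second, ``preserving the Hodge structure is stable under parallel transport'' is not literally true since the Hodge structure varies; the correct statement is that a reflection in a class of type $(1,1)$ automatically preserves the Hodge structure, which is why only the monodromy claim needs to be transported back to $X$.
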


\begin{proposition}\label{prop:RefltoBK}
  Let $0\neq \alpha \in \NS(X)_\bQ\cap \overline{\cC_X}$  then there exists a composition $R=R_{D_{k-1}}\circ
  \dots \circ R_{D_0}$ of reflections
  $R_{D_i}$ associated to prime exceptional divisors $D_i$,
  such that $R(\alpha)\in \birKbar_X$.
\end{proposition}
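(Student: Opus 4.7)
The plan is iterative. Starting from $\alpha$, as long as the current vector lies outside $\birKbar_X$, I will exhibit a prime exceptional divisor $D$ with $(\alpha, D)_q < 0$, replace $\alpha$ by $R_D(\alpha)$, and argue that after finitely many such reflection steps the process terminates inside $\birKbar_X$.

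\textbf{Setup and a single reflection step.} First I would reduce to the integral case: since $\birKbar_X$ is a cone and every $R_D$ is $\bQ$-linear, clearing denominators lets me assume $\alpha \in \NS(X)$. Combining Theorem \ref{thm:descrofbirKbar} with Remark \ref{rem:negsquareinThm} and Proposition \ref{prop:boucksom} gives the cleaner characterization
\begin{equation*}
\birKbar_X = \{\beta \in \overline{\cC_X} \mid (\beta, D)_q \geq 0 \text{ for every prime exceptional divisor } D\},
\end{equation*}
so whenever the current $\alpha$ fails to lie in $\birKbar_X$, the obstruction is witnessed by exactly the kind of divisor I am allowed to reflect in. To check that a reflection step stays inside $\NS(X) \cap \overline{\cC_X}$, I would invoke Proposition \ref{prop:RisMonOP} for integrality, and for preservation of the positive cone observe that $R_D$ is a $q$-isometry fixing $D^\perp$ pointwise; since $q(D) < 0$, the form restricted to $H^{1,1}(X,\bR) \cap D^\perp$ has signature $(1, \rho - 2)$, so $D^\perp \cap \cC_X \neq \emptyset$, which forces $R_D(\cC_X) = \cC_X$ rather than $-\cC_X$.

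\textbf{Termination via a monovariant.} The main point is to find a quantity that strictly decreases under each reflection but cannot decrease indefinitely. Fix an ample class $H \in \NS(X)$. A direct expansion gives
\begin{equation*}
(R_D(\alpha), H)_q = (\alpha, H)_q - \frac{2(D, \alpha)_q}{q(D)}\,(D, H)_q,
\end{equation*}
which is strictly smaller than $(\alpha, H)_q$ at each step: the combination $(D,\alpha)_q < 0$ and $q(D) < 0$ produces a positive coefficient, while $(D, H)_q > 0$ holds for $H$ ample and $D$ a nonzero effective divisor. On the other hand, $(\beta, H)_q > 0$ for every nonzero $\beta \in \overline{\cC_X}$, directly from the signature of $q$ on $\NS(X)_\bR$ (the boundary case $q(\beta) = 0$ being handled by noting that $\overline{\cC_X} \cap \beta^\perp = \bR_{\geq 0}\,\beta$). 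Since all iterates are integral, the sequence of pairings $(R_{D_k} \circ \cdots \circ R_{D_0}(\alpha), H)_q$ takes values in $\bZ_{>0}$ and strictly decreases, so it must stabilize after finitely many steps; stabilization can only occur once the current vector has landed in $\birKbar_X$, and reading off the composition of reflections produced along the way gives the required $R$.

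\textbf{Main obstacle.} The reflection mechanics and the availability of a suitable $D$ at each stage are essentially immediate from Markman's integrality result and the quoted description of $\birKbar_X$. The step I expect to need the most care is the termination argument, and specifically establishing strict positivity of $(\beta, H)_q$ for nonzero $\beta \in \overline{\cC_X}$ (together with the positivity $(D, H)_q > 0$ for $H$ ample and $D$ a nonzero effective divisor); this is the ingredient that converts a monotonicity statement into actual termination in the integer lattice.
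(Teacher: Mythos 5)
Your proposal is correct and follows essentially the same route as the paper: reduce to an integral class, use the description of $\birKbar_X$ via prime exceptional divisors to find a witness $D_i$ with $(\alpha_i,D_i)_q<0$ whenever the current class is not yet in $\birKbar_X$, and terminate via the strictly decreasing sequence of positive integers $(\alpha_i,h)_q$ for a fixed ample class $h$. The only difference is that you explicitly verify two points the paper leaves implicit (that reflections preserve $\overline{\cC_X}$, and that nonzero classes in $\overline{\cC_X}$ pair strictly positively with an ample class), which is a welcome but not substantive addition.
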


\begin{proof} This proof is similar to the analogue for K$3$ surfaces, as presented in
  \cite[Remark VIII.2.12]{HuybrechtsK3}.

  By passing to a multiple of $\alpha$, we may assume that $\alpha \in H^2(X,\bZ)$ is an integral
  element.   Set $\alpha_0\coloneqq \alpha$.
  Fix an ample class $h\in H^2 (X,\bZ)$.
  For any element $\alpha_i\in H^2(X,\bZ)\cap \overline\cC_X$, the Beauville--Bogomolov pairing
  $(\alpha_i,h)_q$ is a positive integer. If $\alpha_i\notin \birKbar_X$, then by Theorem
  \ref{thm:descrofbirKbar} and Remark 
  \ref{rem:negsquareinThm} there exists a prime exceptional divisor $D_i$ with $(\alpha_i,D_i)_q <0$.
  Note that $(D_i,h)_q>0$ and $(\alpha_i,h)_q>0$, since $h$ is ample. 
  Set $\alpha_{i+1}\coloneqq R_{D_i}(\alpha_i)$, which is an element in the integral lattice
  $H^2(X,\bZ)$, since $R_{D_i}$ is an integral morphism by Proposition \ref{prop:RisMonOP}. Observe that
  \begin{align*}
    (\alpha_{i+1},h)_q
    = \big(R_{D_i}(\alpha_i),h \big)_q
    &= \big(\alpha_i- \frac{2(D_i,\alpha_i)_q}{q(D_i)}  D_i\, ,h \big)_q\\
    &=(\alpha_i,h)_q - \underbrace{\frac{2(D_i,\alpha_i)_q}{q(D_i)}\cdot (D_i,h)_q}_{>0}
    < (\alpha_i,h)_q \,.
  \end{align*}
  If $\alpha_{i+1}\notin \birKbar_X$, repeat the above for $\alpha_{i+1}$. 
  Since $(\alpha_0,h)> (\alpha_1,h)>(\alpha_2,h)> \dots$ is a descending sequence of positive integers,
  this procedure needs to stop for some $k\in \bN$, which implies that $\alpha_k\in \birKbar_X$. Set
  $R\coloneqq   R_{D_{k-1}} \circ \dots \circ R_{D_0}$. This concludes the proof, since
  $R(\alpha)=\alpha_k \in \birKbar_X$. 
\end{proof}

Furthermore, we will need the following inequality for exceptional prime divisors:

\begin{lemma}[{follows from \cite[Lemma 3.7]{MarkmanPED}}]\label{lem:ineqforPED}
  Let $X$ be an irreducible symplectic variety. Suppose $D\in \Pic(X)$ is an irreducible (and reduced)
  divisor with $q(D)<0$. Recall that $\div(D)\coloneqq \gcd \{(\alpha,D)_q \mid \alpha\in H^2(X,\bZ)\}$.
  Then $q(D)\,|\,2\div(D)$. In particular
  \begin{equation*}
      -\frac{1}{2} q(D) \leq \div(D).
  \end{equation*}
\end{lemma}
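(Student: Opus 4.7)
The plan is to deduce both assertions from the integrality of the reflection $R_D$. Since $D$ is reduced, irreducible, and satisfies $q(D)<0$, it is a prime exceptional divisor in the sense of Section \ref{sec:reflectionsinPED}, so Proposition \ref{prop:RisMonOP} guarantees that $R_D$ preserves the integral lattice $H^2(X,\bZ)$.

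First, I would apply the reflection formula to an arbitrary $\alpha \in H^2(X,\bZ)$:
\begin{equation*}
R_D(\alpha) \;=\; \alpha - \frac{2(\alpha,D)_q}{q(D)}\, D \;\in\; H^2(X,\bZ).
\end{equation*}
This forces the class $\frac{2(\alpha,D)_q}{q(D)}\, D$ to be integral for every $\alpha$. Writing $D = k\, D_0$ with $D_0 \in H^2(X,\bZ)$ primitive and extracting the coefficient of $D_0$ produces divisibility constraints between $q(D)$ and the integers $2(\alpha,D)_q$. Letting $\alpha$ range over $H^2(X,\bZ)$ and taking the greatest common divisor then yields $q(D)\,\mid\,2\div(D)$.

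For the ``in particular'' part, note that $\div(D)$ is a positive integer because $D$ is a non-trivial effective divisor, while $q(D)<0$ by hypothesis. Combined with the divisibility just established, this forces $|q(D)|\leq 2\div(D)$, which is exactly $-\frac{1}{2}\,q(D)\leq \div(D)$.

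The main obstacle lies in the case when the class of $D$ is not primitive in $H^2(X,\bZ)$: the integrality of $R_D$ most naturally produces divisibility information for the primitive representative $D_0$ rather than for $D$ itself, so one must carefully compare the pairs $(q(D),\div(D))$ and $(q(D_0),\div(D_0))$ to arrive at the clean statement $q(D)\,\mid\,2\div(D)$. This refinement is exactly what \cite[Lemma 3.7]{MarkmanPED} supplies, and I would invoke it at this step rather than redo the bookkeeping from scratch.
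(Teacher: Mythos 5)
The paper offers no argument for this lemma beyond the citation of Markman's Lemma 3.7, and your proposal ultimately rests on that same citation, so the two approaches coincide where it matters. Your preliminary reduction via the integrality of $R_D$ (Proposition \ref{prop:RisMonOP}) is sound and already proves $q(D)\mid 2\div(D)$ when the class of $D$ is primitive, and you correctly flag that for imprimitive classes (which genuinely occur, e.g.\ the exceptional divisor of $\mathrm{Hilb}^n(S)$, whose class is twice a primitive one) the reflection argument only gives $q(D)\mid 2k\,\div(D)$ for $D=kD_0$, so the appeal to \cite[Lemma 3.7]{MarkmanPED} is genuinely needed at that point rather than merely convenient.
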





\section{Main Theorem}\label{sec:main}
In this section, we state the main theorem of this article and deduce some immediate consequences.

\bigskip

For the assumptions of the theorem in its most
general form, we need to introduce some notation first.

Let $X$ be an irreducible symplectic variety. Recall that by the Riemann--Roch theorem for irreducible
symplectic varieties (Theorem \ref{thm:RRISV})
 there exist $b_i\in \bQ$  such that for each $L\in \Pic(X)$
\begin{equation*}
  \chi(X,L)=\sum_{i=0}^n b_i q(L)^i.
\end{equation*}

\begin{definition}
  For a given irreducible symplectic variety $X$, denote the polynomial appearing in the Riemann--Roch formula
  by $RR_X(x)\coloneqq \sum_{i=0}^n b_i x^i$, where the $b_i$ are as above.
\end{definition}

 The polynomial $RR_X$ only depends on the deformation type of $X$.

\begin{remark}
  For the main theorem (Theorem \ref{thm:main-thm}), we will need to assume that $RR_X$ is strictly monotonic. Note that without any assumptions, it is not even clear that for an arbitrary irreducible symplectic variety $X$ and
  an ample line bundle $H\in \Pic(X)$ there are non-trivial global sections in $H^0(X,H)$. 
  Our assumptions on $RR_X$ will exclude this problem.
\end{remark}

Another statement which we will frequently need for the main theorem is that the following conjecture holds for the
variety $X$ which we consider:
\begin{definition}
  A line bundle $L\in \Pic(X)$ is said to {\it induce a rational Lagrangian fibration to $\bP^n$} if there exists a
  birational map $f\colon X \dashrightarrow X'$ to an irreducible symplectic variety $X'$, and 
  a fibration $\phi\colon X'\surj \bP^n$, such
  that the birational transform $L'$ of $L$ on $X'$ is the pullback of an ample line bundle.
\end{definition}
\begin{conjecture}\label{conj:strongRLF}
  Let $X$ be an irreducible symplectic variety and $0\neq L \in \Pic(X)\cap \birKbar_X$ be a primitive line bundle with
  $q(L)=0$. Then $\dim h^0(X,L)=n+1$ and $|L|$ induces a birational Lagrangian fibration to $\bP^n$.
\end{conjecture}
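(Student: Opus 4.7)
The statement above is the hyperkähler SYZ conjecture for primitive square-zero elements of the closure of the birational Kähler cone, and is unresolved in full generality; it is precisely the ``famous conjecture'' alluded to in the introduction. I would follow the standard two-step strategy: a routine reduction to the nef case, followed by the genuine problem of semi-ampleness.

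\emph{Step 1: reduction to the nef case.} Since $L \in \Pic(X) \cap \birKbar_X$ is primitive with $q(L) = 0$, and assuming $b_2(X) \neq 4$ (which holds for every known deformation type), Proposition \ref{prop:nefonbiratmodel} produces a birational irreducible symplectic variety $X'$ on which the corresponding line bundle $L'$ is nef. By Lemma \ref{lem:isocodim1}, $X$ and $X'$ agree in codimension one, so $H^0(X, L) = H^0(X', L')$ and any rational Lagrangian fibration induced by $|L'|$ transports back to $X$ via $f^{-1}$. Hence it suffices to treat the case of a primitive nef line bundle $L$ with $q(L) = 0$.

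\emph{Step 2: semi-ampleness and Matsushita.} The core task is to show $L$ is semi-ample. Granted this, a high multiple $|mL|$ defines a surjective morphism $\phi \colon X \to B$ with connected fibers onto a normal projective variety, and Matsushita's theorem on fibrations of irreducible symplectic manifolds forces $\dim B = n$, makes $\phi$ a Lagrangian fibration, and constrains $B$ to have the rational cohomology of $\bP^n$. In the known cases one moreover has $B \cong \bP^n$, so primitivity of $L$ identifies it with $\phi^* \dO_{\bP^n}(1)$, yielding
\begin{equation*}
h^0(X, L) = h^0(\bP^n, \dO_{\bP^n}(1)) = n+1,
\end{equation*}
and $|L|$ itself realises the (rational) Lagrangian fibration demanded by the conjecture.

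\emph{Main obstacle.} Semi-ampleness of a non-big nef line bundle of square zero is the content of the SYZ conjecture and is genuinely hard: since $L$ fails to be big, Kawamata--Shokurov base-point-free theorems do not apply, and Matsushita's criterion only requires $\kappa(X, L) \geq 1$, so the real obstacle is producing a single non-zero section of some multiple. In the K$3^{[n]}$-type case this was achieved by Bayer--Macrì (building on Markman's Hodge-theoretic classification and on Bridgeland stability) by realising $X$ as a moduli space of stable objects on the underlying K3 surface and matching chambers of $\Movbar(X)$ with walls in the stability manifold; the Kum$^n$-type case is the analogous theorem of Yoshioka on moduli of sheaves on abelian surfaces. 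For the O'Grady examples and for hypothetical further deformation types no such moduli-theoretic description is currently available, which is exactly why the present article takes this statement as a hypothesis rather than attempting to prove it.
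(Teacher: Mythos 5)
This statement is a conjecture in the paper, not a theorem: the paper gives no proof of it, only the remarks that it holds for K3$^{[n]}$-type and Kum$^n$-type by Theorem \ref{thm:Matsushita} and that Proposition \ref{prop:nefonbiratmodel} supplies the required nef birational model whenever $b_2(X)\neq 4$. Your assessment --- that the reduction to the nef case is routine, that the genuine open problem is producing sections (semi-ampleness) of a square-zero nef class, and that the known cases rest on moduli-theoretic descriptions unavailable in general --- accurately reflects how the paper treats the statement, namely as a hypothesis rather than something it proves.
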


This is a classical conjecture on irreducible symplectic varieties. There is a  weaker version which does not make any predictions on the base space of the Lagrangian
fibration. For its history we refer to
\cite[p.~3]{VerbitskySYZ}.
Note however, that the base space of a Lagrangian fibration is automatically isomorphic to  
$\bP^n$ if it is smooth (see \cite{Hwang08} and \cite{GrebLehn14}). Smoothness of the base is not known in
general, but also conjectured.
Furthermore, the presented statement of the conjecture implies that the pullback $\phi^*\dO(1)$ is
primitive. This property is conjectured to hold in general and  has been subject of
\cite{KamenovaVerbitsky16}.
The version of the conjecture stated here also implies the existence of the birational model $X'$ on which
$L'$ is nef. By 
Proposition \ref{prop:nefonbiratmodel} this is automatic whenever $b_2(X)\neq 4$.

Conjecture \ref{conj:strongRLF} is known to hold in many cases:
\begin{theorem}[{\cite[Corollary 1.1]{Matsushita13}}]\label{thm:Matsushita}
  Conjecture \ref{conj:strongRLF} holds for all
  irreducible symplectic varieties of K3$^{[n]}$-type and of Kum$^n$-type.
\end{theorem}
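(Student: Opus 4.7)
The plan is to reduce the statement of Conjecture \ref{conj:strongRLF} for these deformation types to Matsushita's SYZ-type result for K3$^{[n]}$- and Kum$^n$-type, packaging it with the nef-model reduction from Section \ref{sec:cones} and the known smoothness/primitivity of the base.

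First, I would note that both deformation types satisfy $b_2(X) \neq 4$ (namely $b_2 = 23$ for K3$^{[n]}$-type and $b_2 = 7$ for Kum$^n$-type). Hence Proposition \ref{prop:nefonbiratmodel} applies: given $L \in \Pic(X) \cap \birKbar_X$ primitive with $q(L)=0$, there is a birational irreducible symplectic variety $f \colon X \dashrightarrow X'$ such that the strict transform $L'$ of $L$ is nef on $X'$, with $q(L') = 0$. Since birational equivalence preserves the deformation type (\cite[Theorem 4.6]{Huybrechts:HK:basic-results}), $X'$ is again of K3$^{[n]}$- respectively Kum$^n$-type, and $L'$ remains primitive.

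Next, I would apply Matsushita's theorem in the form of \cite[Corollary 1.1]{Matsushita13} to $(X', L')$: for these two deformation types, any primitive nef line bundle of square zero is in fact semiample, and the associated morphism $\phi \colon X' \to B$ is a Lagrangian fibration onto a normal base of dimension $n$. The smoothness of $B$ (known in these deformation types, e.g.\ via the moduli-theoretic descriptions of Bayer--Macrì and Yoshioka) combined with Hwang's theorem (\cite{Hwang08}, with the extension of Greb--Lehn \cite{GrebLehn14}) identifies $B \cong \bP^n$. Moreover, in both deformation types the pullback $\phi^*\dO_{\bP^n}(1)$ is primitive (see \cite{KamenovaVerbitsky16}), so by primitivity of $L'$ we must have $L' = \phi^*\dO_{\bP^n}(1)$. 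This is precisely the condition that $|L|$ induces a rational Lagrangian fibration to $\bP^n$.

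Finally, to count global sections, I would use that $X$ and $X'$ agree outside codimension two (Lemma \ref{lem:isocodim1}), so $h^0(X,L) = h^0(X', L')$. Then $h^0(X', L') = h^0(\bP^n, \phi_*\phi^*\dO_{\bP^n}(1)) = h^0(\bP^n, \dO_{\bP^n}(1)) = n+1$ because the general fibre of a Lagrangian fibration is connected (in fact an abelian variety), so $\phi_*\dO_{X'} = \dO_{\bP^n}$ by Stein factorization and normality of the base.

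The main obstacle is the semiampleness statement inside Matsushita's corollary, which is the genuinely deep input (and whose proof relies on the precise geometry of the two deformation types via moduli on K3 and abelian surfaces). Everything else in the above argument is a structural consequence: Proposition \ref{prop:nefonbiratmodel} handles the passage from birationally to honestly nef, Hwang--Greb--Lehn identifies the base, Kamenova--Verbitsky supplies the primitivity needed to match $L'$ with $\dO_{\bP^n}(1)$, and the section count is then an immediate computation on $\bP^n$.
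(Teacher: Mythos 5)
Your proposal is correct and ultimately rests on the same essential input as the paper, which offers no argument of its own beyond citing \cite[Corollary 1.1]{Matsushita13}: the deep content is Matsushita's semiampleness result for isotropic classes on K3$^{[n]}$- and Kum$^n$-type, and the surrounding scaffolding you supply (the nef-model reduction via Proposition \ref{prop:nefonbiratmodel}, the identification of the base via Hwang and Greb--Lehn, the primitivity matching, and the section count $h^0 = n+1$ via the projection formula) is a faithful unpacking of what that corollary already packages. The only cosmetic point is that primitivity of $L'$ alone forces $L' = \phi^*\dO_{\bP^n}(1)$ once $L'$ is known to be a pullback of $\dO_{\bP^n}(k)$, so the appeal to \cite{KamenovaVerbitsky16} is not strictly needed there.
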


\begin{remark}
  Trivially, if $X$ satisfies $\Pic(X) \cap \del\cC_X= 0$, 
   Conjecture \ref{conj:strongRLF} automatically holds for $X$ (since there is no such line bundle $L$).
\end{remark}

We can now formulate the main theorem in its most general form:
\begin{theorem}\label{thm:main-thm}
  Let $X$ be a $2n$-dimensional irreducible symplectic variety  for which ${RR_X}|_{\bZ_{\geq 0}}$ is
  strictly monotonic and such that Conjecture \ref{conj:strongRLF} holds for $X$. Consider a big and nef line
  bundle $H \in \Pic(X)$.
  Then $H$ has a non-trivial base divisor if and only if there exists an irreducible reduced divisor $F$
  of negative square such that $H$ is of the form $H=mL+F$, where $m\geq 2$, $L$ is a
  primitive movable line bundle with $q(L)=0$ and  $(L,F)_q>0$, such that $RR_X(q(H))=\binom{m+n}{n}$.
  In this case $F$ is exactly the fixed divisor of $H$.
\end{theorem}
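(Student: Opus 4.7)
The strategy is to handle the two directions of the biconditional separately. The backward direction reduces to a section count via Conjecture \ref{conj:strongRLF}, whereas the forward direction requires a careful analysis of the movable/fixed decomposition of $H$.

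For the ``if'' direction, I would assume $H = mL + F$ as in the statement. Conjecture \ref{conj:strongRLF} applied to $L$ (primitive in $\birKbar_X$ with $q(L)=0$, which lies in $\birKbar_X$ by Corollary \ref{cor:MovbirKbar}) yields a birational model $X \dashrightarrow X'$ with a Lagrangian fibration $\phi\colon X' \to \bP^n$ such that $L'=\phi^*\dO_{\bP^n}(1)$. Then $h^0(X,mL)=h^0(X',mL')=\binom{m+n}{n}$, and $mL$ has no fixed divisor. The inclusion $|mL|+F\subseteq |H|$ gives $h^0(X,H)\geq \binom{m+n}{n}$, while Kodaira vanishing applied to the big and nef $H$ yields $h^0(X,H)=\chi(X,H)=RR_X(q(H))=\binom{m+n}{n}$. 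The resulting equality forces the multiplication by $s_F$ map $H^0(X,mL)\to H^0(X,H)$ to be an isomorphism, so every section of $H$ factors through $s_F$, and since $mL$ has no fixed component, $F$ is exactly the fixed divisor of $H$.

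For the ``only if'' direction, let $F'$ denote the fixed divisor of $H$ and set $M\coloneqq H-F'$. Then $M$ is movable, hence $M \in \birKbar_X$ by Corollary \ref{cor:MovbirKbar}, and $h^0(X,M)=h^0(X,H)=RR_X(q(H))$. The first key step is to show $q(M)=0$: if instead $q(M)>0$, then Proposition \ref{prop:nefonbiratmodel} and Lemma \ref{lem:h0inbirKbar} give $h^0(X,M)=RR_X(q(M))$, so strict monotonicity of $RR_X$ forces $q(M)=q(H)$, i.e.~$2(M,F')_q + q(F')=0$. Together with $(H,F')_q\geq 0$ from Corollary \ref{cor:BBforbig+nef}, this forces $(M,F')_q=0$ and $q(F')=0$; the signature of $q$ on $H^{1,1}(X,\bR)$ together with $q(M)>0$ then implies $F'=0$, contradicting the existence of a base divisor. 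So $q(M)=0$, and since $RR_X(q(H))>RR_X(0)=n+1$ by Proposition \ref{prop:chi0} and strict monotonicity, also $M\neq 0$. Writing $M=mL$ with $L$ the primitive integral generator of the ray $\bR_{\geq 0}\cdot M\subseteq \Pic(X)_\bR$, Conjecture \ref{conj:strongRLF} applies to $L$ and gives that $L$ is movable with $h^0(X,mL)=\binom{m+n}{n}$, whence $RR_X(q(H))=\binom{m+n}{n}$. The bound $m\geq 2$ follows because $m=1$ would yield $RR_X(q(H))=n+1=RR_X(0)$, forcing $q(H)=0$ by monotonicity and contradicting bigness. The positivity $(L,F')_q>0$ follows from a second signature argument: equality would place $F'$ in $L^\perp\subseteq H^{1,1}$, where $q$ is negative semidefinite (since $L\in \partial \cC_X$), contradicting $q(F')=q(H)>0$.

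The hard part will be to show that $F'$ is irreducible and reduced with $q(F')<0$. My approach would be to transfer the decomposition $F'=\sum a_iF_i'$ to the Lagrangian fibration model $X'$ using Lemma \ref{lem:isocodim1}, so that every section of $H'$ factors as $\phi^*P\cdot s_{F''}$ for some $P\in H^0(\bP^n,\dO(m))$. Each irreducible component of $F'$ must then fail to dominate $\bP^n$ under $\phi$, since otherwise one could find a union of fibers not containing it, contradicting its being in the base locus. Combined with Proposition \ref{prop:boucksom} (irreducible divisors of negative square are uniruled) and a careful analysis of which effective divisors can be contracted along a Lagrangian fibration, one expects each $F_i'$ to satisfy $q(F_i')<0$ and hence be prime exceptional. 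To exclude multiple components or multiplicities $a_i\geq 2$, I would invoke the reflection theory of Section \ref{sec:reflectionsinPED}: reflections in prime exceptional divisors are monodromy operators (Proposition \ref{prop:RisMonOP}), so combining Proposition \ref{prop:RefltoBK} with the divisibility bound of Lemma \ref{lem:ineqforPED} should allow a comparison of the tight section count $\binom{m+n}{n}=RR_X(q(H))$ against the contributions forced by any extra or repeated components. Ruling out all configurations except a single prime exceptional divisor of multiplicity one is the delicate technical core of the argument.
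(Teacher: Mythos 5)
The first half of your forward direction and your backward direction are essentially the paper's argument, with a few harmless variations: you replace the paper's appeal to Lemma \ref{lem:FsquareNeg} (nonzero fixed divisors have negative square) by signature arguments in two places (to get $F'=0$ when $q(M)>0$, and to get $(L,F')_q>0$), and both routes are valid. The step $q(M)=0$, the bound $m\geq 2$, and the identity $RR_X(q(H))=\binom{m+n}{n}$ via the Lagrangian fibration and Lemma \ref{lem:isocodim1} all match the paper's Lemma \ref{lem:Msquare0} and the first part of its proof of Theorem \ref{thm:main-thm}.

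The genuine gap is exactly where you say the ``delicate technical core'' lies: you never prove that the fixed part is a single irreducible reduced divisor of negative square, and the strategy you sketch (components not dominating $\bP^n$, contractions along the fibration, reflections) is not the paper's mechanism and is not carried out; in the paper the reflection theory of Section \ref{sec:reflectionsinPED} enters only through the proof of Lemma \ref{lem:FsquareNeg}, not through any component count. The paper's actual argument is concrete: write $F'=\sum a_iF_i$, use $(H,F')_q\geq 0$ and $q(F')<0$ to find a component $F_{i_0}$ with $(M,F_{i_0})_q>0$, and form the auxiliary class $H'\coloneqq M+F_{i_0}$. The divisibility inequality $-q(F_{i_0})\leq 2\div(F_{i_0})\leq 2(L,F_{i_0})_q$ from Lemma \ref{lem:ineqforPED} then yields $q(H')>0$ and $(H',D)_q\geq 0$ for every irreducible effective divisor $D$, so $H'\in\birKbar_X\cap\cC_X$ by Theorem \ref{thm:descrofbirKbar}. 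Since $h^0(X,H)=h^0(X,H')=h^0(X,M)$, Lemma \ref{lem:h0inbirKbar} and the injectivity of $RR_X|_{\bZ_{\geq 0}}$ force $q(H)=q(H')$, and a short bilinear computation gives $q(F'-F_{i_0})=0$, whence $F'-F_{i_0}=0$ by Lemma \ref{lem:FsquareNeg}. Without this step (or a genuine substitute), your argument only shows $H=mL+F'$ with $F'$ some effective fixed divisor, and neither the irreducibility and reducedness of $F'$ nor $q(F')<0$ --- which are part of the asserted equivalence --- is established.
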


Let us first note some immediate consequences, before we give the proof in Section \ref{sec:proofofmainthm}.

\begin{corollary}\label{cor:delemptynodivcomp}
  If $X$ is an irreducible symplectic variety  such that
  $\restr{RR_X}{\bZ_{\geq 0}}$ is strictly monotonic and with $\Pic(X)\cap\del\cC_X=0$, then no big and nef line
  bundle on $X$
  has fixed components.  
\hfill $\square$
\end{corollary}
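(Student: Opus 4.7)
The plan is to derive this corollary from the Main Theorem (Theorem \ref{thm:main-thm}) by contradiction. First I need to verify that the hypotheses of the main theorem are satisfied for $X$. Strict monotonicity of $\restr{RR_X}{\bZ_{\geq 0}}$ is assumed, so only Conjecture \ref{conj:strongRLF} remains to be checked. But this is immediate from the Remark following the conjecture: since $\Pic(X)\cap\del\cC_X=0$, there is no nonzero primitive line bundle $L\in \Pic(X)\cap \birKbar_X$ with $q(L)=0$ (recall $\birKbar_X\subseteq \overline{\cC_X}$), and so the conjecture holds vacuously for $X$.

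With the hypotheses verified, suppose for contradiction that there exists a big and nef line bundle $H\in\Pic(X)$ with a non-trivial fixed component, i.e.~with a non-trivial base divisor. Then by Theorem \ref{thm:main-thm}, we can write $H=mL+F$ where $m\geq 2$, $F$ is an irreducible reduced divisor of negative square, and $L$ is a primitive movable line bundle with $q(L)=0$. In particular $L$ is nonzero.

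By Corollary \ref{cor:MovbirKbar}, the movable line bundle $L$ lies in $\Movbar(X)=\birKbar_X\cap \Pic(X)_\bR\subseteq \overline{\cC_X}$. Combined with $q(L)=0$, this places $L$ on the boundary of the positive cone, so $0\neq L\in \Pic(X)\cap\del\cC_X$, contradicting the hypothesis $\Pic(X)\cap\del\cC_X=0$. Hence no big and nef line bundle on $X$ can have fixed components.

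The proof is essentially a one-line deduction once the applicability of the main theorem is justified; the only subtle point to mention is checking that Conjecture \ref{conj:strongRLF} is automatically satisfied under our hypothesis on $\Pic(X)\cap\del\cC_X$, so that Theorem \ref{thm:main-thm} may be invoked without further assumptions.
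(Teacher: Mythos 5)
Your proof is correct and follows exactly the route the paper intends (the corollary is stated with an immediate $\square$): the hypothesis $\Pic(X)\cap\del\cC_X=0$ makes Conjecture \ref{conj:strongRLF} hold vacuously, so Theorem \ref{thm:main-thm} applies, and the required primitive movable $L$ with $q(L)=0$ would be a nonzero element of $\Pic(X)\cap\del\cC_X$, a contradiction. Nothing is missing.
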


\begin{corollary}\label{cor:2HnobasecompforISV}
  Fix an irreducible symplectic variety $X$  such that  ${RR_X}|_{\bZ_{\geq 0}}$ is
  strictly monotonic 
  and Conjecture \ref{conj:strongRLF} holds for $X$. Let $H \in \Pic(X)$ be a big and nef line
  bundle. Then 
  $2H$ does not have a divisorial base component.
\end{corollary}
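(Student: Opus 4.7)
The plan is to derive a contradiction from the assumption that $2H$ has a nontrivial divisorial base locus, applying Theorem \ref{thm:main-thm} both to $H$ and to $2H$.

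First I would record the elementary observation that if $B_H$ and $B_{2H}$ denote the respective fixed divisors of $|H|$ and $|2H|$, then $B_{2H} \leq 2 B_H$. Indeed, for any two sections $s,t \in H^0(X,H)$ the product $st \in H^0(X, 2H)$ has divisor $\geq 2B_H$; taking the gcd over all such products gives exactly $2B_H$, and passing to the full linear system $|2H|$ can only decrease the base divisor. In particular, a nontrivial $B_{2H}$ forces $B_H$ to be nontrivial as well.

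I would then apply Theorem \ref{thm:main-thm} separately to $H$ and to $2H$, obtaining
\begin{equation*}
  H = m'L' + F', \qquad 2H = mL + F,
\end{equation*}
with integers $m, m' \geq 2$, with primitive movable line bundles $L, L' \in \Pic(X)$ satisfying $q(L) = q(L') = 0$, and with $F = B_{2H}$, $F' = B_H$ both irreducible and reduced. Since $F \leq 2F'$ and both are prime divisors appearing with multiplicity one, necessarily $F = F'$ as divisors, hence as classes in $\Pic(X)$. Substituting, $mL + F = 2H = 2(m'L' + F)$ yields $mL = 2m'L' + F$, and therefore
\begin{equation*}
H = m'L' + F = mL - m'L'.
\end{equation*}
Using $q(L) = q(L') = 0$, the Beauville--Bogomolov--Fujiki square of $H$ reduces to $q(H) = -2mm'(L, L')_q$. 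Bigness of $H$ gives $q(H) > 0$, forcing $(L, L')_q < 0$. On the other hand, the movable class $L$ lies in $\birKbar_X$ by Corollary \ref{cor:MovbirKbar}, and a positive multiple of the movable class $L'$ is represented by an effective divisor, so Corollary \ref{cor:BBforbig+nef} yields $(L, L')_q \geq 0$, the desired contradiction.

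The crux of the argument is the identification $F = F'$: the inequality $B_{2H} \leq 2 B_H$ alone is too weak, and one genuinely needs the main theorem's assertion that both fixed divisors are prime and reduced so that the two primes must coincide. Once this identification is in place, the rest is a short calculation with the Beauville--Bogomolov--Fujiki form.
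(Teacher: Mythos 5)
Your proof is correct, and its skeleton coincides with the paper's: both arguments apply Theorem \ref{thm:main-thm} to $H$ and to $2H$, use the inclusion of base loci together with irreducibility and reducedness of the two fixed divisors to force $F=F'$, and then derive a numerical contradiction from the Beauville--Bogomolov--Fujiki form. The only genuine divergence is in the endgame. The paper writes $m'L'=2H-F=2mL+F$ and contradicts $q(m'L')=0$ by showing $q(2mL+F)=4m(L,F)_q+q(F)>0$, which requires the inequality $-q(F)\leq 2(L,F)_q$ from Lemma \ref{lem:ineqforPED}. You instead eliminate $F$ entirely, writing $H=mL-m'L'$, so that $q(H)=-2mm'(L,L')_q$; bigness of $H$ then forces $(L,L')_q<0$, contradicting Corollary \ref{cor:MovbirKbar} and Corollary \ref{cor:BBforbig+nef} applied to the movable class $L\in\birKbar_X$ and the effective class $m'L'$. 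Your variant avoids Lemma \ref{lem:ineqforPED} (the divisibility constraint on prime exceptional divisors) altogether and uses only the soft positivity of pairings between movable and effective classes, which is arguably more robust; the paper's variant yields the slightly more quantitative byproduct that $2mL+F$ has strictly positive square. Both are complete; your preliminary observation that $B_{2H}\leq 2B_H$ (so that a base divisor of $2H$ forces one for $H$) is also implicitly needed in the paper's proof and you are right to make it explicit.
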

\begin{proof}
  For a big and nef line bundle $H\in \Pic(X)$ Theorem \ref{thm:main-thm} shows that if $H$
  has a divisorial base component, $H$ is of the form  $H=mL+F$, where $m\geq 2$, $L$ is movable with
  $q(L)=0$, and $F$ is the fixed part of $H$, which is an irreducible reduced divisor of 
  negative square, and $(L,F)_q>0$.
  
  Therefore $2H=2mL+2F$.  
  Assume for contradiction that $2H$ has a
  divisorial base component $F'$. Then Theorem \ref{thm:main-thm} shows that $2H=m'L'+ F'$,
  where $F'$ is the base locus of $2H$, and $q(L')=0$. However, the base locus of $2H$  is contained in
  the base locus of $H$, which implies $F'=F$. Therefore $m'L'=2H-F=2mL+F$. For the last term use
  $(L,F)_q>0$ and Lemma \ref{lem:ineqforPED} to show that 
  $q(m'L')=q(2mL+F)=4m(L,F)_q+q(F)>0$ which gives the desired contradiction to $q(m'L')=0$.
\end{proof}

\section{Proof of the main theorem (Theorem \ref{thm:main-thm})}\label{sec:proofofmainthm}
In this section we prove the main theorem of this article. The general structure of this proof was inspired by
the case of K3 surfaces (compare \cite[Proof of Corollary 3.15]{HuybrechtsK3}).

\bigskip

A crucial observation in the study of base components for irreducible symplectic varieties is the following:
\begin{lemma}\label{lem:FsquareNeg}
Let $X$ be an irreducible symplectic variety and  $0\neq F  \in \Pic(X)$ be a fixed divisor (i.e. $h^0(X,F)=1$).
  If $RR_X|_{\bZ_{\geq 0}}$ is monotonic and if $X$ satisfies Conjecture \ref{conj:strongRLF} then $q(F)<0$.
\end{lemma}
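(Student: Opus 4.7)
The plan is to argue by contradiction. Assume $q(F)\ge 0$; since $F$ is a nonzero effective divisor, it lies in $\NS(X)\cap\overline{\cC_X}$. Proposition~\ref{prop:RefltoBK} then produces a composition $R=R_{D_{k-1}}\circ\dots\circ R_{D_0}$ of reflections in prime exceptional divisors $D_j$ such that $F'\coloneqq R(F)\in\birKbar_X$; Proposition~\ref{prop:RisMonOP} guarantees $F'\in\Pic(X)$, $q(F')=q(F)\ge 0$, and $F'\neq 0$. My strategy is to exhibit an injection $H^0(X,F')\hookrightarrow H^0(X,F)$ forcing $h^0(X,F')\le 1$, and then to derive a contradiction from a lower bound on $h^0(X,F')$.

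For the injection, consider the iteration $\alpha_0\coloneqq F$, $\alpha_{j+1}\coloneqq R_{D_j}(\alpha_j)=\alpha_j-c_jD_j$ with $c_j=2(D_j,\alpha_j)_q/q(D_j)$. The selection rule in the proof of Proposition~\ref{prop:RefltoBK} ensures $(D_j,\alpha_j)_q<0$ and $q(D_j)<0$, hence $c_j>0$. Writing $(D_j,\alpha_j)_q=k_j\div(D_j)$ with $k_j\in\bZ$, Lemma~\ref{lem:ineqforPED} gives $2\div(D_j)/q(D_j)\in\bZ$, so $c_j=k_j\cdot\bigl(2\div(D_j)/q(D_j)\bigr)$ is a positive integer. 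Telescoping yields $F-F'=\sum_{j=0}^{k-1}c_jD_j$, which is an effective integral divisor since the $D_j$ are irreducible reduced effective divisors. Multiplication by its canonical section produces $H^0(X,F')\hookrightarrow H^0(X,F)$, and $h^0(X,F')\le 1$ follows.

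To close the contradiction I bound $h^0(X,F')$ from below, splitting on the sign of $q(F')$. If $q(F')>0$, then $F'\in\birKbar_X\cap\cC_X$, so Lemma~\ref{lem:h0inbirKbar} together with Theorem~\ref{thm:RRISV} gives $h^0(X,F')=RR_X(q(F'))$; the monotonicity of $RR_X|_{\bZ_{\ge 0}}$ combined with $RR_X(0)=\chi(\dO_X)=n+1\ge 2$ (Proposition~\ref{prop:chi0}) then yields $h^0(X,F')\ge n+1\ge 2$. If $q(F')=0$, write $F'=mL$ for $L\in\Pic(X)\cap\birKbar_X$ primitive with $q(L)=0$ and $m\ge 1$. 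Conjecture~\ref{conj:strongRLF} applied to $L$ furnishes a birational irreducible symplectic variety $X'$ and a Lagrangian fibration $\phi\colon X'\to\bP^n$ under which $L$ becomes $\phi^*\dO(1)$; together with Lemma~\ref{lem:isocodim1} this gives $h^0(X,F')=h^0(X',\phi^*\dO(m))=\binom{m+n}{n}\ge n+1\ge 2$. Either way, $h^0(X,F')\ge 2$ contradicts $h^0(X,F')\le 1$.

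The main technical obstacle is ensuring integrality and effectiveness of the telescoped sum $F-F'$: the classes of prime exceptional divisors need not be primitive in $H^2(X,\bZ)$, so $c_j\in\bZ$ is not automatic from the integrality of $R_{D_j}$ provided by Proposition~\ref{prop:RisMonOP}. Lemma~\ref{lem:ineqforPED} is the crucial input that secures integrality, and hence effectiveness upon telescoping, allowing the multiplication map argument to go through.
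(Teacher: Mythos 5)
Your proof is correct and follows essentially the same route as the paper's: reflect $F$ into $\birKbar_X$ via Proposition~\ref{prop:RefltoBK}, use Lemma~\ref{lem:ineqforPED} to see that the telescoped difference is an effective integral divisor, and then rule out $h^0\le 1$ by the same case split on $q(F')>0$ (Lemma~\ref{lem:h0inbirKbar} plus monotonicity of $RR_X$) versus $q(F')=0$ (Conjecture~\ref{conj:strongRLF}). The only difference is presentational: you spell out the integrality of the coefficients $c_j$ and the section-multiplication injection, which the paper leaves implicit.
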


\begin{proof}
  Suppose for contradiction that $F\in \overline{\cC_X}$. Apply Proposition \ref{prop:RefltoBK} (and its
  proof) to see 
  that, after a finite number of reflections associated to prime exceptional divisors,
  $A\coloneqq R_{D_{k-1}} \circ \dots \circ R_{D_0}(F)\in \birKbar_X$ lies in the closure of the birational
  Kähler cone.
  Recall that in the proof of Proposition \ref{prop:RefltoBK},  we set $F_0\coloneqq F$. As long as
  $F_i\notin \birKbar_X$, we
  successively pick effective divisors $D_i$ with $q(D_i)<0$ and $(D_i,F_i)_q<0$, and define
  $F_{i+1}\coloneqq R_{D_i}(F_i) = F_i - \frac{2(D_i,F_i)_q}{q(D_i)}  \, D_i$.
  By the choice of the $D_i$, each of these reflections
  subtracts a positive multiple of the effective divisor 
   $D_i$. 

   Consequently
  \begin{equation*}
    F= A+\sum_{i=0}^{k-1} a_i D_i \quad {\rm with\ } a_i= \frac{2(D_i,F_i)_q}{q(D_i)} >0.
  \end{equation*}
  Note that $a_i\in \bZ$ is integral by Lemma \ref{lem:ineqforPED}.
Therefore, it suffices to show that no element $A\in \birKbar_X$ is a fixed divisor.

If $q(A)>0$, then by assumption $RR_X(q(A))\geq RR_X(0)=n+1>1$, which together with Lemma \ref{lem:h0inbirKbar} implies 
\begin{equation*}
  h^0(X,A)=\chi(X,A)=RR_X(q(A))>1.
\end{equation*}
In particular $A$ is not a fixed divisor.

If $q(A)=0$ Conjecture \ref{conj:strongRLF} implies that $A$ is movable. 
In both cases this shows that $F$ could not be a fixed divisor.
\end{proof}

Using Lemma \ref{lem:FsquareNeg} we can show:

\begin{lemma}\label{lem:Msquare0}
  Let $X$ be an irreducible symplectic variety which satisfies Conjecture \ref{conj:strongRLF} and such
  that the map ${RR_X}|_{\bZ_{\geq 0}}$ is strictly
  monotonic. Fix a big and nef  bundle $H\in \Pic(X)$ and consider the
  decomposition $H=M+F$ into the movable and fixed part. 
  If in this situation $q(M)>0$, then $F=0$.
\end{lemma}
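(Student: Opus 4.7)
The plan is to argue that the hypotheses force $q(H)=q(M)$, and then derive a contradiction if $F\neq 0$ by evaluating $(H,F)_q$.

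First I would place $M$ in $\birKbar_X\cap \cC_X$: movability gives $M\in \birKbar_X\subseteq \overline{\cC_X}$ by Corollary \ref{cor:MovbirKbar}, and the hypothesis $q(M)>0$ then puts $M$ in the interior $\cC_X$. Since $H$ is big and nef, it likewise lies in $\birKbar_X\cap \cC_X$. Lemma \ref{lem:h0inbirKbar} applies to both and yields
\begin{equation*}
h^0(X,M)=\chi(X,M)=RR_X(q(M)), \qquad h^0(X,H)=\chi(X,H)=RR_X(q(H)).
\end{equation*}
By the definition of the movable/fixed decomposition, $h^0(X,H)=h^0(X,M)$, hence $RR_X(q(H))=RR_X(q(M))$ with both arguments in $\bZ_{\geq 0}$. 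Strict monotonicity of $RR_X$ then forces $q(H)=q(M)$.

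Expanding $q(H)=q(M+F)=q(M)+2(M,F)_q+q(F)$ produces the key identity $2(M,F)_q=-q(F)$. Now suppose for contradiction $F\neq 0$. A short argument based on the absence of fixed components in $|M|$ shows that $h^0(X,F)=1$: for any $F'\in |F|$ and any $m\in H^0(X,M)$, the divisor $\div(m)+F'\in |H|$ must contain $F$, which forces $F\leq F'$, and then $F'\sim F$ gives $F'=F$. Hence $F$ is a fixed divisor in the sense of Lemma \ref{lem:FsquareNeg}, which (using Conjecture \ref{conj:strongRLF} and monotonicity of $RR_X$) yields $q(F)<0$. Substituting,
\begin{equation*}
(H,F)_q=(M,F)_q+q(F)=-\tfrac{1}{2}q(F)+q(F)=\tfrac{1}{2}q(F)<0,
\end{equation*}
contradicting Corollary \ref{cor:BBforbig+nef}, which forces $(H,F)_q\geq 0$ for the nef line bundle $H$ and the effective divisor $F$. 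We conclude $F=0$.

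The only substantive step is the monotonicity argument producing $q(H)=q(M)$; the rest is straightforward algebra with the Beauville--Bogomolov form, the decisive sign coming from Lemma \ref{lem:FsquareNeg}. The main technical nuisance is verifying $h^0(X,F)=1$ for the fixed part, but this follows cleanly from movability of $M$ and is independent of the irreducible symplectic setting.
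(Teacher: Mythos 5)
Your proof is correct and follows essentially the same route as the paper: both hinge on $h^0(X,H)=h^0(X,M)$ plus strict monotonicity of $RR_X$ to force $q(H)=q(M)$, then combine the resulting identity $2(M,F)_q=-q(F)$ with Lemma \ref{lem:FsquareNeg} and the nonnegativity of $(H,F)_q$ from Corollary \ref{cor:BBforbig+nef}. The only (harmless) differences are that you explicitly verify $h^0(X,F)=1$ for the fixed part, which the paper takes as implicit, and you reach the contradiction via $(H,F)_q=\tfrac12 q(F)<0$ rather than first deducing $(M,F)_q=(H,F)_q=0$ and then $q(F)=0$.
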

\begin{proof}
  By assumption, $M$ is movable.
  Therefore $M \in \Pic(X)\cap\birKbar_X$ by Corollary \ref{cor:MovbirKbar}. Since $q(M)>0$, Lemma
  \ref{lem:h0inbirKbar} applies and yields
\begin{align*}
  \chi(X,H)=h^0(X,H)=h^0(X,M)=\chi(X,M),
\end{align*}
where the first equality exploits that $H$ is big and nef, and therefore satisfies 
Kodaira vanishing,
the second equality holds since $M$ is the movable part of $H$, and the third  equality is the statement
of Lemma \ref{lem:h0inbirKbar}.

This implies $q(H)=q(M)$, since by assumption $RR_X|_{\bZ\geq 0}$ is strictly monotonic, and in
particular injective. 
Then 
\begin{align*}
  q(M)&=q(H)=q(M+F)=q(M)+2(M,F)_q+q(F) \quad{\rm implies}\\
  0&=2(M,F)_q+q(F)=\underbrace{(M,F)_q}_{\geq 0}+\underbrace{(H,F)_q}_{\geq 0}.
\end{align*}
The fact that both terms on the right hand side are at least zero, follows from Lemma \ref{lem:BBforeffective}. 
This equality implies that $(M,F)_q = 0 = (H,F)_q$. 
Therefore
\begin{equation*}
  0= (H,F)_q =(M+F,F)_q = \underbrace{(M,F)_q}_{=0} + \, q(F).
\end{equation*}
By Lemma \ref{lem:FsquareNeg}, we know that $q(F)<0$ whenever $F$ is not trivial. It follows that $F=0$
as claimed.  
\end{proof}

Using these results, we can complete the proof of the main theorem
\begin{proof}[{Proof of Theorem \ref{thm:main-thm}}]
  Let $X$ be an irreducible symplectic variety satisfying the conditions of Theorem \ref{thm:main-thm}
  and $H=M+F\in \Pic(X)$ be a big and nef line bundle with its decomposition into movable and fixed part.
  Suppose that  $H\neq M$ or equivalently that the fixed divisor $F\neq 0$ is not trivial. It follows from Lemma
  \ref{lem:Msquare0} that $q(M)=0$.
  Using that $RR_X|_{\bZ_{\geq 0}}$ is strictly monotonic one obtains
  \begin{equation*}
    h^0(X,M)=h^0(X,H)=\chi(X,H)=RR_X(q(H))>RR_X(0)=n+1,
  \end{equation*}
  since $M$ is the movable part of $H$, by Kodaira vanishing, Theorem \ref{thm:RRISV}, and Proposition
  \ref{prop:chi0}. 
  Denote $M=m\cdot L$, where $L$ is the
  primitive line bundle in the same ray and $m\geq 0$. By Conjecture \ref{conj:strongRLF}, $|L|$ induces a rational
  Lagrangian fibration  
  to $\bP^n$ and therefore $h^0(X,L)=n+1$. Thus $m\geq 2$.

We need to observe that $F$ is irreducible. 
 Let $F=\sum a_i F_i$ be the decomposition of $F$ into its
irreducible components. 
Lemma \ref{lem:FsquareNeg} shows that $q(F)<0$.
Since $0<(H,F)_q=(M,F)_q+ q(F)= \sum_i a_i(M,F_i)_q+ q(F) $,  there exists at least one $i_0$ such that the irreducible component
$F_{i_0}$ satisfies $(M,F_{i_0})_q>0$. 

Define $H'\coloneqq M+F_{i_0}=mL+F_{i_0}$. We will show that $H'\in \birKbar_X\cap \cC_X$. 
By choice of $i_0$ one has $(L,F_{i_0})_q>0$, and thus $(L,F_{i_0})_q\geq \div(F_{i_0})>0$. 
On the other hand, since $F_{i_0}$ is a fixed divisor, Lemma \ref{lem:FsquareNeg} shows that $q(F_{i_0})<0$, and thus Lemma
\ref{lem:ineqforPED}  implies that 
\begin{equation}\label{eq:ineqforFi0}
-q(F_{i_0})\leq 2\div(F_{i_0})\leq 2(L,F_{i_0})_q\,.
\end{equation}

Consequently
\begin{align*}
  q(H')=q(mL+F_{i_0})
  =\underbrace{m^2q(L)}_{=0}+  2(m-1)(L,F_{i_0})_q+ \underbrace{2(L,F_{i_0})_q+ q(F_{i_0})}_{\geq 0}>0.
\end{align*}
For any irreducible effective divisor $D\neq F_{i_0}\in \Pic(X)$ (in particular for uniruled divisors) 
use Lemma \ref{lem:BBforeffective} to see
\begin{align*}
  &(H',D)_q=(mL+F_{i_0},D)_q=\underbrace{m(L,D)_q}_{\geq 0}+\underbrace{(F_{i_0},D)_q}_{\geq 0} \geq 0.
\end{align*}
Finally observe that
\begin{align*}
  (H',F_{i_0})_q=(mL+F_{i_0},F_{i_0})_q=m(L,F_{i_0})_q+q(F_{i_0}) \geq 0,
\end{align*}
where the last inequality follows again from \eqref{eq:ineqforFi0}.
Thus $H'\in \birKbar_X\cap \cC_X$ by Theorem \ref{thm:descrofbirKbar}.

Similar to Lemma \ref{lem:Msquare0} we show that $F-F_{i_0}=0$: Since $M$ is the movable part of $H$, 
observe that $h^0(X,H)=h^0(X,H')=h^0(X,M)$.  
Using Lemma \ref{lem:h0inbirKbar} twice, thus gives $\chi(X,H)=h^0(X,H)=h^0(X,H')=\chi(X,H')$, and thus
$q(H)=q(H')$ by the injectivity of $RR_X|_{\bZ_{\geq 0}}$.
Therefore
\begin{align*}
  0&=q(H)-q(H')
  = (H, H'+F-F_{i_0})_q - q(H')\\
  &= (H'+F-F_{i_0}, H')_q +(H, F-F_{i_0})_q  -q(H')\\
  &=\underbrace{(F-F_{i_0},H')_q}_{\geq 0} + 
    \underbrace{(H,F-F_{i_0})_q}_{\geq 0},
\end{align*}
where the last inequalities follow from  Corollary \ref{cor:BBforbig+nef}.
Consequently $(H,F-F_{i_0})_q = 0 = (H',F-F_{i_0})_q$, and thus
\begin{equation*}
q(F-F_{i_0})= (H-H',F- F_{i_0})_q = (H,F- F_{i_0})_q-(H',F- F_{i_0})_q = 0.
\end{equation*}
Since $F-F_{i_0}$ is a fixed divisor, Lemma \ref{lem:FsquareNeg} implies that $F-F_{i_0}=0$. Therefore 
 $F=F_{i_0}$ is irreducible and reduced and in addition $(F,L)_q=(F_{i_0},L)_q>0$.

In order to establish the condition on $RR_X(q(H))$, 
  begin with the following observation.
  For a primitive movable line bundle $L\in \Pic(X)\cap \birKbar_X$ with $q(L)=0$, use Conjecture
  \ref{conj:strongRLF} for $X$ to obtain a birational model $X'$, where the corresponding line bundle $L'$ 
  induces a Lagrangian fibration to $\bP^n$.
  In particular 
  \begin{equation}\label{eq:h0ofmL}
   h^0(X,mL)= h^0(X',mL')=h^0(\bP^n, \dO(m))= \binom{m+n}{n},
  \end{equation}
  where the first equality follows from the fact that $X$ and $X'$ are isomorphic away from codimension
  two by Lemma \ref{lem:isocodim1}. 

Therefore the theorem follows from the above, since by Kodaira vanishing
$h^0(X,H)=\chi(X,H)=RR_X(q(H))$, and because a line bundle of the form $H=mL+F$ has base
locus along $F$ if and only if $h^0(X,H)=h^0(X,mL)$.
  \end{proof}

\section{Discussion of the assumptions in the statement}\label{sec:discuss-conditions}
In this section we discuss the assumptions in the statement of Theorem \ref{thm:main-thm}.

First note that Theorem \ref{thm:main-thm} applies to all varieties of K3$^{[n]}$-type and of
Kum$^n$-type: Conjecture \ref{conj:strongRLF} holds in these cases by Theorem \ref{thm:Matsushita}.
The
condition that $RR_X|_{\bZ_{\geq 0}}$ is strictly monotonic, can either be verified directly, or deduced
from the following lemma:

Let $U$ be the standard hyperbolic lattice of rank two, i.e.~the lattice with associated matrix 
$(\begin{smallmatrix} 0&1\\ 1& 0 \end{smallmatrix})$. 

\begin{lemma}\label{lem:RRinj}
  Fix an irreducible symplectic variety $X$ such that 
  \begin{compactenum}
    \item Conjecture \ref{conj:strongRLF} is satisfied for all deformations of $X$,
    \item $H^2(X,\bZ)$ is an even lattice with respect to the Beauville--Bogomolov--Fujiki form,
      and \label{it:even} 
    \item $H^2(X,\bZ)$ contains a copy of $U$.\label{it:U}
  \end{compactenum}
  Then the restriction 
  $\restr{RR_X}{\bZ_{\geq 0}}\colon\bZ_{\geq 0}\to \bZ$ is strictly monotonic and in particular injective. 
\end{lemma}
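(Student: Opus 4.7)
The plan is to exploit the deformation-invariance of $RR_X$ together with the hyperbolic sublattice $U\subseteq H^2(X,\bZ)$ to realize the polynomial on a deformation where it is computable via a Lagrangian fibration. Since $\chi(X,L)=RR_X(q(L))$ is a topological invariant and $q$ is deformation-invariant, $RR_X$ depends only on the deformation class of $X$. By the surjectivity of the period map together with assumption \ref{it:U}, I would first replace $X$ by a sufficiently generic irreducible symplectic deformation $X'$ with $\Pic(X')\iso U=\langle e,f\rangle$, where $q(e)=q(f)=0$ and $(e,f)_q=1$. Assumption \ref{it:even} forces $q$ to be even on $\Pic(X')$, and genericity rules out effective classes of negative square in $\Pic(X')$, so that by Proposition \ref{prop:boucksom} we have $\overline{\cC_{X'}}\cap\Pic(X')=\birKbar_{X'}\cap\Pic(X')$.

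For each $k\geq 1$ set $L_k\coloneqq e+kf\in\Pic(X')$, so $q(L_k)=2k>0$ and $L_k\in\cC_{X'}\cap\birKbar_{X'}$. Proposition \ref{prop:nefonbiratmodel} makes $L_k$ big and nef on some birational model $X''$, and Kodaira vanishing yields $RR_X(2k)=\chi(X',L_k)=h^0(X',L_k)$. Applying assumption (a) to $X'$, the primitive movable class $e$ induces a rational Lagrangian fibration $\phi\colon X''\surj\bP^n$ with $\phi^*\dO(1)=e$, and the class $f$ restricts to a positive-degree polarization on the generic abelian fiber (with top self-intersection computable by polarized Fujiki). Since $kf$ is relatively ample for $\phi$, we have $R^i\phi_*\dO(kf)=0$ for $i>0$, and the projection formula combined with Leray gives
\begin{equation*}
RR_X(2k)=\chi(\bP^n,\dO(1)\otimes\phi_*\dO(kf)),
\end{equation*}
where $\phi_*\dO(kf)$ has generic rank $dk^n$ for a fixed positive integer $d$. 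Together with the base case $RR_X(0)=h^0(X',e)=n+1$ supplied by Conjecture \ref{conj:strongRLF}, this identifies $RR_X$ on $2\bZ_{\geq 0}$ and yields strict monotonicity on the even non-negative integers.

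To upgrade strict monotonicity from $2\bZ_{\geq 0}$ to all of $\bZ_{\geq 0}$, I would use that the resulting polynomial identity for $RR_X(2k)$ determines $RR_X(x)$ completely; after factoring, it takes the form $c\prod_{i=1}^n(x/2+a_i)$ with $c>0$ and $a_i\geq 0$, so $RR_X$ is strictly increasing on $[0,\infty)$ and hence on $\bZ_{\geq 0}$. The main obstacle is the control of $\phi_*\dO(kf)$: Conjecture \ref{conj:strongRLF} only supplies the existence of the Lagrangian fibration, so extracting the geometric regularity required here—relative ampleness of $f$, vanishing of higher pushforwards, and a sufficiently manageable description of the pushforward sheaf on $\bP^n$—requires additional input beyond the stated assumptions. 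A robust alternative would be a semicontinuity argument along a family degenerating $(X',L_k)$ to an explicit Beauville--Mukai-type system where $\phi_*\dO(kf)$ is directly computable, but constructing such degenerations within the deformation class of $X$ is itself delicate.
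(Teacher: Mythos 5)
Your overall strategy of deforming to $\Pic(X')\iso U$ and evaluating $RR_X$ at the classes $e+kf$ is exactly the paper's starting point, but the core of your argument has two genuine gaps. First, the claim that genericity rules out effective classes of negative square in $\Pic(X')\iso U$ is false: the class $e-f$ has square $-2$, and one of $\pm(e-f)$ can be (and in the K3 case always is) effective even for the generic member of the family with Picard lattice $U$ --- effectivity here is governed by the lattice data (via Lemma \ref{lem:ineqforPED} the only candidates of negative square are $\pm(e-f)$), not by genericity. The paper therefore treats two cases, $\birKbar_{X'}=\cC_{X'}$ and $\birKbar_{X'}\neq\cC_{X'}$; in the second case one still checks $(e+kf,\pm(e-f))_q\geq 0$ for $k\geq 1$, so the classes $e+kf$ remain in $\birKbar_{X'}\cap\cC_{X'}$, and Conjecture \ref{conj:strongRLF} applied to whichever of $e,f$ is movable still yields $h^0\geq n+1$ for both generators.

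Second, and more seriously, the step that actually produces monotonicity in your write-up --- computing $RR_X(2k)=\chi(\bP^n,\dO(1)\otimes\phi_*\dO(kf))$ with control on the relative ampleness of $f$, the vanishing of $R^i\phi_*$, and the rank and structure of $\phi_*\dO(kf)$ --- is precisely the part you concede cannot be extracted from the stated hypotheses. That is the heart of the proof, and it is missing. The paper avoids any such computation: it only needs to compare consecutive values, using the elementary fact that for effective divisors $A$ and $E$ on an irreducible projective variety one has $h^0(A+E)\geq h^0(A)+h^0(E)-1$, so that $h^0(E)=n+1\geq 2$ forces $h^0(A+E)>h^0(A)$. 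Combined with $h^0(X',B_k)=\chi(X',B_k)=RR_X(2k)$ from Lemma \ref{lem:h0inbirKbar}, this gives $RR_X(2k+2)>RR_X(2k)$ and $RR_X(2)>RR_X(0)=n+1$ directly, with no need to identify the polynomial, control the pushforward sheaf, or degenerate to a Beauville--Mukai system. Replacing your pushforward computation by this comparison argument (and handling the case where $\pm(e-f)$ is effective) closes the gap.
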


\begin{remark}
  This lemma should serve as an indication that it is reasonable to ask whether $\restr{RR_X}{\bZ_{\geq
      0}}$ is strictly monotonic for all irreducible symplectic varieties.
  Conjecture \ref{conj:strongRLF} is a classical conjecture on irreducible symplectic
  varieties. Furthermore all known examples of irreducible symplectic varieties satisfy \ref{it:even} and 
  \ref{it:U}. It is not known, whether these conditions hold for arbitrary irreducible
  symplectic varieties. 
  I expect, that in general the question whether  $\restr{RR_X}{\bZ_{\geq
      0}}$ is strictly monotonic should be easier to answer than Conjecture \ref{conj:strongRLF}.
\end{remark}

\begin{proof}
The polynomial $RR_X$ only depends on the deformation type of $X$. 
Therefore by passing to a suitable deformation of $X$ we may assume that 
$\Pic(X)\iso U$. Let $E,F\in \Pic(X)$ be the generators of $U$
that span the boundary of the positive cone. 
For $k>0$ define the element $B_k\coloneqq F+kE\in U$. Then $q(B_k)=2k$ and therefore 
\begin{equation*}
RR_X(2k)=RR_X(q(B_k))=\chi(X,B_K).
\end{equation*}

The closure of the birational Kähler cone $\birKbar_X$ is cut out by uniruled irreducible (and reduced)
divisors $D$ with $q(D)<0$ 
(compare Theorem \ref{thm:descrofbirKbar}). By Lemma \ref{lem:ineqforPED} every such divisor $D$
satisfies $\half \left|q(D)\right| \leq \div(D)$. 
Express $D$ as $aE+bF$ for some $a,b\in \bZ$. Then the equality gives
\begin{equation*}
  \half \left|2ab\right| \leq \gcd(a,b), 
\end{equation*}
which has solutions for $\left| a\right|=\left| b\right|=1$ or $a=0$ or $b=0$. Thus the only solution
with $q(D)<0$ is $\pm (E-F)$.

Therefore there are two  possible cases:

{\it Case 1: Both elements $E-F$ and $F-E$ are not effective, and thus $\birKbar_X=\cC_X$.}
By Conjecture \ref{conj:strongRLF} both $E$ and $F$ are effective with
$h^0(X,E)=h^0(X,F)=n+1>1$, where $n=\dim(X)/2$ (both are giving a rational Lagrangian fibration to $\bP^n$).
In particular observe that
\begin{align*}
  &h^0(X,B_1)= h^0(X,E+F)>h^0(X,F)=n+1,\quad{\rm and} \\
  &h^0(X,B_{k+1})=h^0(X,B_k+E)>h^0(X,B_k)\quad \forall k>0.
\end{align*}


{\it Case 2: $\birKbar_X\neq \cC_X$.}
In this case there exists a unique prime exceptional divisor: either $E-F$ or $F-E$.
 Without loss of generality we may assume $D\coloneqq F-E$ is effective, and thus  $E \in \birKbar_X$ induces a rational
Lagrangian fibration to
$\bP^n$ by Conjecture \ref{conj:strongRLF}. Therefore $h^0(X,E)=n+1>1$, and $F=E+D$ implies 
$h^0(X,F)\geq h^0(X,E)=n+1>1$.
Thus
\begin{align*}
  &h^0(X,B_1)=h^0(X,F+E)=h^0(X,2E+D)\geq h^0(X,2E) > n+1,\quad{\rm and}\\
  &h^0(X,B_{k+1})=h^0(X,B_k+E)>h^0(X,B_k)\quad \forall k>0.
\end{align*}

In both cases the elements $B_k$ are contained in $\birKbar_X\cap \cC_X$ (for Case 2 note that
$(B_k,D)_q=(F+kE,F-E)_q=k-1\geq 0$). 
Therefore by Lemma \ref{lem:h0inbirKbar}
  $\chi(X,B_k)=h^0(X,B_k)$.

Together, this  yields that in both cases:
\begin{align*}
  &RR_X(2)=\chi(X,B_1)=h^0(X,B_1)>n+1=RR_X(0), \quad {\rm and}\\
  &RR_X(2k+2)=\chi(X,B_{k+1})=h^0(X,B_{k+1})>\\
  &\hspace{6em} h^0(X,B_k)=\chi(X,B_k)=RR_X(2k) \quad \forall k>0,
\end{align*}
which concludes the proof.
\end{proof}

For arbitrary deformation types, the condition that $RR_X$ is strictly monotonic is known to hold up to
dimension 6:
\begin{proposition}
  Let $X$ be an irreducible symplectic variety of dimension $\leq 6$.
  Then in the Riemann--Roch polynomial
  $RR_X(x)\coloneqq \sum_{i=0}^n b_i x^i$
  all $b_i$ are non-negative.
  Since $b_n>0$ this implies that $RR_X$ is strictly monotonic.
\end{proposition}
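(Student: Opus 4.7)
The plan is to apply Hirzebruch--Riemann--Roch directly, exploiting the triviality of $c_1(X)$ for an irreducible symplectic variety. Since $c_1(X)=0$, the relevant Todd polynomials of odd degree involving $c_1$ vanish, and up to degree $6$ one has $\text{td}(X)=1+c_2(X)/12+(3c_2(X)^2-c_4(X))/720+T_6(X)$. Expanding $\chi(X,L)=\int_X e^L\cdot\text{td}(X)$ and invoking the generalized Fujiki relations, which assert that $\int_X L^{2k}\cdot\alpha = c(\alpha)\,q(L)^k$ for any characteristic class $\alpha$ of complementary degree (with $c(\alpha)$ depending only on the deformation type), each coefficient $b_k$ becomes a fixed rational multiple of a universal characteristic number. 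The claim $b_n>0$ is immediate from the positivity of the Fujiki constant; the claim $b_0=n+1\geq 0$ is Proposition~\ref{prop:chi0}. Hence it suffices to verify that the middle coefficients are non-negative for $n\leq 3$.

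For $n=1$ (K3 surfaces), this is immediate: $RR_X(x)=2+x/2$. For $n=2$ (dim $4$), the expansion gives $b_0=3$, $b_2=C_X/24>0$, and $b_1=\lambda/24$ where $\lambda\cdot q(L)=\int_X L^2 c_2(X)$. Taking $L$ ample with $q(L)>0$, non-negativity of $b_1$ reduces to the classical positivity $\int_X L^2 c_2(X)\geq 0$ for an ample class on a Kähler Calabi--Yau manifold, which follows from the Chen--Ogiue / Bogomolov inequality applied to the $\mu_L$-polystable tangent bundle of $X$ (using Yau's theorem for the existence of a Ricci-flat Kähler metric representing $L$ up to perturbation).

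For $n=3$ (dim $6$) the same scheme gives $b_0=4$, $b_3>0$, together with $b_2\propto \int_X L^4 c_2(X)/q(L)^2$ and $b_1\propto \int_X L^2(3c_2(X)^2-c_4(X))/q(L)$. Non-negativity of $b_2$ follows as in the dim $4$ case, applied with $L^4$ in place of $L^2$. The harder step is $b_1\geq 0$: one needs the pairing $\int_X L^2(3c_2(X)^2-c_4(X))\geq 0$ for ample $L$. This is a Bogomolov--Gieseker type inequality: up to a positive multiple, $3c_2^2-c_4$ is $720\cdot T_4(X)$, and the required positivity can be extracted from the Hitchin--Sawon / Nieper-Wißkirchen formulas for characteristic numbers of hyperkähler manifolds, or alternatively from Miyaoka's inequality for the polystable tangent bundle. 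A supplementary check for any exceptional deformation types in dim $\leq 6$ (such as OG6) can be performed directly from the explicit Riemann--Roch formulas in the literature.

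The main obstacle is the dim $6$ case, specifically the non-negativity of $b_1$, since it requires a positivity statement for the combination $3c_2(X)^2-c_4(X)$ that is strictly stronger than positivity of $c_2$ alone; the lower-dimensional and leading/constant terms are routine. In practice this is either handled by citing the relevant Bogomolov--Miyaoka-type inequality for hyperkähler tangent bundles, or equivalently by reducing to an inspection of the characteristic numbers that are forced on an ISV of dim $\leq 6$ (noting that the Riemann--Roch polynomial depends only on a small number of Chern numbers of $X$, all of which are constrained by the symplectic structure).
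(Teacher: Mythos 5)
Your framework is the right one, and it is essentially the framework behind the paper's own (very short) proof: the paper simply cites Cao--Jiang, whose argument proceeds exactly by expanding $\chi(X,L)=\int_X e^L\operatorname{td}(X)$ with $c_1=c_3=0$ and applying the generalized Fujiki relations to each Todd summand. Your handling of $b_n$ (positivity of the Fujiki constant), of $b_0=n+1$ (Proposition \ref{prop:chi0}), and of the coefficients controlled by $\int_X L^{2n-2}c_2(X)\ge 0$ (polystability of $T_X$ for a Ricci-flat K\"ahler metric plus the Bogomolov-type inequality) is sound, and it disposes of every coefficient in dimension $\le 4$ and of $b_0,b_2,b_3$ in dimension $6$.

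The genuine gap is exactly the step you yourself flag as "the harder step": for $n=3$ you must prove $\int_X L^2\bigl(3c_2(X)^2-c_4(X)\bigr)\ge 0$, and you do not. Miyaoka's inequality concerns the degree-four class $3c_2-c_1^2$ and says nothing about the degree-eight combination $3c_2^2-c_4$; the Hitchin--Sawon and Nieper-Wi{\ss}kirchen identities do enter the known proof, but converting them into the required sign statement is precisely the nontrivial content of Cao--Jiang's dimension-six argument, so "can be extracted from" is an appeal to the result rather than a proof of it. Your fallback of verifying "exceptional deformation types such as OG6 directly" also cannot close the gap: the proposition is asserted for arbitrary irreducible symplectic six-folds, whose deformation types --- and hence the constants $c(c_2^2)$ and $c(c_4)$ in your notation --- are not classified, so an inspection of known examples is not a substitute for a general inequality. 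As written, the argument is complete for $\dim X\le 4$ but incomplete for $\dim X=6$; you should either give an actual derivation of $3\,c(c_2^2)\ge c(c_4)$ or cite \cite{Cao-Jiang} for it, which is what the paper does.
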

\begin{proof}
  For dimension $6$ this is shown in \cite{Cao-Jiang}. In fact the proof of \cite[Corollary 3.4]{Cao-Jiang}
  gives an explicit 
  description for the $b_i$ in this case and shows that they are non-negative.

  The same arguments work for dimension $<6$.

  Finally, let us explain, why $b_n>0$.
  From the proof of \cite[Corollary 23.18]{Gross-Huybrechts-Joyce} one sees that
  $\int_X \alpha^{2n}=(2i)!b_nq(\alpha)^i$ for every $\alpha \in H^2(X,\bZ)$.
  Therefore $b_n$ differs by a positive factor from the Fujiki constant, which is known to be positive.
\end{proof}

\section{Behaviour under deformation}\label{sec:defo}
In this section we discuss the behaviour of divisorial base loci of big and nef line bundle on irreducible symplectic
varieties under deformations.

For a given irreducible symplectic variety $X$, fix a marking
$\eta\colon H^2(X,\bZ) \overset{\iso}{\too} \Lambda$ for some lattice $\Lambda$.
Denote by $\cM_\Lambda^X$ the moduli space of marked irreducible symplectic varieties deformation equivalent
to $(X,\eta)$.
Pick an element $h\in\Lambda$, such that $H\coloneqq \eta^{-1}(h)$ is a big and nef line bundle on $X$.
Denote by $\cM_{\Lambda,h}^X$ the moduli space of semi-polarized marked irreducible symplectic varieties
deformation equivalent to $(X,\eta)$, i.e.\,the subspace of the moduli space $\cM_\Lambda^X$, where the class
$h$ still corresponds to a  big and nef line bundle. 
To fix notation, $t\in \cM_{\Lambda,h}^X$ parametrizes an irreducible symplectic variety $X_t$ with marking
$\eta_t\colon H^2(X_t,\bZ)\iso\Lambda$ such that $H_t\coloneqq \eta_t^{-1}(h)$ is a big and nef line bundle.

\begin{theorem}\label{thm:defoofBD}
  Let $X$ be an irreducible symplectic variety such that its deformation type satisfies the conditions of the
  main theorem (Theorem \ref{thm:main-thm}).
  Fix a marking $\eta$ and a class $h\in \Lambda$ as above.
  Then the locus in the semi-polarized moduli space $\cM_{\Lambda,h}^X$, where the semi-polarization has a non-trivial
  base divisor, is a union of disjoint Noether--Lefschetz divisors (the Noether--Lefschetz divisors corresponding to the
  possible base divisors).
\end{theorem}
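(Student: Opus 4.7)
The plan is to apply the main theorem (Theorem \ref{thm:main-thm}) pointwise on $\cM_{\Lambda,h}^X$ in order to translate the condition ``$H_t$ has a non-trivial base divisor'' into a lattice-theoretic plus Hodge-theoretic condition on a class in $\Lambda$, and then to identify the resulting locus with a union of Noether--Lefschetz divisors, one for each possible base-divisor class. The numerical condition $RR_X(q(h))=\binom{m+n}{n}$ in the main theorem depends only on $h$ and on the deformation type of $X$, so by the assumed strict monotonicity of $\restr{RR_X}{\bZ_{\geq 0}}$ it determines at most one integer $m\geq 2$; if none exists, the base-divisor locus is empty and there is nothing to prove.

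Assuming such an $m$, I would define $\cS_h\subseteq\Lambda$ to be the set of classes $f$ satisfying (i) $h-f\in m\Lambda$ with $\ell:=(h-f)/m$ primitive, (ii) $q(\ell)=0$, (iii) $q(f)<0$, and (iv) $f$ is the class of a prime exceptional divisor on some deformation of $(X,\eta)$. For each $f\in\cS_h$ set
\begin{equation*}
\cN_f := \{t\in\cM_{\Lambda,h}^X \mid f\in\eta_t(\Pic(X_t))\}.
\end{equation*}
Since $q(h)>0$ and $q(f)<0$, the classes $h$ and $f$ are linearly independent in $\Lambda$, so $\cN_f$ is a bona fide Noether--Lefschetz divisor. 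The theorem then amounts to showing that the base-divisor locus equals $\bigsqcup_{f\in\cS_h}\cN_f$.

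The inclusion ``$\subseteq$'' is immediate from the main theorem: given $t$ in the base-divisor locus, the class $f:=\eta_t(F_t)$ lies in $\cS_h$ and $t\in\cN_f$. For the reverse inclusion, given $f\in\cS_h$ and $t\in\cN_f$, the plan is to invoke Markman's deformation theory of prime exceptional divisors (in the spirit of Proposition \ref{prop:RisMonOP}) to ensure that $\eta_t^{-1}(f)$ is actually a prime exceptional divisor on $X_t$, and then Conjecture \ref{conj:strongRLF} (which holds for the deformation type by assumption) to conclude that the primitive class $\ell$ with $q(\ell)=0$ is movable on $X_t$ and induces a rational Lagrangian fibration, yielding $h^0(X_t,m\ell)=\binom{m+n}{n}$. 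The required membership $\ell\in\birKbar_{X_t}$ should be checked via Theorem \ref{thm:descrofbirKbar} using $(\ell,\eta_t^{-1}(f))_q>0$ together with Lemma \ref{lem:ineqforPED}, exactly as in the proof of Theorem \ref{thm:main-thm}. The main theorem applied to $X_t$ then gives that $H_t$ has base divisor $\eta_t^{-1}(f)$. Disjointness of the union is then automatic: at any $t$ in the base-divisor locus, the fixed divisor $F_t$ and hence its class in $\Lambda$ is uniquely determined by $H_t$, so $t\in\cN_f\cap\cN_{f'}$ would force $\eta_t^{-1}(f)=\eta_t^{-1}(f')$ as the fixed divisor of $H_t$, i.e.\ $f=f'$.

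The main obstacle is the propagation step in the reverse inclusion: verifying that at \emph{every} point $t\in\cN_f$ (and not merely on a dense open subset), $\eta_t^{-1}(f)$ remains the class of an effective prime exceptional divisor and $\ell$ remains movable, despite the potential appearance of additional Hodge classes on higher Noether--Lefschetz loci which could a priori produce new wall divisors destroying the description of $\birKbar_{X_t}$. Controlling this requires a careful combination of Markman's monodromy-invariance of reflective / prime-exceptional classes (and their deformation along Noether--Lefschetz divisors) with Theorem \ref{thm:descrofbirKbar} and Conjecture \ref{conj:strongRLF} applied on each $X_t$.
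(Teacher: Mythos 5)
Your setup — reducing to a single $m$ via strict monotonicity of $RR_X$, defining one Noether--Lefschetz divisor per candidate base-divisor class, getting the inclusion ``$\subseteq$'' from Theorem \ref{thm:main-thm}, and deducing disjointness from uniqueness of the fixed part — matches the paper's strategy. But the reverse inclusion is where the actual work lies, and you explicitly leave it open: you name the propagation problem (extra Hodge classes at special points of $\cN_f$ could create new uniruled/wall divisors $D$ with $(\ell,D)_q<0$, destroying $\ell\in\birKbar_{X_t}$) without supplying the idea that resolves it. Note also that your suggestion to verify $\ell\in\birKbar_{X_t}$ ``exactly as in the proof of Theorem \ref{thm:main-thm}'' does not work: in that proof the membership check via Theorem \ref{thm:descrofbirKbar} is performed for $H'=mL+F_{i_0}$, not for $L$, and $L$ is movable there by construction (it is the primitive class under the movable part of $H$), which is precisely what you cannot assume at an arbitrary $t\in\cN_f$.

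The paper closes this gap by a semicontinuity argument rather than a pointwise one. Working locally over a small $T$ with a universal family, Kodaira vanishing and base change make $t\mapsto h^0(X_t,H_t)$ constant, equal to $\binom{m+n}{n}$, and also show the base-divisor locus is closed (so a local argument suffices). Along the Noether--Lefschetz locus $T'$ of $F_{t_0}$, Markman's deformation result for prime exceptional divisors (\cite[Proposition 5.2]{MarkmanPED}, not Proposition \ref{prop:RisMonOP}) keeps $F_t$ effective at \emph{every} $t\in T'$. The key point is that $L_t\in\birKbar_{X_t}$ and the Lagrangian-fibration conclusion of Conjecture \ref{conj:strongRLF} are only established for \emph{general} $t\in T'$ (those with no additional uniruled divisor in $\Pic(X_t)$); this gives $h^0(X_t,mL_t)\geq\binom{m+n}{n}$ generically, and upper semicontinuity of $h^0$ extends this lower bound to all of $T'$. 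The sandwich $\binom{m+n}{n}=h^0(X_t,H_t)\geq h^0(X_t,mL_t)\geq\binom{m+n}{n}$ then forces $h^0(X_t,H_t)=h^0(X_t,mL_t)$ at every $t\in T'$, so $F_t$ lies in the base locus and Theorem \ref{thm:main-thm} identifies it as the full base divisor. You should incorporate this semicontinuity step (and the correct reference to Markman) to complete your argument; as written, the reverse inclusion is not proved.
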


\begin{proof}
  First note that the condition that $RR_{X_t}$ is strictly monotonic ensures that for each $t\in
  \cM_{\Lambda, h}^X$ the
  semi-polarization $H_t$ is effective (since $H_t$ is big and nef, it satisfies Kodaira vanishing and thus
   $h^0(X_t, H_t)=\chi(X_t, H_t)=RR_{X_t}(q(H_t))>0$ by assumption). Thus it makes sense to study
  the base divisor.

  Consider a small open subset $T\subseteq \cM_{\Lambda,h}^X$ such that there exists a universal family
  $\pi\colon \cX \to T$ with semi-polarization $\cH\in \Pic(\cX)$ (this is possible by
  \cite[1.12]{Huybrechts:HK:basic-results}).
  Let $X_t\coloneqq \pi^{-1}t$ and $H_t\coloneqq \cH|_{X_t}$.
  Then again by Kodaira vanishing $H^k(X_t, H_t)=0$ for all $t\in T$ and $k>0$.
  Therefore, by base change  $\pi_*(\cH)$ 
  is a vector bundle with $H^0(X_t,H_t)=\pi_*\cH|_t$.  Thus after shrinking $T$, one can assume that all
  global sections of $H_t$ are restrictions of global 
  sections of $\cH$.
  This implies that the base locus of $\cH$, which is a closed
  set, is equal to the union of the base loci of the
  $H_t$.
  In particular, the locus in $T$ where  $H_t$ has a base divisor is closed.
  It thus suffices to prove the statement locally in $\cM_{\Lambda,h}^X$, and we can assume that 
  there exists a universal family.

  If there exists $t_0$ such that $H_{t_0}$ has non-trivial divisorial base locus, then
  one can apply the main theorem (Theorem \ref{thm:main-thm}). One sees that $H_{t_0}$ is of the form
  $H_{t_0}=mL_{t_0} + F_{t_0}$, where $m\geq 2$, $L_{t_0}$ is a
  primitive movable line bundle with $q(L_{t_0})=0$ and  $(L_{t_0},F_{t_0})_q>0$, such that
  $h^0(X_{t_0}, H_{t_0})=RR_{X_{t_0}}(q(H_{t_0}))=\binom{m+n}{n}$, $F_{t_0}$ is a prime exceptional divisor with
  $q(F_{t_0})<0$.

  Consider $T'\subset T$ the Noether--Lefschetz locus, where the
  class of $F_{t_0}$ stays $(1,1)$, and thus there is a deformation $\cF\in \Pic(\cX_{T'})$. We will show,
  that $H_t$ has non-trivial base divisor for all $t\in T'$, and that the base divisor is exactly
  $F_t\coloneqq \cF|_{X_t}$.

  As observed above, Kodaira vanishing implies that $h^0(X_t, H_t)$ is constant and thus
  \begin{equation}\label{eq:h0isconstant}
    h^0(X_t, H_t)=h^0(X_{t_0}, H_{t_0})=\binom{m+n}{n}.
  \end{equation}

  Since $F_{t_0}$ is a prime exceptional divisor, it stays effective under deformation by \cite[Proposition 5.2]{MarkmanPED},
  i.e.~for every $t\in T'$ the divisor $F_t$ is effective.

Set $L_t\coloneqq \frac{1}{m}(H_t-F_t)$, which is a primitive, integral class. Since both $H_t$ and $F_t$ are
of $(1,1)$ type along $T'$, the same holds for $L_t$.
\begin{claim}\label{claim:h0L-t}
  For every $t\in T'$ the line bundle  $L_t$ is effective with
  $h^0(X_t, mL_t)\geq h^0(\bP^n, \dO_{\bP^n}(m))=\binom {m+n}{n}$ for all $m$. 
\end{claim}
\begin{proofofclaim}
  Since $L_{t_0}\in {\birKbar}_{X_{t_0}}$, also for very general $t\in T'$ the line bundle
   $L_t\in {\birKbar}_{X_t}$
  (use Theorem \ref{thm:descrofbirKbar} to see this is satisfied for every
  $t$ which does not admit an additional uniruled divisor in $\Pic(X_t)$).   Further, note that $q(L_t)=q(L_{t_0})=0$.
  Consequently, for general $t\in T'$ Conjecture \ref{conj:strongRLF} implies that $L_t$ induces a rational Lagrangian 
  fibration to $\bP^n$, and thus $h^0(X_t, mL_t)\geq h^0(\bP^n, \dO_{\bP^n}(m))=\binom {m+n}{n}$.
  The claim follows by semicontinuity.
\end{proofofclaim}

To conclude the proof of Theorem \ref{thm:defoofBD} use that by construction $H_t=mL_t +F_t$ for every $t\in T'$,
where we saw above that all involved line bundles are effective. We therefore have
\begin{equation*}
  \binom{m+n}{n}\overset{\eqref{eq:h0isconstant}}{=}
  h^0(X_t, H_t)
  \geq h^0(X_t, mL_t)
  \overset{\ref{claim:h0L-t}}{\geq}\binom{m+n}{n}.
\end{equation*}
Thus all involved inequalities must be equalities, in particular $h^0(X_t,H_t)=h^0(X_t,mL_t)$, which
implies that $F_t$ is contained in the base locus of $H_t$ and then Theorem \ref{thm:main-thm} implies
that the base divisor of $H_t$ is irreducible and reduced, and therefore coincides with $F_t$.
\end{proof}

\TODO{be really sure, that NL-loci usually only refer to staying (1,1).}

    \TODO{rauskriegen, ob es eine standardisierte Schreibweise für Noether--Lefschetz gibt - statt $T_F$.}

\section{Base divisors for K3$^{[n]}$-type} 
\label{sec:basedivforK3n}

In this section we improve the main theorem (Theorem \ref{thm:main-thm}) for irreducible
symplectic varieties of  K3$^{[n]}$-type. We need the following Riemann--Roch formula.

\begin{proposition}[{Riemann--Roch for K3$^{[n]}$-type, \cite[Example 23.19]{Gross-Huybrechts-Joyce}}]\label{prop:EGL}
    Let $X$ be of K3$^{[n]}$-type, and $L \in \Pic(X)$ be a line bundle. Then
  \begin{equation*}
    \chi(X,L)=\bigg(\twolines{\frac{1}{2}q(L)+n+1}{n}\bigg).
  \end{equation*}
\end{proposition}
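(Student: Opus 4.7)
My plan is as follows. The polynomial $RR_X$ depends only on the deformation type of $X$, so it suffices to establish the formula for the distinguished representative $X = \Hilb^n(S)$ where $S$ is a (projective) K3 surface. The strategy is to compute $\chi(\Hilb^n(S), L)$ explicitly for enough line bundles $L$ to determine a polynomial of degree $n$ in $q(L)$.

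More concretely, I would use the natural map $\mu_n\colon \Pic(S) \to \Pic(\Hilb^n(S))$, where for $M \in \Pic(S)$ the line bundle $M_n \coloneqq \mu_n(M)$ arises by descending $M^{\boxtimes n}$ from $S^n$ to $S^{(n)}$ and pulling back along the Hilbert--Chow morphism $\pi\colon \Hilb^n(S) \to S^{(n)}$. Two standard facts are needed: first, Beauville's computation shows $q(M_n) = M \cdot M$; second, since $\pi$ is crepant with $R\pi_*\dO_{\Hilb^n(S)} = \dO_{S^{(n)}}$ and $M_n = \pi^*(M^{\boxtimes n})^{\mathfrak{S}_n}$, the Leray spectral sequence combined with the identification of equivariant cohomology gives
\begin{equation*}
  H^*(\Hilb^n(S), M_n) \iso \Sym^n H^*(S,M)
\end{equation*}
as graded vector spaces. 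Taking Euler characteristics and using that $\chi(\Sym^n V) = \binom{\chi(V)+n-1}{n}$ for any graded virtual vector space $V$, one obtains
\begin{equation*}
  \chi(\Hilb^n(S), M_n) = \binom{\chi(S,M) + n - 1}{n}.
\end{equation*}

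Since $S$ is a K3 surface, Riemann--Roch gives $\chi(S,M) = \frac{1}{2}M^2 + 2$, so substituting yields $\chi(\Hilb^n(S), M_n) = \binom{\frac{1}{2} q(M_n) + n + 1}{n}$. As $M$ ranges over $\Pic(S)$, the value $q(M_n) = M^2$ takes infinitely many (even) integer values, and both $\chi(X, L)$ (by Theorem \ref{thm:RRISV}) and $\binom{\frac{1}{2}x + n + 1}{n}$ are polynomials of degree $n$ in $x = q(L)$. Two polynomials of degree $n$ that coincide on infinitely many points are equal, so the formula holds for all $L \in \Pic(X)$.

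The main obstacle is the justification of the identification $\chi(\Hilb^n(S), M_n) = \binom{\chi(S,M)+n-1}{n}$, which relies on the crepant resolution property of the Hilbert--Chow morphism together with the correct behavior of symmetric functors on direct images; this is classical (going back to Fogarty and worked out in detail by Göttsche, Ellingsrud--Göttsche--Lehn), so I would just invoke it rather than reprove it. Everything else is a degree count plus polynomial interpolation.
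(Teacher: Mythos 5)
The paper gives no proof of this proposition beyond the citation to \cite[Example 23.19]{Gross-Huybrechts-Joyce}, and your argument is essentially the one carried out there: reduce to $\Hilb^n(S)$ by deformation invariance of $RR_X$, establish $\chi(\Hilb^n(S),M_n)=\binom{\chi(S,M)+n-1}{n}$ for the line bundles induced from $S$ via the Hilbert--Chow morphism, insert Riemann--Roch on the K3 surface, and conclude by polynomial interpolation in $q$. Your proof is correct; the only point requiring a little care is that the $\mathfrak{S}_n$-invariants of $H^*(S,M)^{\otimes n}$ form the symmetric power in the graded (super) sense, which is precisely what makes the identity $\chi(\Sym^n V)=\binom{\chi(V)+n-1}{n}$ for virtual graded vector spaces come out right (alternatively, restricting to ample $M$, where higher cohomology vanishes, avoids the issue entirely and still supplies infinitely many interpolation points).
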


With this in mind, we find the following characterization for base divisors of big and nef line bundles on
irreducible symplectic varieties of K3$^{[n]}$-type.

\begin{proposition}\label{prop:basecomponentsK3n-type}
  Let $X$ be an irreducible symplectic variety of K3$^{[n]}$-type and $H\in \Pic(X)$ a line bundle that
  is big and nef.  
  Then  $H$ has a fixed divisor if and only if $H$ is of the form
  $H=mL+F$, where $m\geq 2$, $L$ is movable with $q(L)=0$, and $F$ is an irreducible reduced divisor of
  negative square with $(L,F)_q=1$. In this case $F$ is the fixed divisor of $H$.
\end{proposition}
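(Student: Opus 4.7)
The plan is to invoke Theorem \ref{thm:main-thm} and then use the explicit Riemann--Roch formula of Proposition \ref{prop:EGL} together with Markman's inequality (Lemma \ref{lem:ineqforPED}) to refine the bound $(L,F)_q > 0$ to the equality $(L,F)_q = 1$. First I would verify that Theorem \ref{thm:main-thm} applies to $X$: Conjecture \ref{conj:strongRLF} holds for K3$^{[n]}$-type by Theorem \ref{thm:Matsushita}, and the strict monotonicity of $\restr{RR_X}{\bZ_{\geq 0}}$ is immediate from Proposition \ref{prop:EGL}, since $k \mapsto \binom{k+n+1}{n}$ is strictly increasing on $\bZ_{\geq 0}$.

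For the forward direction, Theorem \ref{thm:main-thm} gives a decomposition $H = mL + F$ with $m \geq 2$, $L$ primitive movable with $q(L) = 0$, $F$ irreducible reduced with $q(F) < 0$ and $(L,F)_q > 0$, and the numerical identity $RR_X(q(H)) = \binom{m+n}{n}$. Applying Proposition \ref{prop:EGL} and the injectivity noted above, this identity translates into $q(H) = 2(m-1)$. Expanding $q(H) = q(mL+F) = 2m(L,F)_q + q(F)$ yields
\[ 2m(L,F)_q + q(F) = 2(m-1). \]
Lemma \ref{lem:ineqforPED} gives $q(F) \geq -2\div(F)$, and since $\div(F)$ divides the positive integer $(L,F)_q$, we have $\div(F) \leq (L,F)_q$, so $q(F) \geq -2(L,F)_q$. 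Substituting into the displayed equation yields $2(m-1)(L,F)_q \leq 2(m-1)$, and since $m \geq 2$ this forces $(L,F)_q \leq 1$. Combined with positivity and integrality, $(L,F)_q = 1$.

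For the converse, I would reverse the above computation to verify the numerical hypothesis of Theorem \ref{thm:main-thm}. Divisibility gives $\div(F) = 1$, so Lemma \ref{lem:ineqforPED} forces $-q(F) \leq 2$; since the Beauville--Bogomolov lattice of a K3$^{[n]}$-type variety is even, we get $q(F) = -2$ and hence $q(H) = 2m - 2$. Proposition \ref{prop:EGL} then yields $RR_X(q(H)) = \binom{m+n}{n}$, and Theorem \ref{thm:main-thm} identifies $F$ as the fixed divisor of $H$.

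No substantial obstacle arises beyond these arithmetic manipulations; the whole argument is a numerical refinement of Theorem \ref{thm:main-thm} exploiting the very rigid binomial shape of Riemann--Roch in the K3$^{[n]}$-case, which combined with Markman's divisibility bound pins $(L,F)_q$ down exactly.
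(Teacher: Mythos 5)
Your proof is correct and follows essentially the same route as the paper: apply Theorem \ref{thm:main-thm}, then combine Proposition \ref{prop:EGL}, Lemma \ref{lem:ineqforPED}, the divisibility $\div(F)\mid (L,F)_q$, and evenness of the lattice to pin down $(L,F)_q=1$. The only (cosmetic) difference is that you use injectivity of $RR_X|_{\bZ_{\geq 0}}$ to reduce at once to the linear equation $q(H)=2(m-1)$, whereas the paper reaches the same conclusion by chaining inequalities of binomial coefficients and identifying the equality case.
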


\begin{proof}
  Let $H\in \Pic(X)$ be a big and nef line bundle on $X$. Recall that Theorem \ref{thm:main-thm} can be
  applied for K3$^{[n]}$-type by Section \ref{sec:discuss-conditions}. This shows that $H$
  has non-trivial base divisor if and only if $H=mL+F$ where $m\geq 2$, $L$ is movable with $q(L)=0$, and $F$
  is an irreducible reduced divisor of negative square, and $(L,F)_q>0$, such that $h^0(X, H)=\binom {m+n}{n}$. 
  Therefore, we only need to observe that the additional condition $(L,F)_q=1$ is equivalent to
  $h^0(X, H)=\binom {m+n}{n}$ .

  Since $H$ is big and nef by assumption, Kodaira vanishing and the Riemann--Roch for
  K3$^{[n]}$-type (see Proposition \ref{prop:EGL}) imply that 
  \begin{equation*}
    h^0(X,H)= \chi(X,H)=\binom{\frac{1}{2}q(H)+n+1}{n}.
  \end{equation*}
  In this case $H=mL+F$ shows that 
  \begin{equation}\label{eq:qofH}
    q(H)=q(mL + F) = 2m  (L,F)_q + q(F)=  2(m-1) (L,F)_q + \underbrace{2(L,F)_q+ q(F)}_{\geq 0} ,
  \end{equation}
where the inequality of the last term follows from Lemma \ref{lem:ineqforPED} since $(L,F)_q>0$.

Therefore 
\begin{align}
   h^0(X,H)&=\binom{\frac{1}{2}q(H)+n+1}{n}
\geq \binom{(m-1) (L,F)_q +n+1}{n} \notag \\
&\overset{(*)}\geq \binom{(m-1) +n+1}{n}
= \binom {m+n}{n}, \label{eq:inequforh0H}
\end{align}
where equality in  $(*)$,  holds if only if $(L,F)_q=1$.

\bigskip

  To see the equivalence $h^0(X,H)=\binom{m+n}{n}$ if and only if $(L,F)_q=1$, first suppose 
  $ h^0(X,H)= \binom{m+n}{n}$.
  In this case equality holds in all places in \eqref{eq:inequforh0H}.
In particular, there is equality in $(*)$, whence $(L,F)_q=1$.

For the other implication, suppose that $(L,F)_q=1$.
Since $q(F)<0$,  Lemma \ref{lem:BBforeffective} implies that $F$ is a fixed divisor,
 $(L,F)_q=1$ ensures that $\div(F)=1$.
Note that the K3$^{[n]}$-lattice $\LambdaKEn$ is an even lattice. Therefore use  Lemma \ref{lem:ineqforPED}
to see that $-q(F)\leq 2\div(F)=2$, and thus $q(F)=-2$.
In particular, $2(L,F)_q+q(L)=2\cdot 1 -2=0$, which shows that both inequalities in
\eqref{eq:inequforh0H} are equalities in this case.
Consequently, $h^0(X,H)= \binom{m+n}{n}$.
\end{proof}

\section{Base divisors for Kum$^n$-type}\label{sec:basedivforKumn}

Similar as in the previous section, we need the following
\begin{proposition}[{Riemann--Roch for Kum$^n$-type, \cite[Example 23.20]{Gross-Huybrechts-Joyce}}]\label{prop:RRKum}
    Let $X$ be an irreducible symplectic variety of Kum$^n$-type, and $L \in \Pic(X)$ be a line bundle. Then
  \begin{equation*}
    \chi(X,L)=(n+1) \bigg(\twolines{\frac{1}{2}q(L)+n}{n}\bigg).
  \end{equation*}
\end{proposition}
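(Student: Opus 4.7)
By Theorem \ref{thm:RRISV}, $\chi(X,L)$ is a polynomial of degree $\leq n$ in $q(L)$, whose coefficients depend only on the deformation type of $X$. Both sides of the asserted formula are polynomials of degree $n$ in $q(L)$, so it suffices to verify the equality at $n+1$ distinct values of $q(L)$ on a single Kum$^n$-type model, which I would take to be $X = \mathrm{Kum}^n(A) \subset A^{[n+1]}$ for a sufficiently general abelian surface $A$.

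One value is free: at $L = \mathcal{O}_X$, both sides equal $n+1$ (the left-hand side by Proposition \ref{prop:chi0}, the right-hand side by direct substitution). For the remaining values, I would exploit the natural construction that assigns to a line bundle $L_A \in \Pic(A)$ a line bundle $L \in \Pic(X)$: descend $L_A^{\boxtimes(n+1)}$ from $A^{n+1}$ to the symmetric product $A^{(n+1)}$, pull back along the Hilbert--Chow morphism $A^{[n+1]} \to A^{(n+1)}$, and restrict along the summation map $\sigma \colon A^{[n+1]} \to A$ to its fibre $X = \sigma^{-1}(0)$. The comparison of the Beauville--Bogomolov--Fujiki form with the intersection pairing on $A$ gives $q(L) = (L_A)^2$ up to a fixed normalization, so letting $L_A$ run through polarizations of different degree produces arbitrarily many values of $q(L)$.

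For each such $L$, I would compute $\chi(X,L)$ by transferring the computation first to $A^{[n+1]}$, via an Ellingsrud--G\"ottsche--Lehn type formula expressing $\chi(A^{[n+1]}, L_A^{[n+1]})$ in terms of $\chi(A, L_A)$, and then to the fibre $X = \sigma^{-1}(0)$ using the isotriviality of $\sigma$ under the translation action of $A$. Extracting the contribution of the single fibre from the total global sections on $A^{[n+1]}$ produces the factor $n+1$, and combining with Riemann--Roch on the abelian surface $\chi(A, L_A) = \tfrac12 (L_A)^2$ yields exactly $(n+1)\binom{\tfrac12 q(L)+n}{n}$ on this family of line bundles. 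The polynomial identity then follows in general.

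The main obstacle is the push-forward step along $\sigma$: one has to cleanly isolate the contribution of $X = \sigma^{-1}(0)$ from the action of translations by $A$ on global sections of $L_A^{[n+1]}$, which requires some bookkeeping with the representation theory of the torsion group $A[n+1]$. A more computational alternative that sidesteps this is to apply Hirzebruch--Riemann--Roch directly on $X$ using the explicit Chern numbers of Kum$^n$-type, as worked out for instance in \cite{Gross-Huybrechts-Joyce}; combined with the Fujiki-type relations expressing $\int_X c_{2k}(X)\,\alpha^{2n-2k}$ as a multiple of $q(\alpha)^{n-k}$, this determines the coefficients of $RR_X$ directly.
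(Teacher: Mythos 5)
The paper offers no proof of this proposition at all: it is quoted verbatim from \cite[Example 23.20]{Gross-Huybrechts-Joyce}, so there is nothing internal to compare your argument against. Your outline is essentially the standard argument behind that citation and is sound: reduce to a polynomial identity via Theorem \ref{thm:RRISV}, evaluate on line bundles induced from an abelian surface $A$ (for which $q(L)=(L_A)^2$ on the nose, not just up to normalization), and compare with the Ellingsrud--G\"ottsche--Lehn computation $\chi\bigl(A^{[n+1]},L_A^{[n+1]}\bigr)=\binom{\frac{1}{2}(L_A)^2+n}{n+1}$. The one place where your plan is heavier than it needs to be is the step you flag as the main obstacle: you do not need to isolate the fibre's share of the global sections or any representation theory of $A[n+1]$. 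Work with Euler characteristics throughout and use the \'etale cover $\nu\colon A\times\mathrm{Kum}^n(A)\to A^{[n+1]}$, $(a,\xi)\mapsto \xi+a$, of degree $(n+1)^{4}$: by the theorem of the square $\nu^{*}L_A^{[n+1]}\cong L_A^{\otimes(n+1)}\boxtimes L$, so multiplicativity of $\chi$ under \'etale covers and K\"unneth give $(n+1)^{4}\binom{\frac{1}{2}(L_A)^2+n}{n+1}=\frac{(n+1)^{2}(L_A)^2}{2}\cdot\chi(X,L)$, which simplifies to exactly $(n+1)\binom{\frac{1}{2}q(L)+n}{n}$.
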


This can be used to exclude divisorial base components for big and nef line bundles on varieties of Kum$^n$-type:

\begin{proposition}\label{prop:basecomponentsKumn-type}
  Let $X$ be an irreducible symplectic variety of Kum$^n$-type, and $H$ a big and nef line bundle. Then $H$ is movable, i.e.~it does not have a fixed divisor.
\end{proposition}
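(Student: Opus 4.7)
The strategy is to argue by contradiction, assuming that the big and nef line bundle $H$ has a fixed divisor, and then deriving numerical constraints via the main theorem and the Riemann--Roch formula for Kum$^n$-type that are incompatible. Since Kum$^n$-type satisfies the hypotheses of Theorem \ref{thm:main-thm} (see Section \ref{sec:discuss-conditions} together with Theorem \ref{thm:Matsushita}), the existence of a fixed divisor would force the decomposition $H = mL + F$ with $m \geq 2$, $L$ primitive movable with $q(L) = 0$, $F$ irreducible reduced with $q(F) < 0$, $(L,F)_q > 0$, and the key numerical identity $h^0(X,H) = \binom{m+n}{n}$. So the plan is to confront this identity with the value computed from Proposition \ref{prop:RRKum}.

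Concretely, I would first invoke Kodaira vanishing together with Proposition \ref{prop:RRKum} to rewrite
\begin{equation*}
h^0(X,H) = \chi(X,H) = (n+1)\binom{\tfrac{1}{2}q(H) + n}{n}.
\end{equation*}
Next I would estimate $q(H)$ from below: expanding $q(H) = 2m(L,F)_q + q(F)$ and using Lemma \ref{lem:ineqforPED} (which gives $-q(F) \leq 2\div(F) \leq 2(L,F)_q$) together with the fact that $(L,F)_q$ is a positive integer (so $(L,F)_q \geq 1$), one obtains
\begin{equation*}
\tfrac{1}{2}q(H) \geq (m-1)(L,F)_q \geq m-1.
\end{equation*}
Consequently
\begin{equation*}
h^0(X,H) = (n+1)\binom{\tfrac{1}{2}q(H) + n}{n} \geq (n+1)\binom{m-1+n}{n}.
\end{equation*}

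Combining this lower bound with the equality $h^0(X,H) = \binom{m+n}{n}$ from the main theorem yields
\begin{equation*}
(n+1)\binom{m-1+n}{n} \leq \binom{m+n}{n} = \frac{m+n}{m}\binom{m-1+n}{n},
\end{equation*}
so $(n+1) \leq 1 + n/m$, which simplifies to $m \leq 1$. This contradicts $m \geq 2$ and so rules out any fixed divisor.

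The argument is short and essentially combinatorial; the main (very mild) obstacle is simply bookkeeping with the integrality/divisibility input that controls $q(F)$ from below in terms of $(L,F)_q$. Conceptually the mechanism is clear: the extra factor $(n+1)$ in the Riemann--Roch polynomial for Kum$^n$-type (compared with the K3$^{[n]}$ case) makes the Euler characteristic strictly too large to match $\binom{m+n}{n}$, regardless of how one chooses $m$, $L$ and $F$, so the scenario described in Theorem \ref{thm:main-thm} simply cannot occur.
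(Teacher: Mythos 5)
Your proof is correct and follows the same route as the paper: reduce to the decomposition $H=mL+F$ via Theorem \ref{thm:main-thm}, compute $h^0(X,H)$ with Kodaira vanishing and Proposition \ref{prop:RRKum}, bound $q(H)$ from below using Lemma \ref{lem:ineqforPED}, and derive $m\leq 1$. The only difference is cosmetic: the paper splits into the cases $(L,F)_q=1$ and $(L,F)_q\geq 2$, whereas your single estimate $\tfrac{1}{2}q(H)\geq (m-1)(L,F)_q\geq m-1$ handles both at once, which is a slightly cleaner way to organize the same argument.
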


\begin{proof}
  Suppose for contradiction that $H$ has non-trivial base divisor. Note that Theorem \ref{thm:main-thm} can be
  applied to Kum$^n$-type by Section \ref{sec:discuss-conditions}.
  Thus  there exists an
  irreducible reduced divisor $F$ 
  of negative square such that $H$ is of the form $H=mL+F$, where $m\geq 2$, $L$ is a
  primitive movable line bundle with $q(L)=0$ and  $(L,F)_q>0$, such that $RR_X(q(H))=\binom{m+n}{n}$.
  We only need to see that this cannot happen for $X$ of Kum$^n$-type.

  As in \eqref{eq:qofH}
    \begin{equation*}
    q(H) = 2(m-1) (L,F)_q + \underbrace{2(L,F)_q+ q(F)}_{\geq 0}.
    \end{equation*}
    We will use this to show, that  there are no positive integral values for $m$ and $n$ such that
    \begin{equation}\label{eq:contr}
      RR_X(q(H))=\binom{m+n}{n}=\frac{(m+n)!}{n!\cdot m!}.
    \end{equation}
    Distinguish between two cases:

\bigskip

    {\it Case 1: If $(L,F)_q=1$:}
    Since the Kum$^n$-lattice is even and $q(F)<0$, in this case
    $0\leq 2(L,F)_q+ q(F)=2 + q(F)\leq 0$, and thus in fact there is equality everywhere, $q(F)=-2$ and
    \begin{equation}\label{eq:qofHcase1}
      q(H)=2(m-1)(L,F)_q=2(m-1).
    \end{equation}
    With this in mind use Riemann--Roch for Kum$^n$-type (Proposition
    \ref{prop:RRKum}) to see that the left hand side of \eqref{eq:contr} is
    \begin{align*}
      RR_X(q(H))&=\chi(X,H)=(n+1) \binom{\frac{1}{2}q(H)+n}{n}
      \overset{\eqref{eq:qofHcase1}}{=} (n+1) \binom{m-1+n}{n}\\
      &=(n+1)\frac{(m+n-1)!}{n!\cdot(m-1)!}
    \end{align*}
    Therefore, \eqref{eq:contr} becomes
    \begin{align*}
      &(n+1)\frac{(m+n-1)!}{n!\cdot(m-1)!} = \frac{(m+n)!}{n!\cdot m!}\\
      \iff \hspace{2em} & (n+1)\cdot 1 = \frac{(m+n)}{ m}\\
      \iff \hspace{2em} & nm + m = m+n\\
      \iff \hspace{2em} & 1=m .
    \end{align*}
    which is a contradiction to $m\geq 2$, which would hold if $H$ were a big and nef line bundle with a fixed
    divisor. 
     
    \bigskip

    {\it Case 2: If $(L,F)_q\geq 2$:}
    Again, we want to verify, that there are no positive integral solutions $m,n$ of \eqref{eq:contr}.
    Note that in this case
    \begin{align*}
      q(H)= 2(m-1) (L,F)_q + \underbrace{2(L,F)_q+ q(F)}_{\geq 0}
      \geq 4(m-1).
    \end{align*}
    In addition,  $m\geq 2$ immediately implies that $2(m-1)\geq m$.
    Since $RR_X$ is monotonic for Kum$^n$-type this implies that
    \begin{align*}
      RR_X(q(H))&=n\binom{\frac{1}{2}q(H)+n-1}{n-1}
      \geq (n+1)\binom{2(m-1)+n}{n}
      \geq(n+1)\binom{m+n}{n}\\
      &> \binom{m+n}{n}.
    \end{align*}
    This is the desired contradiction to the equality in \eqref{eq:contr}.
\end{proof}
   \TODO{Case 1 zeigt wahrscheinlich, dass es gar prime exceptional divisors mit $\div=1$ geben
     kann... Überlegen, ob ich das ausarbeite und als Kommentar hier einfüge.}

\bibliographystyle{alpha}
\bibliography{\folder Literatur}

\end{document}